\DeclareFontFamily{U}{rcjhbltx}{}
\DeclareFontShape{U}{rcjhbltx}{m}{n}{<->rcjhbltx}{}
\DeclareSymbolFont{hebrewletters}{U}{rcjhbltx}{m}{n}
\DeclareMathSymbol{\kaf}{\mathord}{hebrewletters}{107}
\newtheorem{theorem}{Theorem}[section]
\newtheorem{lemma}[theorem]{Lemma}
\newtheorem{proposition}[theorem]{Proposition}
\newtheorem{corollary}[theorem]{Corollary}
\theoremstyle{definition}
\newtheorem{example}[theorem]{Example}
\newtheorem{definition}[theorem]{Definition}
\newtheorem{remark}[theorem]{Remark}
\title{Derived control theorems for reductive groups}
\author{by Rob Rockwood}
\date{}
\begin{document}

\maketitle

\begin{abstract}
    We prove Hida-style control theorems in the derived setting for a large class of reductive groups tailored for applications to Euler systems. 
\end{abstract}
\section{Introduction}
Fix a prime $p$. Let $\mathcal{G}$ be a connected reductive algebraic group over $\mathbb{Q}$ unramified over $\mathbb{Q}_{p}$ with Borel subgroup $B_{G}$, splitting field $K/\mathbb{Q}_{p}$ and reductive model $G$ over $\mathbb{Z}_{p}$. Let $Q_{G}$ be a parabolic subgroup of $G$ with Levi decomposition $Q_{G} = L_{G} \times N_{G}$, where $N_{G}$ is the unipotent radical of $Q_{G}$ and $L_{G}$ is the Levi subgroup. Let $T_{G}$ be a maximal torus contained in $Q_{G}$ and let $S_{G} = L_{G}^{\mathrm{der}}\backslash L_{G}$. Write $S_{n}(\mathbb{Z}_{p}) \subset S_{G}(\mathbb{Z}_{p})$ be the subgroup of points which reduce to the identity mod $p^{n}$.  
 Let $\chi \in X^{\bullet}(L_{G})$, $\lambda \in X^{\bullet}(T_{G})$ be characters such that $\lambda$ is dominant for $B_{L_{G}}$ and $\lambda + \chi$ is dominant for $B_{G}$ and write $V_{\lambda + \chi}$ for the $K$-linear irreducible representation of $G$ of highest weight $\lambda$ and $W_{\lambda}$ for the $K$-linear irreducible representation of $L_{G}$ of highest weight $\lambda$. Write $V_{\lambda, \mathcal{O}_{K}}$ for the minimal admissible $\mathcal{O}_{K}$-lattice in $V_{\lambda + \chi}$ and $W_{\lambda,\mathcal{O}_{K}}$ for the minimal admissible lattice in $W_{\lambda}$. Let $\Gamma \subset G(\mathbb{Q}) \cap G(\hat{\mathbb{Z}})$ be a congruence subgroup of level prime to $p$, let $\Gamma_{0}(p^{n})$ be the subgroup of points which reduce to $Q_{G}(\mathbb{Z}/p^{n}\mathbb{Z}) \mod p^{n}$  and let $\Gamma_{1}(p^{n})$ be the subgroup of points which reduce to $N_{G}(\mathbb{Z}/p^{n}\mathbb{Z}) \mod p^{n}$.  
 
 We prove the following theorem:
\begin{theorem} 
For all $\lambda$ as above there is a perfect complex $M_{\lambda}^{\bullet} \in \mathscr{D}(\mathcal{O}_{K}[[S_{G}(\mathbb{Z}_{p})]])$ concentrated in degrees $[0,\nu]$ satisfying 
$$
H^{i}(M_{\lambda}^{\bullet}) = \varprojlim_{n} H^{i}(\Gamma_{1}(p^{n}), W_{\lambda,\mathcal{O}_{K}}/p^{n})^{\mathrm{ord}}
$$ 
and for all $\chi$ as above there is a quasi-isomorphism
$$
    M_{\lambda}^{\bullet} \otimes^{L}_{\mathcal{O}_{K}[[S_{n}(\mathbb{Z}_{p})]]} \mathcal{O}_{K}^{(\chi)} \sim R\Gamma(\Gamma_{1}(p^{n}), V_{\lambda + \chi, \mathcal{O}_{K}})^{\mathrm{ord}},
$$
for $n \geq 1$ and a quasi-isomorphism
$$
     M_{\lambda}^{\bullet} \otimes^{L}_{\mathcal{O}_{K}[[S_{G}(\mathbb{Z}_{p})]]} \mathcal{O}_{K}^{(\chi)} \sim R\Gamma(\Gamma_{0}(p^{n}), V_{\lambda + \chi, \mathcal{O}_{K}})^{\mathrm{ord}}
$$
for $n = 0$. 
\end{theorem}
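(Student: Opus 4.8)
The plan is to realise $M_\lambda^\bullet$ as the ordinary direct summand of a single $\Lambda$-adic Borel--Serre complex, where $\Lambda=\mathcal{O}_K[[S_G(\mathbb{Z}_p)]]$, and then to extract the two base-change statements from a derived, $\Lambda$-adic version of Hida's classical independence-of-level and independence-of-weight lemmas; the essential point is that specialisation along $\chi$ interacts \emph{exactly} (without higher $\mathrm{Tor}$) with the ordinary projector.

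\textbf{Step 1: finite-level complexes and the projector.} Fix a finite $\Gamma$-equivariant CW-structure on the Borel--Serre compactification $\overline{X}$ of the symmetric space of $\mathcal{G}(\mathbb{R})$. For any arithmetic subgroup $\Gamma'\subseteq\Gamma$ this yields a complex $C^\bullet(\Gamma',-)$ of finite free $\mathbb{Z}[\Gamma']$-modules concentrated in degrees $[0,\nu]$ (passing to a neat normal subgroup or inverting the finitely many bad primes, which is harmless on ordinary parts with $p$-power coefficients), functorial in the coefficients and carrying the double-coset action of the $p$-parahoric Hecke algebra, in particular the operator $U_t$ attached to a fixed $t\in T_G(\mathbb{Q}_p)$ strictly contracting $N_G$, normalised by the relevant power of $\lambda$ (resp.\ $\lambda+\chi$) so as to preserve the chosen lattices. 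Since each term is finite free over $\mathcal{O}_K/p^n$ or over the complete Noetherian local ring $\Lambda$, its endomorphism ring is $(p)$- resp.\ $\mathfrak{m}$-adically complete, so $e:=\lim_k U_t^{k!}$ converges to an idempotent chain map and $(-)^{\mathrm{ord}}:=e(-)$ splits off a direct summand of complexes.

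\textbf{Step 2: the $\Lambda$-adic complex.} For $n\geq 1$ let $\Gamma_S(p^n)\subseteq\Gamma_0(p^n)$ be the kernel of $\Gamma_0(p^n)\to Q_G(\mathbb{Z}/p^n)\to L_G(\mathbb{Z}/p^n)\to S_G(\mathbb{Z}/p^n)$, so $\Gamma_1(p^n)\subseteq\Gamma_S(p^n)\subseteq\Gamma_0(p^n)$ and $Y(\Gamma_S(p^n))\to Y(\Gamma_0(p^n))$ is an $S_G(\mathbb{Z}/p^n)$-cover; a Hida-style comparison shows that applying $e$ collapses the $L_G^{\mathrm{der}}$-direction, i.e.\ $R\Gamma(\Gamma_S(p^n),W_{\lambda,\mathcal{O}_K}/p^n)^{\mathrm{ord}}\simeq R\Gamma(\Gamma_1(p^n),W_{\lambda,\mathcal{O}_K}/p^n)^{\mathrm{ord}}$. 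Feeding the ``Iwasawa local system'' $W_{\lambda,\mathcal{O}_K}\widehat\otimes_{\mathcal{O}_K}\Lambda$ (with $\Lambda$ acting through the tower of $S_G(\mathbb{Z}/p^n)$-covers) into $C^\bullet(\Gamma,-)$ and taking ordinary parts produces a bounded complex $\mathcal{C}^\bullet$ of finite free $\Lambda$-modules in degrees $[0,\nu]$. Being a homotopy direct summand of a finite free $\Lambda$-complex, it is perfect and concentrated in $[0,\nu]$; and a limit argument (exactness of $e$, finiteness of the terms, continuity of the tower) identifies $H^i(\mathcal{C}^\bullet)$ with $\varprojlim_n H^i(\Gamma_1(p^n),W_{\lambda,\mathcal{O}_K}/p^n)^{\mathrm{ord}}$. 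Set $M_\lambda^\bullet:=\mathcal{C}^\bullet$.

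\textbf{Step 3: control, and the main obstacle.} Two inputs finish the proof. First, since $\Lambda$ is finite free over $\mathcal{O}_K[[S_n(\mathbb{Z}_p)]]$ of rank $|S_G(\mathbb{Z}/p^n)|$, $M_\lambda^\bullet$ is finite free over $\mathcal{O}_K[[S_n(\mathbb{Z}_p)]]$, so $M_\lambda^\bullet\otimes^{L}_{\mathcal{O}_K[[S_n(\mathbb{Z}_p)]]}\mathcal{O}_K^{(\chi)}$ is underived, and a Shapiro-type computation unwinds the induced $S_G(\mathbb{Z}/p^n)$-cover, identifying it with $R\Gamma(\Gamma_S(p^n),W_{\lambda,\mathcal{O}_K}\otimes\chi)^{\mathrm{ord}}$ for $n\geq 1$ --- hence, by the Step 2 comparison, with $R\Gamma(\Gamma_1(p^n),W_{\lambda,\mathcal{O}_K}\otimes\chi)^{\mathrm{ord}}$ --- and with $R\Gamma(\Gamma_0(p^0),W_{\lambda,\mathcal{O}_K}\otimes\chi)^{\mathrm{ord}}=R\Gamma(\Gamma,W_{\lambda,\mathcal{O}_K}\otimes\chi)^{\mathrm{ord}}$ when $n=0$ (the specialisation over $\Lambda$ itself). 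Second, independence of weight: restricting $V_{\lambda+\chi,\mathcal{O}_K}$ to the semigroup generated by the arithmetic group and $t$, there is a $U_t$-stable filtration with $L_G$-representations as graded pieces, top piece $W_{\lambda,\mathcal{O}_K}\otimes\chi$, and $U_t$ topologically nilpotent on every lower piece --- essentially the $\mathfrak{n}_G$-cohomology / Jacquet-module computation, and exactly where the hypotheses that $\lambda$ be $B_{L_G}$-dominant, that $\lambda+\chi$ be $B_G$-dominant, and that the lattices be minimal admissible (together with strict contraction of $t$) are used; applying $e$ kills the lower pieces and gives $R\Gamma(\Gamma',V_{\lambda+\chi,\mathcal{O}_K})^{\mathrm{ord}}\simeq R\Gamma(\Gamma',W_{\lambda,\mathcal{O}_K}\otimes\chi)^{\mathrm{ord}}$ at each finite level, which composed with the first input yields the two displayed quasi-isomorphisms. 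I expect the genuine difficulty to lie not in any single homological step but in making Steps 2 and 3 compatible: one must show that the ordinary projector exists $\Lambda$-adically \emph{and} commutes with every specialisation without higher $\mathrm{Tor}$, which reduces to proving that $U_t$ is \emph{uniformly} topologically nilpotent on the non-top graded pieces of $V_{\lambda+\chi}$ and that this filtration is compatible with the Iwasawa local system on $\mathcal{C}^\bullet$ --- i.e.\ a $\Lambda$-adic derived refinement of Hida's control theorem, for which the precise normalisation of $U_t$ and the minimality/admissibility of $V_{\lambda,\mathcal{O}_K}$ and $W_{\lambda,\mathcal{O}_K}$ are the load-bearing hypotheses.
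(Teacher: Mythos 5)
Your overall strategy --- a fixed-level complex with a big Iwasawa-theoretic coefficient module, the ordinary idempotent $e=\lim U^{k!}$, then Shapiro plus an ``ordinarity kills the non-top pieces'' comparison to obtain both specialisation statements --- is the same as the paper's. But your Step 2 contains a genuine gap, and it sits exactly where the real work is. As stated, the coefficient ``Iwasawa local system'' $W_{\lambda,\mathcal{O}_K}\widehat\otimes_{\mathcal{O}_K}\Lambda$ with $\Lambda$ acting through the tower of $S_G(\mathbb{Z}/p^n)$-covers is not a module for the fixed-level group you feed it into: $\Gamma$ (or even $\Gamma_0(p)$) admits no homomorphism to $S_G(\mathbb{Z}/p^n)$ for $n>1$, so there is no rank-$|S_G(\mathbb{Z}/p^n)|$ local system on the fixed base with that monodromy. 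The honest fixed-level substitute is $\varprojlim_n \mathrm{Ind}_{\Gamma_S(p^n)}^{\Gamma_0(p)}(W_{\lambda}/p^n)$, which is the paper's $\mathbb{D}_{\mathrm{univ}}(\lambda)\cong\mathcal{O}[[S_G(\mathbb{Z}_p)\times\bar{N}_1]]\otimes W_{\lambda,\mathcal{O}}$; but this has infinite rank over $\Lambda$, since the index $[\Gamma_0(p):\Gamma_S(p^n)]$ grows in the $\bar{N}$-direction and not only in the $S_G$-direction. Hence the pre-ordinary $\Lambda$-adic complex is \emph{not} a complex of finite free $\Lambda$-modules, and ``homotopy direct summand of a finite free complex, hence perfect'' does not apply. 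The finiteness and flatness over $\Lambda$ of the \emph{ordinary} part is precisely Hida's vertical finiteness theorem and is the heart of the matter: the paper proves it (Proposition \ref{prop:2}) by identifying the reduction of the ordinary complex modulo the maximal ideal of $\Lambda_1$ with a finite module, applying Nakayama, and then verifying $\mathrm{Tor}_1$-vanishing via the local criterion for flatness and the regularity of the relevant sequence on $\mathbb{D}_{\mathrm{univ}}(\lambda)$; projectivity over $\Lambda_0$, which is not local, then needs the splitting argument of Lemma \ref{lem:11}. None of this follows from formal properties of $e$, so your Step 2 assertion is unproved.

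Once that is repaired, your Step 3 is in substance the paper's argument: specialisation of the big module at $\chi$ (Lemma \ref{lem:4}), Shapiro, and independence of weight. The paper proves the last point not by a Jacquet-type filtration at a fixed level but by working mod $p^s$ at the deeper levels $\Gamma_{1,r}\cap\Gamma_{0,s}$, where the contraction $\tau^{-k}\bar{N}_0\tau^{k}\equiv 1 \pmod{p^s}$ kills the difference between $V_{\lambda+\chi,s}$ and $W_{\lambda,s}(\chi)$ on ordinary parts (Proposition \ref{prop:6}), and then returns to level $\Gamma_{1,r}$ by the Hida lemma; your filtration argument is morally equivalent but would need the same care with lattices and the $*$-normalisation of $\tau^{-1}$. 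One caution about your framing ``without higher $\mathrm{Tor}$'': the base-change statements hold at the level of complexes because the terms are $\Lambda$-flat, but higher $\mathrm{Tor}$ of the individual cohomology groups genuinely occurs (this is the content of the paper's remark correcting Ash--Stevens), so you should not claim control of each $H^i$ separately.
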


The aim of this work is to provide a toolbox for those working with Euler systems varying in Hida families, such as those constructed in \cite{kings2015rankin} and \cite{LZGsp}, and to act as a companion piece to forthcoming work of Loeffler and Zerbes in which they construct such interpolating classes for the same broad class of reductive groups with which we work. 

We remark that there are many similar results in the literature, for example the work of Hida \cite{hidacontrol} for $\mathrm{SL}_{n}$ and Tilouine-Urban \cite{tilurb} for $\mathrm{GSp}_{4}$, albeit not in the derived setting. Indeed many of their proofs generalise readily to the general setting with only minor tweaks in order to work with complexes instead of cohomology groups and to account for changes in convention. The conventions in the aforementioned papers tend to differ greatly from those occurring in the literature on Euler systems and so we think it valuable, even in the existing cases, to have statements of these results with our conventions. 

The layout of the paper is as follows:
\begin{itemize}
    \item In Section \ref{sec:2} we fix the notations and conventions we will use for reductive groups, highest weight representations and interpolating modules. 
    \item In Section \ref{sec:3} we prove the derived control theorem. \item In Section \ref{sec:4} we prove $p$-stabilisation and duality results.
    \item In Section \ref{sec:5} we deduce control results for `adèlic cohomology'. We prove compatibility with the Hecke algebra $\mathbb{T}_{S,p}$ generated by the anemic Hecke algebra $\mathbb{T}_{S}$ and the $U_{p}$-operator and use this to prove a vanishing result for Iwasawa cohomology under the assumption that the Iwahori-level cohomology vanishes outside of the middle degree when localised at some maximal ideal of $\mathbb{T}_{S,p}$. 
\end{itemize}
\section{Notation} \label{sec:2}
\subsection{Algebraic groups and Iwasawa algebras}
The setting:
\begin{itemize}
\item $\mathcal{G}$ is a connected reductive algebraic $\mathbb{Q}$-group, unramified over $\mathbb{Q}_{p}$. The group-scheme $\mathcal{G}/\mathbb{Q}_{p}$ thus splits over a finite unramified extension $K$ of $\mathbb{Q}_{p}$ with ring of integers $\mathcal{O}$ and admits a reductive group-scheme model $G$ over $\mathbb{Z}_{p}$.
\item Fix a choice of Borel subgroup and maximal torus $B_{G} \supset T_{G}$ defined over $\mathbb{Z}_{p}$.
\item Fix a choice $Q_{G}$ of standard parabolic subgroup of $G$ with Levi factor $L_{G}$ and unipotent radical $N_{G}$.
Let $L_{G}^{\mathrm{der}}$ denote the derived subgroup of $L_{G}$. We write $\bar{Q}_{G}$ for the image of $Q_{G}$ under the longest Weyl element.
\item Let $S_{G} = L_{G}^{\mathrm{der}} \backslash L_{G}$.
\end{itemize}
Let $\eta: \mathbbm{G}_{m/\mathbb{Z}_{p}} \to Z(L_{G})$ be a cocharacter which is strictly dominant with respect to $Q_{G}$ in the sense that $\langle\eta, \Phi\rangle > 0$ for all relative roots $\Phi$. Set $\tau = \eta(p)$. Equivalently
$$
    \bigcap_{i}\tau^{-i}\bar{N}_{G}(\mathbb{Z}_{p})\tau^{i} = \{1\}.
$$
Define 
\begin{align*}
    N_{r} &= \tau^{r}N_{G}(\mathbb{Z}_{p})\tau^{-r},  \\
    \bar{N}_{r} &= \tau^{-r}N_{G}(\mathbb{Z}_{p})\tau^{r},  \\
    L_{r}^{\mathrm{der}} &=  \{ \ell \in L_{G}(\mathbb{Z}_{p}): \ell \ \mathrm{mod} \ p^{r} \in L_{G}^{\mathrm{der}}(\mathbb{Z}/p^{r}\mathbb{Z})\}
\end{align*}
for $r \geq 1$ and set $L_{0}^{\mathrm{der}} = L_{G}$ for future notational convenience.
Define level groups
\begin{align*}
    V_{0, r} &= \bar{N}_{r}L_{G}N_{0} \\
    V_{1,r} &= \bar{N}_{r}L^{\mathrm{der}}_{r}N_{0}.
\end{align*}
Fix a prime-to-$p$ congruence subgroup $\Gamma \subset G(\hat{\mathbb{Z}})$ and let 
$$
\Gamma_{?,r} = V_{?,r} \cap \Gamma,
$$
for $? \in \{0,1\}$.

Define
$$
    \Lambda_{0} := \mathcal{O}[[S_{G}(\mathbb{Z}_{p})]] 
$$
Let $S_{r}(\mathbb{Z}_{p}) = \{s \in S_{G}(\mathbb{Z}_{p}): s \equiv 1 \ \mathrm{mod} \ p^{r}\} = L_{G}^{\mathrm{der}}\backslash L_{r}^{\mathrm{der}}$. This is a free $\mathbb{Z}_{p}$-module. Set 
$$
    \Lambda_{r} := \mathcal{O}[[S_{r}(\mathbb{Z}_{p})]].
$$
The ring $\Lambda_{0}$ decomposes into a direct sum
$$
    \Lambda_{0} = \bigoplus_{\psi}\Lambda_{1}^{(\psi)},
$$
where the sum runs over characters $\psi: S_{1}(\mathbb{Z}_{p})\backslash S_{G}(\mathbb{Z}_{p}) \to \mathcal{O}^{\times}$ and $\Lambda_{1}^{(\psi)} = \Lambda_{1}$ with the action of $S_{1}(\mathbb{Z}_{p})\backslash S_{G}(\mathbb{Z}_{p}) $ given by $\psi$.

\begin{lemma} \label{lem:11}
Let $M$ be a  $\Lambda_{0}$-module. Suppose $M$ is free as a $\Lambda_{1}$-module under the inclusion $\Lambda_{1} \hookrightarrow \Lambda_{0}$. Then $M$ is projective as a $\Lambda_{0}$-module.
\end{lemma}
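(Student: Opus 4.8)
The plan is to deduce the lemma from the ring decomposition $\Lambda_{0} = \bigoplus_{\psi}\Lambda_{1}^{(\psi)}$ recorded above, using that a module over a finite product of rings is projective precisely when each of its factor-components is. First I would record the general fact: if $R = \prod_{i} R_{i}$ is a finite product of rings with associated central idempotents $e_{i}$, then an $R$-module $P$ is projective if and only if $e_{i}P$ is a projective $R_{i}$-module for every $i$ (both directions follow by splitting off summands of free modules, and no finiteness hypothesis on $P$ is needed). Applying this with $R = \Lambda_{0}$ and with $R_{i}$ ranging over the factors $\Lambda_{1}^{(\psi)}$ indexed by characters $\psi$ of $S_{1}(\mathbb{Z}_{p})\backslash S_{G}(\mathbb{Z}_{p})$, it then suffices to prove that $e_{\psi}M$ is projective over $\Lambda_{1}^{(\psi)}$ for each $\psi$, where $e_{\psi}\in\Lambda_{0}$ is the idempotent cutting out the $\psi$-summand.

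The key point is that the composite $\Lambda_{1}\hookrightarrow\Lambda_{0}\twoheadrightarrow\Lambda_{1}^{(\psi)}$ of the inclusion with the projection to the $\psi$-factor is an isomorphism of rings: on group-like elements $s\in S_{1}(\mathbb{Z}_{p})$ it sends $s\mapsto se_{\psi}$, which is exactly the image of $s$ under the identification $\Lambda_{1}^{(\psi)}=\Lambda_{1}$, and such elements topologically generate $\Lambda_{1}$. Hence the $\Lambda_{1}^{(\psi)}$-module structure on $e_{\psi}M$ coincides, through this isomorphism, with the $\Lambda_{1}$-module structure on $e_{\psi}M$ obtained by restriction of scalars along $\Lambda_{1}\hookrightarrow\Lambda_{0}$. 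Since $e_{\psi}$ is central in $\Lambda_{0}$, the submodule $e_{\psi}M$ is a $\Lambda_{0}$-module direct summand of $M$, hence in particular a $\Lambda_{1}$-module direct summand of $M$; as $M$ is free over $\Lambda_{1}$ by hypothesis, $e_{\psi}M$ is therefore $\Lambda_{1}$-projective, and so $\Lambda_{1}^{(\psi)}$-projective. Reassembling over all $\psi$ via the general fact above gives that $M$ is $\Lambda_{0}$-projective.

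I do not expect a genuine obstacle here. The two points needing a little care are (i) the compatibility of the idempotent decomposition of $\Lambda_{0}$ with the inclusion of $\Lambda_{1}$ in the precise sense used above, which ultimately rests on the splitting $S_{G}(\mathbb{Z}_{p})\cong S_{1}(\mathbb{Z}_{p})\times\bigl(S_{1}(\mathbb{Z}_{p})\backslash S_{G}(\mathbb{Z}_{p})\bigr)$ — available because the finite quotient has order prime to $p$ — and (ii) the bookkeeping of projectivity over a finite product of rings, which conveniently also handles a non--finitely generated $M$ with no extra effort. An essentially equivalent alternative avoids the decomposition altogether: given a surjection $F\twoheadrightarrow M$ from a free $\Lambda_{0}$-module, the hypothesis that $M$ is $\Lambda_{1}$-free allows one to split it $\Lambda_{1}$-linearly, and since $|S_{1}(\mathbb{Z}_{p})\backslash S_{G}(\mathbb{Z}_{p})|$ is a unit in $\mathcal{O}$ one may average the chosen splitting over that finite group to obtain a $\Lambda_{0}$-linear section, exhibiting $M$ as a direct summand of $F$.
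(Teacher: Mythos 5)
Your argument is correct and is essentially the paper's: the paper deduces the lemma from the decomposition $\Lambda_{0}=\bigoplus_{\psi}\Lambda_{1}^{(\psi)}$ by reducing to the projectivity of the factors $\Lambda_{1}^{(\psi)}$ over $\Lambda_{0}$, which is exactly what your idempotent, component-wise bookkeeping makes precise (the paper's one-line proof leaves those compatibility details implicit). Your closing averaging alternative is also valid and, as a bonus, would avoid needing the characters $\psi$ to be $\mathcal{O}$-valued, but it is not the route the paper takes.
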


\begin{proof}
It suffices to prove that $\Lambda_{1}$ is a projective $\Lambda_{0}$-module, which is clear from the above decomposition.
\end{proof}

Given a character $\chi: S_{r}(\mathbb{Z}_{p}) \to \mathcal{O}^{\times}$ we write 
$$
\chi^{\dagger}: \Lambda_{0} \to \mathcal{O}
$$ 
for the induced homomorphism.

\subsection{Chain complexes and Hecke algebras} \label{sec:1b}
 Let $R$ be a ring. For an arithmetic subgroup $\Gamma \subset G(\mathbb{Z})$ we can find a resolution $\mathscr{C}_{\bullet}(\Gamma)$ of $R$ by finite free $R[\Gamma]$-modules \cite[Lemma 4.2.2]{urbaneigen}. Given a left $R[\Gamma]$-module $M$, we define a complex
$$
    \mathscr{C}^{\bullet}(\Gamma, M) := \mathrm{Hom}_{R[\Gamma]}(\mathscr{C}_{\bullet}(\Gamma), M),
$$
satisfying $H^{i}( \mathscr{C}^{\bullet}(\Gamma, M)) = H^{i}(\Gamma, M)$. This of course depends on the choice of $\mathscr{C}_{\bullet}(\Gamma)$ but its image $R\Gamma(\Gamma, M)$ in the derived category does not. 

Given groups $\Gamma, \Delta$, a $\Gamma$-module $M$ and $\Delta$-module $N$, it is a standard fact from group cohomology (see e.g. \cite[4.2.5]{urbaneigen}) that a pair $(\phi, f)$ consisting of a group homomorphism $\phi: \Gamma \to \Delta$ and a map of abelian groups $f: N \to M$ satisfying
$$
f(\phi(\gamma)m) = \gamma f(m) 
$$
for all $n \in N$ and $\gamma \in \Gamma$ induce a natural map 
$$
    \mathscr{C}^{\bullet}(\Delta, M) \to \mathscr{C}^{\bullet}(\Gamma, N) 
$$   
\begin{example}
\begin{itemize}
    \item If $\iota: \Gamma \hookrightarrow \Delta$ and $M = N$ then we have the restriction map 
    $$
        \mathrm{res}^{\Delta}_{\Gamma} = (\iota, \mathrm{id})
    $$
    \item If $\alpha \in G(\mathbb{Q})$ acts on $M$, then define 
    $$
        [\alpha] = (\alpha(\cdot)\alpha^{-1}, \alpha(\cdot))
    $$
\end{itemize}
\end{example}

If $\Gamma' \subset \Gamma$ is a finite index subgroup, then $\mathscr{C}_{\bullet}(\Gamma)$ is also a resolution of $R$ by finite free $\Gamma'$-modules so there is a unique homotopy equivalence $\delta: \mathscr{C}_{\bullet}(\Gamma) \to \mathscr{C}_{\bullet}(\Gamma')$ extending the identity. We define the corestriction map 
$$
    \mathrm{cores}^{\Gamma}_{\Gamma'}: \mathrm{Hom}_{\Gamma'}(\mathscr{C}_{\bullet}(\Gamma'), M) \xrightarrow{\circ\delta} \mathrm{Hom}_{\Gamma'}(\mathscr{C}_{\bullet}(\Gamma), M) \xrightarrow{\sum \gamma_{i}} \mathrm{Hom}_{\Gamma}(\mathscr{C}_{\bullet}(\Gamma), M),
$$
where $\gamma_{i}$ is a full set of coset representatives for $\Gamma'\backslash\Gamma$.

Now suppose we have arithmetic groups $\Gamma, \Gamma' \subset \Gamma''$ and that $M$ is an $R[\Gamma'']$-module with a compatible action of $\alpha \in G(\mathbb{Q})$. The double coset $\Gamma\alpha\Gamma'$ defines a map 
$$
\mathscr{C}^{\bullet}(\Gamma, M) \to \mathscr{C}^{\bullet}(\Gamma', M) 
$$
via
\begin{equation} \label{eq:13}
    [\Gamma\alpha \Gamma'] = \mathrm{cores}_{\Gamma' \cap \alpha^{-1}\Gamma\alpha}^{\Gamma'} \circ [\alpha] \circ \mathrm{res}^{\Gamma}_{\alpha\Gamma'\alpha^{-1} \cap \Gamma},
\end{equation}
where we suppress the dependence on the homotopy $\delta$.
\begin{definition}
Define 
$$
    \mathcal{T} = [\Gamma \tau^{-1} \Gamma].
$$
\end{definition}

\subsubsection{Ordinary subspaces}
We perform a somewhat ad-hoc construction of the ordinary subspace. Let $(R, \mathfrak{m})$ be a local ring complete with respect to the $\mathfrak{m}$-adic topology. Suppose $M$ is a topological $R[\Gamma]$-module with a compatible action of $\tau^{-1}$ and such that the action of $\mathcal{T}$ on $\mathscr{C}^{\bullet}(\Gamma, M)$ is continuous for the induced product topology. Suppose further that $M$ is compact so that $\mathscr{C}^{\bullet}(\Gamma, M)$ is also compact. Then we can make sense of the ordinary part of $\mathscr{C}^{\bullet}(\Gamma, M)$:
$$
    \mathscr{C}^{i}(\Gamma, M)^{\mathrm{ord}} := \bigcap_{ n\geq 0} \mathcal{T}^{n}\mathscr{C}^{i}(\Gamma, M).
$$
All of the coefficient modules that we consider will satisfy these conditions. 

Suppose we know that the quotients $ \mathscr{C}^{i}(\Gamma, M)/\mathfrak{m}^{n}$ are finite $R/\mathfrak{m}^{n}$-modules. Then 
by results of Pilloni \cite{pilloni} there is an idempotent $e = \lim_{n}\mathcal{T}^{n!}$ such that
$$
    e\mathscr{C}^{i}(\Gamma, M) = \mathscr{C}^{i}(\Gamma, M)^{\mathrm{ord}}.
$$

\subsection{Algebraic representations}
 Let $\lambda \in X^{\bullet}(T_{G})$ be dominant with respect to the Borel $B_{L_{G}} := B_{G} \cap L_{G}$ of $L_{G}$. Define
$$
    \mathscr{C}_{\mathrm{alg}}^{L_{G}}(\lambda) := \{f \in \mathcal{O}[L_{G/\mathcal{O}}]: f(bx) = (-\lambda)(b)f(x) \ \forall \ b \in B_{L_{G}/\mathcal{O}}\},
$$ 
an admissible $\mathcal{O}$-lattice (in the sense of \cite[4.2]{LZGsp}) in the $K$-linear irreducible representation of $L_{G/K}$ of lowest weight $-\lambda$ with left $L_{G}$ action given by right translation. Suppose $\chi \in X^{\bullet}(L_{G})$ is such that $\lambda + \chi$ is dominant for $G$, and write 
\begin{align*}
\mathscr{C}_{\mathrm{alg}}^{G}(\lambda + \chi) &= \{f \in \mathcal{O}[G_{/\mathcal{O}}] \otimes  \mathscr{C}_{\mathrm{alg}}^{L_{G}}(\lambda): f(qg) = (-\chi(q))qf(g) \ \forall \ q \in Q_{G/\mathcal{O}}\} \\
&\cong  \{f \in \mathcal{O}[G_{/\mathcal{O}}]: f(bg) = (-\lambda - \chi)(b)f(g) \ \forall \ b \in B_{G/\mathcal{O}}\}
\end{align*}
an admissible $\mathcal{O}$-lattice in the irreducible representation of $G_{/K}$ of lowest weight $-(\lambda + \chi)$. The above isomorphism is given by mapping the $\mathscr{C}_{\mathrm{alg}}^{L_{G}}(\lambda)$ factor to $K$ under the `evaluation at 1' map.

We take $f^{\mathrm{lw}}_{\lambda}$ to be the unique lowest weight vector satisfying $f^{\mathrm{lw}}_{\lambda}(\bar{n}mn) = \lambda(m)$ for $\bar{n}mn \in \bar{N}_{G}L_{G}N_{G}$. 

\begin{definition}
Define 
\begin{align*}
    W_{\lambda, \mathcal{O}} &= \mathrm{Hom}_{\mathcal{O}}(\mathscr{C}_{\mathrm{alg}}^{L_{G}}(\lambda), \mathcal{O}) \\
    V_{\lambda + \chi, \mathcal{O}} &= \mathrm{Hom}_{\mathcal{O}}( \mathscr{C}_{\mathrm{alg}}^{G}(\lambda + \chi), \mathcal{O}).
\end{align*}
given the structure of $L_{G/\mathcal{O}}$ (resp. $G_{/\mathcal{O}}$) representations via the contragredient representation. We note that $W_{\lambda,\mathcal{O}}$ is an admissible lattice in the $K$-linear representation of $L_{G}$ of highest weight $\lambda$ and $V_{\lambda + \chi, \mathcal{O}}$ is an admissible lattice in the $K$-linear highest weight representation of highest weight $\lambda + \chi$. 
\end{definition}

We define an action of $\tau$ on $V_{\lambda, \mathcal{O}}$ as follows: $\tau$ gives a well-defined map $V_{\lambda, \mathcal{O}} \otimes K \to V_{\lambda, \mathcal{O}} \otimes K$ and if we set $h_{\lambda} = \langle \eta, \lambda \rangle$, then $p^{h_{\lambda}}\tau^{-1}$ preserves the lattice $V_{\lambda, \mathcal{O}}$, so we let 
$$
    \tau^{-1} * v = p^{h_{\lambda}}\tau^{-1}v.
$$
\begin{remark}
This action corresponds to the action of $\tau$ on $\mathscr{C}_{\mathrm{alg}}^{G}(\lambda)$
given by restricting to the big Bruhat cell $\bar{N}_{G}L_{G}N_{G}$ and setting $(\tau * f)(\bar{n}\ell n) = f(\tau^{-1}\bar{n}\tau \ell n)$.
\end{remark}
\subsection{Distribution modules}
We define the distribution modules which will serve as the coefficients for our interpolating complexes. 

Define spaces 
\begin{align*}
  Y_{r} &= L_{r}^{\mathrm{der}}N_{0}\backslash V_{0,1} \cong  L_{r}^{\mathrm{der}}\backslash L_{G}(\mathbb{Z}_{p}) \times \bar{N}_{1} , \\ 
    Y_{\mathrm{univ}} &=    L_{G}^{\mathrm{der}}N_{0}\backslash V_{0,1} \cong  (L_{G}^{\mathrm{der}}\backslash L_{G})(\mathbb{Z}_{p}) \times \bar{N}_{1}.
\end{align*}

We extend the natural right action of $V_{0,1}$ on $Y_{r}$ to an action of the monoid generated by $V_{0,1}$ and $\tau$ by letting 
$$
    (\ell,n) * \tau = (\ell, \tau^{-1} n\tau).
$$
Given a character $\lambda \in X^{\bullet}(T_{G})$ dominant with respect to $B_{L_{G}}$, let
\begin{align*}  
    \mathscr{C}_{r}(\lambda) &= \{\text{Continuous} \ f: N_{G}(\mathbb{Z}_{p}) \backslash V_{0,1} \to \mathscr{C}_{\mathrm{alg}}^{L_{G}}(\lambda): f(\ell x) = \ell f(x) \ \forall \ \ell \in L_{r}^{\mathrm{der}}\}, \\
    \mathscr{C}_{\mathrm{univ}}(\lambda) &= \{\text{Continuous} \ f: N_{G}(\mathbb{Z}_{p}) \backslash V_{0,1} \to \mathscr{C}_{\mathrm{alg}}^{L_{G}}(\lambda): f(\ell x) = \ell f(x) \ \forall \ \ell \in L_{G}^{\mathrm{der}}\}
\end{align*}
We endow these spaces with an action of $V_{0,1}$ by right translation:
$$
    (g\cdot f)(x) = f(xg).
$$
We define a twisted action of $L_{G}(\mathbb{Z}_{p})$ on $ \mathscr{C}_{\mathrm{univ}}(\lambda)$:
$$
    (\ell\cdot f)(x) = \ell f(\ell^{-1}x).
$$
This action is trivial on $ L_{G}^{\mathrm{der}}(\mathbb{Z}_{p})$ and extends to an action of $\Lambda_{0}$ (this is well-defined as $L_{G}$ normalises $N_{G}$).
Define
\begin{align*}
   \mathbb{D}_{r}(\lambda) = \mathrm{Hom}_{\mathcal{O}, \mathrm{cont}}( \mathscr{C}_{r}(\lambda), \mathcal{O}), \\
    \mathbb{D}_{\mathrm{univ}}(\lambda) = \mathrm{Hom}_{\mathcal{O}, \mathrm{cont}}( \mathscr{C}_{\mathrm{univ}}(\lambda), \mathcal{O}).
\end{align*}
There are isomorphisms of $\mathcal{O}$-modules:
\begin{equation} \label{eq:11}
      \mathbb{D}_{r}(\lambda)  \cong \mathcal{O}[[Y_{r}]] \otimes W_{\lambda, \mathcal{O}},
\end{equation}
\begin{equation} \label{eq:12}
      \mathbb{D}_{\mathrm{univ}}(\lambda)  \cong \mathcal{O}[[Y_{\mathrm{univ}}]] \otimes W_{\lambda, \mathcal{O}}.
\end{equation}
Let $\chi \in X^{\bullet}(S_{G})$ be a character such that $\lambda + \chi$ is dominant for $G$. There is a natural map
$$
    \mathbb{D}_{\mathrm{univ}}(\lambda) \to \mathbb{D}_{r}(\lambda + \chi)
$$
factoring through $\mathbb{D}_{\mathrm{univ}}(\lambda) \otimes \mathcal{O}^{(\chi)}$, 
given by dualising the inclusion 
$$
    \mathscr{C}_{r}(\lambda + \chi) \hookrightarrow \mathscr{C}_{\mathrm{univ}}(\lambda).
$$

In our proof of the control theorem we will need a few finite modules.
\begin{definition}
Set $\mathcal{O}_{s} := \mathcal{O}/p^{s}$ and $\mathscr{C}_{\mathrm{alg}}^{L_{G}}(\lambda; p^{s}) := \mathscr{C}_{\mathrm{alg}}^{L_{G}}(\lambda)\otimes\mathcal{O}_{s}$. For $s \geq r$ we define the following $\mathcal{O}_{s}$-modules:
\begin{align*}
    \mathscr{C}_{r}(\lambda ; p^{s}) &:= \{f: N_{G}(\mathbb{Z}/p^{s}\mathbb{Z}) \backslash V_{0,1}(p^{s}) \to W_{\lambda,s}: f(x\ell) = \ell f(x) \ \forall \ell \in L_{r}^{\mathrm{der}} \} \\
    \mathbb{D}_{r}(\lambda ; p^{s}) &:= \mathrm{Hom}_{\mathcal{O}_{s}}( \mathscr{C}_{r}(\lambda ; p^{s}), \mathcal{O}_{s}) \\
     \tilde{\mathscr{C}}_{r}(\lambda ; p^{s}) &:= \{f: L_{G}(\mathbb{Z}/p^{s}\mathbb{Z}) \to \mathscr{C}_{\mathrm{alg}}^{L_{G}}(\lambda; p^{s}): f(x\ell) = \ell f(x) \ \forall \ell \in L_{r}^{\mathrm{der}}\} \\
      \tilde{\mathbb{D}}_{r}(\lambda ; p^{s}) &:= \mathrm{Hom}_{\mathcal{O}_{s}}( \tilde{\mathscr{C}}_{r}(\lambda ; p^{s}), \mathcal{O}_{s}),
\end{align*}
where $V_{0,1}(p^{s}) \subset G(\mathbb{Z}/p^{s}\mathbb{Z})$ is the mod $p^{s}$ reduction of $V_{0,1}$. We endow $\mathbb{D}_{r}(\lambda;p^{s})$ with the action of $\Gamma_{0,1}$ corresponding to right translation of functions and give $\tilde{\mathbb{D}}_{r}(\lambda;p^{s})$ an analogous action of $\Gamma_{0,s}$.
\end{definition}
The utility of these modules is given by the fact that
$$
     \tilde{\mathbb{D}}_{r}(\lambda ; p^{s}) = \mathrm{Ind}_{\Gamma_{1,r} \cap \Gamma_{0,s}}^{\Gamma_{0,s}}W_{\lambda, s},
$$
and 
$$
    \varprojlim_{s} \mathbb{D}_{r}(\lambda; p^{s}) = \mathbb{D}_{r}(\lambda).
$$
\section{Derived control} \label{sec:3}
Let $\nu$ be the virtual cohomological dimension of $G$. For a commutative ring $R$ let $\mathscr{D}(R)$ denote the derived category of $R$-modules. 
\begin{definition}
A bounded complex of $R$-modules is called \textit{perfect} if it consists of finite projective $R$-modules. We call an object $M \in \mathscr{D}(R)$ perfect if it can be lifted to a perfect complex of $R$-modules.
\end{definition}

Write $\mathcal{O}^{(\chi)}$ for $\mathcal{O}$ with $\Lambda_{0}$-module structure given by $\chi^{\dagger}$.
We prove the following theorem. 
\begin{theorem} \label{thm:5}
For each $\lambda \in X^{\bullet}(T_{G})$ dominant for $B_{L_{G}}$ there is a perfect complex $M_{\lambda}^{\bullet} \in \mathscr{D}(\Lambda_{0})$ concentrated in degrees $[0,\nu]$ satisfying 
$$
H^{i}(M_{\lambda}^{\bullet}) = \varprojlim_{r} H^{i}(\Gamma_{1,r}, W_{\lambda,
\mathcal{O}}\otimes\mathcal{O}_{r})^{\mathrm{ord}}
$$ 
and for all $\chi \in X^{\bullet}(S_{G})$  such that $\lambda + \chi$ is dominant for $G$ there are a quasi-isomorphisms
$$
    M_{\lambda}^{\bullet} \otimes^{L}_{\Lambda_{r}} \mathcal{O}^{(\chi)} \sim R\Gamma(\Gamma_{1,r}, V_{\lambda + \chi, \mathcal{O}})^{\mathrm{ord}}
$$
for $r \geq 1$ and 
$$
     M_{\lambda}^{\bullet} \otimes^{L}_{\Lambda_{0}} \mathcal{O}^{(\chi)} \sim R\Gamma(\Gamma_{0,1}, V_{\lambda + \chi, \mathcal{O}})^{\mathrm{ord}}
$$
for $r = 0$. 
\end{theorem}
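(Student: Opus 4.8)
The plan is to realise $M_\lambda^\bullet$ as the ordinary part of a single fixed complex computing the cohomology of $\Gamma_{0,1}$ with coefficients in the universal distribution module $\mathbb D_{\mathrm{univ}}(\lambda)$, and then to deduce every assertion by comparing $\mathbb D_{\mathrm{univ}}(\lambda)$ with the finite-level modules $\mathbb D_r(\lambda;p^s)$, $\tilde{\mathbb D}_r(\lambda;p^s)$ of Section~\ref{sec:2}. Taking $\mathscr C_\bullet(\Gamma_{0,1})$ concentrated in degrees $[0,\nu]$, I would set
$$
M_\lambda^\bullet := e\,\mathscr C^\bullet\!\big(\Gamma_{0,1},\mathbb D_{\mathrm{univ}}(\lambda)\big),\qquad e=\textstyle\lim_n\mathcal T^{n!}.
$$
The twisted $\Lambda_0$-action on $\mathbb D_{\mathrm{univ}}(\lambda)$ commutes with the $\Gamma_{0,1}$-action (as $L_G$ normalises $N_G$) and with $\mathcal T$, hence with $e$, so $M_\lambda^\bullet$ is a complex of $\Lambda_0$-modules concentrated in degrees $[0,\nu]$.

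For perfectness, \eqref{eq:12} gives $\mathbb D_{\mathrm{univ}}(\lambda)\cong\mathcal O[[Y_{\mathrm{univ}}]]\otimes_\mathcal O W_{\lambda,\mathcal O}$ with $Y_{\mathrm{univ}}=\varprojlim_r Y_r$ and the $\Lambda_0$-action factoring through the $(L_G^{\mathrm{der}}\backslash L_G)(\mathbb Z_p)$-factor; hence $\mathbb D_{\mathrm{univ}}(\lambda)$ is an inverse limit of finite free $\Lambda_0$-modules, in particular $\Lambda_0$-flat, and so is each term of $\mathscr C^\bullet(\Gamma_{0,1},\mathbb D_{\mathrm{univ}}(\lambda))$ and its direct summand $M_\lambda^i=e\mathscr C^i$. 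Thus $M_\lambda^\bullet$ is a bounded complex of flat modules over the Noetherian ring $\Lambda_0$, and it will be perfect (replaceable by finite projectives in degrees $[0,\nu]$) once each $H^i(M_\lambda^\bullet)=eH^i(\Gamma_{0,1},\mathbb D_{\mathrm{univ}}(\lambda))$ is a finitely generated $\Lambda_0$-module, for then $M_\lambda^\bullet$ is pseudo-coherent of finite Tor-dimension. This finiteness is the main obstacle. I would establish it by writing $\mathbb D_{\mathrm{univ}}(\lambda)=\varprojlim_{r,s}\mathbb D_r(\lambda;p^s)$ as an inverse limit of finite modules with surjective transition maps, so that cohomology commutes with the limit (the systems being Mittag--Leffler), reducing to a uniform bound on the $\Lambda_0$-modules $eH^i(\Gamma_{0,1},\mathbb D_r(\lambda;p^s))$, i.e.\ after reduction modulo $\mathfrak m_{\Lambda_0}$ to the finite-dimensionality of $eH^i\big(\Gamma_{0,1},\mathbb D_{\mathrm{univ}}(\lambda)\otimes_{\Lambda_0}\Lambda_0/\mathfrak m_{\Lambda_0}\big)$. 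This is exactly where the contraction $\bigcap_i\tau^{-i}\bar N_G(\mathbb Z_p)\tau^i=\{1\}$ is used: $e$ annihilates the part of the coefficient module that grows with $r$ along the $\bar N_1$-coordinate, leaving a module of bounded size — the analogue here of Hida's finiteness theorem, in the spirit of \cite{pilloni}.

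To compute $H^i(M_\lambda^\bullet)$ I would chase definitions. Writing $\mathbb D_{\mathrm{univ}}(\lambda)=\varprojlim_r\mathbb D_r(\lambda)$ and again using Mittag--Leffler, $H^i(M_\lambda^\bullet)=\varprojlim_r eH^i(\Gamma_{0,1},\mathbb D_r(\lambda))$, so it remains to identify $eH^i(\Gamma_{0,1},\mathbb D_r(\lambda))$ with $\varprojlim_s H^i(\Gamma_{1,r},W_{\lambda,\mathcal O}/p^s)^{\mathrm{ord}}$; since the diagonal is cofinal in the double index, this yields the stated formula with coefficients $W_{\lambda,\mathcal O}\otimes\mathcal O_r$. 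Modulo $p^s$ I would pass from $\mathbb D_r(\lambda;p^s)$ to $\tilde{\mathbb D}_r(\lambda;p^s)$ (the natural comparison map is an isomorphism on $e$-parts, the difference lying along the $\bar N_1$-coordinate which $e$ kills), then use $\tilde{\mathbb D}_r(\lambda;p^s)=\mathrm{Ind}_{\Gamma_{1,r}\cap\Gamma_{0,s}}^{\Gamma_{0,s}}W_{\lambda,s}$ and Shapiro's lemma to get $H^i(\Gamma_{0,s},\tilde{\mathbb D}_r(\lambda;p^s))=H^i(\Gamma_{1,r}\cap\Gamma_{0,s},W_{\lambda,s})$, and finally the standard insensitivity of the ordinary part to the level in the unipotent direction to replace $\Gamma_{0,s}$ by $\Gamma_{0,1}$ and $\Gamma_{1,r}\cap\Gamma_{0,s}$ by $\Gamma_{1,r}$.

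For the specialisations, $\Lambda_0$ is finite free over $\Lambda_r$, so $M_\lambda^\bullet$ consists of flat $\Lambda_r$-modules and
$$
M_\lambda^\bullet\otimes^L_{\Lambda_r}\mathcal O^{(\chi)}=M_\lambda^\bullet\otimes_{\Lambda_r}\mathcal O^{(\chi)}=e\,\mathscr C^\bullet\!\big(\Gamma_{0,1},\mathbb D_{\mathrm{univ}}(\lambda)\otimes_{\Lambda_r}\mathcal O^{(\chi)}\big),
$$
the tensor commuting with $\mathrm{Hom}_{\mathcal O[\Gamma_{0,1}]}(\mathscr C_\bullet(\Gamma_{0,1}),-)$ ($\mathscr C_\bullet(\Gamma_{0,1})$ being finite free) and with $e$ (since the $\Lambda_0$-action commutes with $\mathcal T$, $\tau=\eta(p)$ being central in $L_G$); the same applies verbatim with $\Lambda_0$ in place of $\Lambda_r$ when $r=0$. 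The natural map of Section~\ref{sec:2} identifies $\mathbb D_{\mathrm{univ}}(\lambda)\otimes_{\Lambda_r}\mathcal O^{(\chi)}$ with $\mathbb D_r(\lambda+\chi)$ (using $\mathcal O[[Y_{\mathrm{univ}}]]\otimes_{\Lambda_r}\mathcal O^{(\chi)}\cong\mathcal O[[Y_r]]$ twisted by $\chi$, together with $W_{\lambda,\mathcal O}\otimes\chi=W_{\lambda+\chi,\mathcal O}$), and likewise for $r=0$. One is thus reduced to the finite-level control statement $eR\Gamma(\Gamma_{0,1},\mathbb D_r(\lambda+\chi))\sim eR\Gamma(\Gamma_{1,r},V_{\lambda+\chi,\mathcal O})$ for $r\ge1$ (and $eR\Gamma(\Gamma_{0,1},\mathbb D_0(\lambda+\chi))\sim eR\Gamma(\Gamma_{0,1},V_{\lambda+\chi,\mathcal O})$ for $r=0$). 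This follows the pattern of the previous paragraph but now with full $\mathcal O$-coefficients: dualising the restriction of the coordinate ring underlying $V_{\lambda+\chi,\mathcal O}$ to the big Bruhat cell gives a $\Gamma_{0,1}$- and $\tau$-equivariant comparison $\mathbb D_r(\lambda+\chi)\to\mathrm{Ind}_{\Gamma_{1,r}}^{\Gamma_{0,1}}V_{\lambda+\chi,\mathcal O}$ whose kernel and cokernel are built from the higher graded pieces of $V_{\lambda+\chi,\mathcal O}|_{Q_G}$, on which the normalised $\tau^{-1}$ acts topologically nilpotently, so $e$ annihilates them; hence the comparison becomes a quasi-isomorphism after $eR\Gamma(\Gamma_{0,1},-)$, and Shapiro's lemma identifies the target with $eR\Gamma(\Gamma_{1,r},V_{\lambda+\chi,\mathcal O})$.
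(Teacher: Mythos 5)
Your overall strategy is the paper's: you take $M_\lambda^\bullet$ to be the ordinary part of $\mathscr C^\bullet(\Gamma_{0,1},\mathbb D_{\mathrm{univ}}(\lambda))$, identify $\mathbb D_{\mathrm{univ}}(\lambda)\otimes_{\Lambda_r}\mathcal O^{(\chi)}$ with $\mathbb D_r(\lambda+\chi)$ (this is Lemma \ref{lem:4}), pull the tensor through the complex because $\mathscr C_\bullet(\Gamma_{0,1})$ is finite free and the $\Lambda_0$-action commutes with $\mathcal T$ (Lemma \ref{lem:16}), and then compare with $V_{\lambda+\chi,\mathcal O}$ at level $\Gamma_{1,r}$ using Shapiro's lemma, the insensitivity of the ordinary part to the level in the $\bar N$-direction (the `Hida lemma'), and the fact that the ordinary projector kills the complement of the big-cell/highest-weight part (Proposition \ref{prop:6}); your computation of $H^i(M_\lambda^\bullet)$ follows the same chain as Lemma \ref{lem:5} and its corollary. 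Where you genuinely diverge is perfectness. The paper works termwise: the terms of the ordinary complex are finitely generated over $\Lambda_1$ (reduction modulo $\mathfrak m_1$ is identified with a complex with finite coefficients, then Nakayama), flat by the local criterion since the regular sequence generating $\mathfrak m_1$ acts regularly on $\mathbb D_{\mathrm{univ}}(\lambda)$, hence finite free over the local ring $\Lambda_1$ and projective over $\Lambda_0$ via the decomposition $\Lambda_0=\oplus_\psi\Lambda_1^{(\psi)}$ (Lemma \ref{lem:11}); this simultaneously justifies replacing $\otimes^L$ by $\otimes$. You instead get $\otimes^L=\otimes$ from flatness of $\mathbb D_{\mathrm{univ}}(\lambda)$ itself over $\Lambda_0$ (pro-freeness of $\mathcal O[[Y_{\mathrm{univ}}]]$), and deduce perfectness abstractly: bounded complex of flats over a Noetherian ring with finitely generated cohomology is pseudo-coherent of finite Tor-amplitude, hence perfect. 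That criterion is valid and buys you independence from any termwise finiteness; the paper's route buys an explicit complex of finite projectives and makes the base-change and Hecke-algebra arguments of Sections \ref{sec:3} and \ref{sec:5} more concrete.

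The price of your packaging is that the entire weight falls on finite generation of $eH^i(\Gamma_{0,1},\mathbb D_{\mathrm{univ}}(\lambda))$ over $\Lambda_0$, which you only sketch. To close this you need precisely the mechanism of Proposition \ref{prop:2}: identify the reduction modulo the maximal ideal with the ordinary part of a complex with finite coefficients (a Proposition \ref{prop:6}-style big-cell argument), and then apply Nakayama in its topological form, since the cohomology modules are a priori only compact, not finitely generated; an unqualified appeal to ``Hida finiteness'' is not enough as written. The second thin spot is your one-step integral comparison $\mathbb D_r(\lambda+\chi)\to\mathrm{Ind}_{\Gamma_{1,r}}^{\Gamma_{0,1}}V_{\lambda+\chi,\mathcal O}$ in the final paragraph: the existence of such a $\mathcal T$-equivariant map and, especially, the claim that $e$ kills the cokernel as well as the kernel are not immediate. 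The paper avoids this by working modulo $p^s$, where the induction identity $\tilde{\mathbb D}_r(\lambda;p^s)=\mathrm{Ind}_{\Gamma_{1,r}\cap\Gamma_{0,s}}^{\Gamma_{0,s}}W_{\lambda,s}$ is exact, the big-cell vanishing is an honest congruence ($\tau^{-k}\bar N_0\tau^k\equiv 1 \bmod p^s$ for $k\ge s$), the Hida lemma handles the level change, and the integral statement is recovered by passing to the inverse limit over $s$; your own mod-$p^s$ argument for $H^i(M_\lambda^\bullet)$ is the safer template and should be used for the specialisation statement as well.
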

\begin{remark}
This result corrects an error in the main result of \cite{AshSteve} in which a small indexing mistake in the proof of Lemma 1.1 hides the contribution of some inscrutable Tor groups to the kernel of the specialisation map. Explicitly, it is stated in \textit{op.cit} that for $G = \mathrm{GL}_{n}$ there is an injective map
$$
    H^{i}(\Gamma_{0,1}, \mathbb{D}_{\mathrm{univ}})^{\mathrm{ord}}/I^{(\lambda)}_{0} \hookrightarrow H^{i}(\Gamma_{0,1}, V_{\lambda, \mathcal{O}})^{\mathrm{ord}}
$$
for all $i$. Our analysis shows that for this to happen it is necessary that the image of the Tor group $\mathrm{Tor}_{-2}^{\Lambda_{0}}( H^{i - 1}(\Gamma_{0,1}, \mathbb{D}_{\mathrm{univ}})^{\mathrm{ord}}, \mathcal{O}^{(\lambda)})$ in $H^{i}(\Gamma_{0,1}, \mathbb{D}_{\mathrm{univ}})^{\mathrm{ord}}/I^{(\lambda)}_{0}$ vanishes. It seems to us that there is no \textit{a priori} reason that this should be the case- it is not purely formal from the numerology. We describe in Section \ref{sec:5} some additional hypothesis that force the vanishing of the Tor groups which constitute the obstruction to the statement in \textit{op. cit}. 
\end{remark}
\subsection{Control} 
In this section we prove that there is a sequence of quasi-isomorphisms for $r \geq 0$:
$$
    \mathscr{C}^{\bullet}(\Gamma_{0,1}, \mathbb{D}_{\mathrm{univ}}(\lambda))^{\mathrm{ord}} \otimes_{\Lambda_{r}} \mathcal{O}^{(\chi)} \sim \mathscr{C}^{\bullet}(\Gamma_{0,1}, \mathbb{D}_{r}(\lambda + \chi))^{\mathrm{ord}} \sim \mathscr{C}^{\bullet}(\Gamma_{1,r}, V_{\lambda + \chi, \mathcal{O}})^{\mathrm{ord}}.
$$

\begin{definition}
Let $\{s^{(r)}_{i}\}_{i}$ be a $\mathbb{Z}_{p}$-basis for $T_{r}(\mathbb{Z}_{p})$. Given an algebraic character $\chi : S_{G}(\mathbb{Z}_{p}) \to \mathcal{O}^{\times}$ we write $I^{(\chi)}_{r}$ for the kernel of the induced homomorphism
$$
    \chi^{\dagger}: \Lambda_{r} \to \mathcal{O}.
$$ 
It is generated by the regular sequence $([s^{(r)}_{i}] - \chi(s^{(r)}_{i}))_{i}$.
\end{definition}
The first of the above sequence of quasi-isomorphisms is an immediate consequence of the following lemma.
\begin{lemma} \label{lem:4}
The following are equivalent:
\begin{itemize}
    \item $\mu \in I^{(\chi)}_{r}\mathbb{D}_{\mathrm{univ}}(\lambda)$
    \item The image of $\mu$ under the map
    $$
        \mathbb{D}_{\mathrm{univ}}(\lambda) \to \mathbb{D}_{r}(\lambda + \chi)
    $$
    is zero.
\end{itemize}
\end{lemma}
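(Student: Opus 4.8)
The plan is to reduce the statement to a concrete computation on the level of functions, using the isomorphism $\mathbb{D}_{\mathrm{univ}}(\lambda) \cong \mathcal{O}[[Y_{\mathrm{univ}}]] \otimes W_{\lambda, \mathcal{O}}$ of \eqref{eq:12} and the analogous description \eqref{eq:11} of $\mathbb{D}_{r}(\lambda + \chi)$. The direction that $I^{(\chi)}_{r}\mathbb{D}_{\mathrm{univ}}(\lambda)$ maps to zero is the easy one: the map $\mathbb{D}_{\mathrm{univ}}(\lambda) \to \mathbb{D}_{r}(\lambda + \chi)$ is dual to the inclusion $\mathscr{C}_{r}(\lambda + \chi) \hookrightarrow \mathscr{C}_{\mathrm{univ}}(\lambda)$, and by construction this inclusion is $\Lambda_{0}$-equivariant once the target is given the twisted $L_{G}(\mathbb{Z}_p)$-action and the source the $\chi$-twisted action; since $\mathscr{C}_{r}(\lambda+\chi)$ consists of functions on which $S_{r}(\mathbb{Z}_p)$ acts through $\chi$, each generator $[s^{(r)}_{i}] - \chi(s^{(r)}_{i})$ of $I^{(\chi)}_{r}$ kills every element of $\mathscr{C}_{r}(\lambda+\chi)$, hence annihilates the image, so the dual map factors through $\mathbb{D}_{\mathrm{univ}}(\lambda)/I^{(\chi)}_{r}\mathbb{D}_{\mathrm{univ}}(\lambda)$. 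Equivalently, one checks directly on $\mathcal{O}[[Y_{\mathrm{univ}}]]$-coefficients that the quotient map $\mathcal{O}[[Y_{\mathrm{univ}}]] \to \mathcal{O}[[Y_{r}]]$ (pushforward along $L^{\mathrm{der}}_G N_0 \backslash V_{0,1} \to L^{\mathrm{der}}_r N_0\backslash V_{0,1}$), twisted suitably by $\chi$ on the $W$-factor, is precisely reduction modulo the $I^{(\chi)}_{r}$-action.

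For the reverse implication, which is the substantive content, I would argue as follows. Writing $S_{G}(\mathbb{Z}_p)$ as an extension of the finite group $S_{1}(\mathbb{Z}_p)\backslash S_{G}(\mathbb{Z}_p)$ by the free $\mathbb{Z}_p$-module $S_{r}(\mathbb{Z}_p)$, one sees $\mathbb{D}_{\mathrm{univ}}(\lambda)$ is a finitely generated free module over the completed group algebra $\mathcal{O}[[S_{r}(\mathbb{Z}_p)]] = \Lambda_{r}$ — indeed $\mathcal{O}[[Y_{\mathrm{univ}}]] \cong \mathcal{O}[[Y_r]] \widehat{\otimes}_{\mathcal{O}} \mathcal{O}[[S_r(\mathbb{Z}_p)]]$ via the fibration $Y_{\mathrm{univ}} \to Y_r$ with fibre $S_r(\mathbb{Z}_p)$, and this identification is $\Lambda_r$-linear for the translation action. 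Under this identification the map to $\mathbb{D}_r(\lambda+\chi)$ is exactly the base change $\mathbb{D}_{\mathrm{univ}}(\lambda) \otimes_{\Lambda_r} \mathcal{O}^{(\chi)} \xrightarrow{\sim} \mathbb{D}_r(\lambda+\chi)$, so an element mapping to zero lies in the kernel of the surjection from a free $\Lambda_r$-module to its reduction mod $I^{(\chi)}_r$, which is precisely $I^{(\chi)}_r \mathbb{D}_{\mathrm{univ}}(\lambda)$. The one subtlety to handle carefully is the interaction of the $\chi$-twist with the $W_{\lambda,\mathcal{O}}$-factor versus the $W_{\lambda+\chi,\mathcal{O}}$-factor appearing on the two sides of \eqref{eq:11}–\eqref{eq:12}: since $\chi$ is a character of $S_G = L_G^{\mathrm{der}}\backslash L_G$ and $L_G^{\mathrm{der}}$ acts trivially there, twisting $W_{\lambda,\mathcal{O}}$ by $\chi$ as an $L_G$-representation does give $W_{\lambda+\chi,\mathcal{O}}$, so the bookkeeping is consistent, but I would spell this out to make sure the $\Lambda_r$-module structures genuinely match.

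The main obstacle I anticipate is not conceptual but bookkeeping: verifying that the isomorphism $\mathbb{D}_{\mathrm{univ}}(\lambda) \cong \mathcal{O}[[Y_{\mathrm{univ}}]]\otimes W_{\lambda,\mathcal{O}}$ is compatible — up to the $\chi$-twist — with the $\Lambda_r$-module structures on both sides and with the specialisation map, so that "maps to zero in the base change" really translates to "lies in $I^{(\chi)}_r$ times the module". Concretely one must check that the twisted $L_G(\mathbb{Z}_p)$-action $(\ell\cdot f)(x) = \ell f(\ell^{-1}x)$ on $\mathscr{C}_{\mathrm{univ}}(\lambda)$ corresponds, after dualising and applying \eqref{eq:12}, to the diagonal of translation on $\mathcal{O}[[Y_{\mathrm{univ}}]]$ and the algebraic action on $W_{\lambda,\mathcal{O}}$, and that restricting to $S_r(\mathbb{Z}_p)$ this is free of finite rank; freeness then makes the kernel of reduction equal to $I^{(\chi)}_r$ times the module, with no higher $\mathrm{Tor}$ contributions (which is exactly why the base change is an honest quotient here, as opposed to the derived phenomenon flagged in the Remark). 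Once freeness over $\Lambda_r$ is in hand the lemma is immediate.
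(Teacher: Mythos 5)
Your proposal is correct in substance and rests on the same underlying structure as the paper's proof — the factorisation of $Y_{\mathrm{univ}}$ over $Y_{r}$ with fibre $S_{r}(\mathbb{Z}_{p})$, together with the observation that $I^{(\chi)}_{r} = \ker(\chi^{\dagger}\colon \Lambda_{r}\to\mathcal{O})$ — but you package the decisive step differently. The paper argues elementwise: it realises $\mathscr{C}_{r}(\lambda+\chi)$ inside $\mathscr{C}_{\mathrm{univ}}(\lambda)$ as the functions transforming by $\chi$ under $L^{\mathrm{der}}_{r}$, verifies the claim for measures which are split tensors by pairing against test functions of the form $\chi\tilde{g}$, and then treats a general $\mu$ by expanding in a Banach basis of split tensors. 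You instead let the module structure do the work: $\mathcal{O}[[Y_{\mathrm{univ}}]]\cong \Lambda_{r}\widehat{\otimes}_{\mathcal{O}}\mathcal{O}[[Y_{r}]]$ is free over $\Lambda_{r}$ in the completed sense, the map to $\mathbb{D}_{r}(\lambda+\chi)$ is specialisation along $\chi^{\dagger}$, and the kernel of specialising a free module is $I^{(\chi)}_{r}$ times it. This is arguably the cleaner way to organise the paper's final step (whose ``apply the above argument to each term'' is loose, since individual terms of an expansion of $\mu$ need not pair to zero with every $g$), and your $\chi$-twist bookkeeping via $W_{\lambda+\chi,\mathcal{O}} \cong W_{\lambda,\mathcal{O}}\otimes\chi$ is consistent with the paper's identifications.

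One inaccuracy needs repair: $\mathbb{D}_{\mathrm{univ}}(\lambda)$ is \emph{not} finitely generated over $\Lambda_{r}$. The factor $\bar{N}_{1}$ of $Y_{r}$ is an infinite profinite group whenever $N_{G}\neq 1$, so $\mathcal{O}[[Y_{\mathrm{univ}}]]$ is only a completed free $\Lambda_{r}$-module of infinite rank, namely $\Lambda_{r}\widehat{\otimes}_{\mathcal{O}}\mathcal{O}[[Y_{r}]]$ after choosing a continuous section of $Y_{\mathrm{univ}}\to Y_{r}$ (which exists because the $S_{r}(\mathbb{Z}_{p})$-action is free and all spaces are profinite). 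Consequently ``kernel of reduction $= I^{(\chi)}_{r}\cdot\mathbb{D}_{\mathrm{univ}}(\lambda)$'' is not a purely formal consequence of freeness: the kernel of $\Lambda_{r}\widehat{\otimes}\,\mathcal{O}[[Y_{r}]]\to\mathcal{O}^{(\chi)}\widehat{\otimes}\,\mathcal{O}[[Y_{r}]]$ is $I^{(\chi)}_{r}\widehat{\otimes}\,\mathcal{O}[[Y_{r}]]$, and one must still identify this with $I^{(\chi)}_{r}\cdot\bigl(\Lambda_{r}\widehat{\otimes}\,\mathcal{O}[[Y_{r}]]\bigr)$. This does hold, because $I^{(\chi)}_{r}$ is generated by the finite regular sequence $([s^{(r)}_{i}]-\chi(s^{(r)}_{i}))_{i}$ and the modules are compact, so the surjection $\Lambda_{r}^{m}\twoheadrightarrow I^{(\chi)}_{r}$ stays surjective after $\widehat{\otimes}\,\mathcal{O}[[Y_{r}]]$ (an inverse limit of surjections of finite modules is surjective); but this compactness point should be stated, since it is exactly where the completed tensor product could in principle cause trouble. (The paper's own proof quietly relies on the same fact — closedness of $I^{(\chi)}_{r}\mathbb{D}_{\mathrm{univ}}(\lambda)$ — when it sums its Banach-basis expansion.) With that adjustment your argument goes through.
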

\begin{proof}
The space $\mathscr{C}_{r}(\lambda + \chi)$ can be realised as the subspace of $\mathscr{C}_{\mathrm{univ}}(\lambda)$ satisfying $(\ell \cdot f)(x) = \chi(\ell)f(x)$ for all $\ell \in  L_{r}^{\mathrm{der}}$. 
This inclusion dualises to the restriction map 
$$
    \mathbb{D}_{\mathrm{univ}}(\lambda) \to \mathbb{D}_{r}(\lambda + \chi).
$$
We can identify $ \mathscr{C}_{\mathrm{univ}}(\lambda)$ with the space of continuous $W_{\lambda, \mathcal{O}}$-valued functions on $Y_{\mathrm{univ}}$ by sending 
$$
    f \mapsto (\tilde{f}: x \mapsto x^{-1}f(x)).
$$
If for $\ell \in L_{G}(\mathbb{Z}_{p})$ we set $(\ell \cdot \tilde{f})(x) = \tilde{f}(\ell^{-1}x)$ then this isomorphism is equivariant for the action of $\Lambda_{0}$. Under this isomorphism $\mathscr{C}_{r}(\lambda + \chi)$ is the space of functions $g$ satisfying $g(\ell x) = \chi(\ell)g( x)$ for all $\ell \in L_{r}^{\mathrm{der}}$. This allows us to write $g \in \mathscr{C}_{r}(\lambda + \chi)$  as a product $\chi\tilde{g}$
where 
\begin{align*}
\tilde{g}: \bar{N}_{1} \times L_{r}^{\mathrm{der}} \backslash L_{G}(\mathbb{Z}_{p}) &\to \mathscr{C}_{\mathrm{alg}}^{L_{G}}(\lambda) \\
\chi: L_{G}^{\mathrm{der}}(\mathbb{Z}_{p}) \backslash L_{r}^{\mathrm{der}} &\to \mathcal{O}^{\times}.
\end{align*}

Note that $I_{r}^{(\chi)}$ is the kernel of the map 
$$
    \mu \mapsto \int_{L_{G}^{\mathrm{der}} \backslash L_{r}^{\mathrm{der}}}\chi(x)\mu(x).
$$ 

Suppose $\mu \in \mathbb{D}_{\mathrm{univ}}(\lambda)$ restricts to zero and suppose first that $\mu$ is a non-zero tensor $\mu = \mu_{1} \otimes \mu_{2}$. Then for $g \in  \mathscr{C}_{r}(\lambda + \chi)$
\begin{align*}
    \int_{Y_{\mathrm{univ}}}g(y)\mu(y) &= \int_{L_{G}^{\mathrm{der}} \backslash L_{r}^{\mathrm{der}}}\chi(\ell)\mu_{2}(\ell) \cdot \int_{\bar{N}_{1}\times L_{r}^{\mathrm{der}} \backslash L_{G}}\tilde{g}(\bar{n}\ell)\mu_{1}(\bar{n}\ell) \\
    &= 0
\end{align*}
for all $g$, which implies $\mu \in I_{r}^{(\chi)}\mathbb{D}_{\mathrm{univ}}(\lambda)$. In general we can see from \eqref{eq:11} that the space $\mathbb{D}_{\mathrm{univ}}(\lambda)$ has a Banach basis $\{\mu^{(i)}\}$ where each $\mu^{(i)}$ splits as a tensor product $\mu^{(i)} = \mu^{(i)}_{1} \otimes \mu^{(i)}_{2}$ so for general $\mu$ we can expand as a power series in $\mu^{(i)}$ and apply the above argument to each term to see that $\mu \in I_{r}^{(\chi)}\mathbb{D}_{\mathrm{univ}}(\lambda)$.

The converse is easy.
\end{proof}
 The second quasi-isomorphism follows from the following few lemmas. The next lemma is a variation of a lemma which appears frequently in papers on Hida theory, for example \cite[Proposition 4.1]{hidacontrol}, \cite[Lemma 3.1]{tilurb}, and shall be henceforth known as the `Hida lemma'.
\begin{lemma}
For $s \geq r$ let $M$ be a compact $\Gamma_{1,r}$-module with a compatible action of $\tau^{-1}$. The following diagram commutes on cohomology
$$
\begin{tikzcd}
\mathscr{C}^{\bullet}(\Gamma_{1,r} \cap \Gamma_{0,s}, M) \arrow[d, "\mathcal{T}^{s - r}"] \arrow[r, "\mathrm{cores}"] & \mathscr{C}^{\bullet}(\Gamma_{1,r}, M) \arrow[dl, "\tau^{-(s - r)}"] \arrow[d, "\mathcal{T}^{s - r}"] \\
\mathscr{C}^{\bullet}(\Gamma_{1,r} \cap \Gamma_{0,s}, M) \arrow[r, "\mathrm{cores}"'] & \mathscr{C}^{\bullet}(\Gamma_{1,r}, M)
\end{tikzcd}
$$
\end{lemma}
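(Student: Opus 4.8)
The plan is to prove the two triangles in the diagram separately; the outer square is then their composite, read as $\mathrm{cores}\circ[\tau^{-(s-r)}]\circ\mathrm{cores}$ in two ways. Throughout I fix a compatible system of finite free resolutions together with the homotopy equivalences $\delta$, so that $\mathrm{res}$, $\mathrm{cores}$, the operators $[\alpha]$, and hence $\mathcal{T}$, are all realised as honest maps of cochain complexes; the diagram will commute up to coherent homotopy, which is why the statement concerns only cohomology.

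The first step is to record that $\tau^{-1}$ is contracting for the Iwahori-type factorisations $V_{1,r} = \bar N_r L_r^{\mathrm{der}} N_0$ and $V_{1,r}\cap V_{0,s} = \bar N_s L_r^{\mathrm{der}} N_0$ — it shrinks the $\bar N$-part and expands the $N$-part — so that the corresponding double cosets multiply without collision and $\mathcal{T}^{s-r} = [\Gamma'\,\tau^{-(s-r)}\,\Gamma']$ on $\mathscr{C}^\bullet(\Gamma', M)$ both for $\Gamma' = \Gamma_{1,r}$ and for $\Gamma' = \Gamma_{1,r}\cap\Gamma_{0,s}$. Next I would write down the relevant coset decompositions: the Iwahori factorisation gives $[\Gamma_{1,r} : \Gamma_{1,r}\cap\Gamma_{0,s}] = [\bar N_r : \bar N_s]$ with $\mathrm{cores}$-representatives drawn from $\bar N_s\backslash\bar N_r$; the double coset $(\Gamma_{1,r}\cap\Gamma_{0,s})\,\tau^{-(s-r)}\,(\Gamma_{1,r}\cap\Gamma_{0,s})$ has a coset decomposition indexed by $\bar N_{2s-r}\backslash\bar N_s$; and $\Gamma_{1,r}\,\tau^{-(s-r)}\,\Gamma_{1,r}$ has one indexed by $N_{s-r}\backslash N_0$. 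The key geometric input — the single place where strict dominance of $\eta$ is really used — is that conjugation by $\tau^{-(s-r)}$ centralises $L_G$ and merely rescales the relative root groups (contracting $\bar N_G$, expanding $N_G$), so that
$$
[\bar N_r : \bar N_s] \;=\; [\bar N_s : \bar N_{2s-r}] \;=\; [N_0 : N_{s-r}] \;=\; \prod_{\Phi > 0} p^{(s-r)\langle\eta,\Phi\rangle},
$$
and these identifications are exactly what makes $[\tau^{-(s-r)}]$ carry the $\mathrm{cores}$-decomposition onto the $\mathcal{T}^{s-r}$-decomposition.

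With the representatives matched up, each triangle reduces to Mackey's double-coset formula. Expanding, say, $[\tau^{-(s-r)}]\circ\mathrm{cores}^{\Gamma_{1,r}}_{\Gamma_{1,r}\cap\Gamma_{0,s}}$ through $\mathrm{res}\circ\mathrm{cores}$, the double-coset space that occurs — built from $\tau^{s-r}(\Gamma_{1,r}\cap\Gamma_{0,s})\tau^{-(s-r)}$, $\Gamma_{1,r}$ and $\Gamma_{1,r}\cap\Gamma_{0,s}$ — is a single class, since by the Iwahori factorisation the $\bar N$-part and the $N$-part of an element can be absorbed from opposite sides; the composite therefore collapses to exactly the sum of single-coset operators $[\tau^{-(s-r)}u]$ that computes $\mathcal{T}^{s-r}$, and the other triangle is symmetric. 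I expect the main obstacle to be precisely this reduction: carrying the homotopies $\delta$ coherently through the Mackey manipulation so that the two sides agree as chain maps up to a prescribed homotopy rather than on the nose — which is exactly why the conclusion is only claimed on cohomology. A secondary, routine point is that the coset representatives must be chosen inside $\Gamma$ with the correct congruences away from $p$, which is where the prime-to-$p$ hypothesis on the tame level is used.
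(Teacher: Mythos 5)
Your proposal is correct and takes essentially the same route as the paper: prove each triangle separately, use the Iwahori factorisation and strict dominance of $\eta$ to identify the corestriction representatives for $\Gamma_{1,r}/(\Gamma_{1,r}\cap\Gamma_{0,s})\cong \bar{N}_{r}/\bar{N}_{s}$ with their $\tau^{-(s-r)}$-conjugates indexing $\mathcal{T}^{s-r}$ at the intersection level, and observe that the only discrepancy is the choice-of-resolution homotopy equivalences $\delta$, which is exactly why the statement is only asserted on cohomology. The paper just writes both composites out as compatible pairs $(\phi,f)$ and compares them directly, whereas you package the same matching as a Mackey-type collapse of $\mathrm{res}\circ\mathrm{cores}$ over a single double coset; this is the identical computation in different wrapping.
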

\begin{proof}
We do the proof for the top triangle, the bottom triangle being similar. We first note that 
$$
    (\Gamma_{1,r} \cap \Gamma_{0,s}) \cap \tau^{-(s - r)} (\Gamma_{1,r} \cap \Gamma_{0,s} )\tau^{s - r} = \bar{N}_{2s - r}L_{r}N_{0}
$$
so that when computing $\mathcal{T}^{s - r}$ our corestriction will sum over representatives for $\bar{N}_{s}/\bar{N}_{2s - r}$. Then
$$
    \Gamma_{1,r} / \Gamma_{1,r} \cap \Gamma_{0,s} = \bar{N}_{r}/\bar{N}_{s},
$$
and so if $\gamma_{i}'$ is s set of representatives for $  \Gamma_{1,r} / \Gamma_{1,r} \cap \Gamma_{0,s}$, then $\gamma_{i} := \tau^{-(s - r)}\gamma_{i}'\tau^{s - r}$ is a set of representatives for $\bar{N}_{s}/\bar{N}_{2s - r}$.

The Hecke operator $\mathcal{T}$ is given on complexes by the composition of pairs
\begin{align*}
    (\delta_{1}, \sum_{i} \gamma_{i}) \circ (\tau^{-1}(\cdot)\tau, \tau^{-1}) \circ (\iota, \mathrm{id}) &= (\tau^{-1}\delta_{1}(\cdot)\tau, \sum \gamma_{i}\tau^{-1})
\end{align*}
where $\delta_{1}$ is the canonical homotopy equivalence
$$
\mathscr{C}_{\bullet}( (\Gamma_{1,r} \cap \Gamma_{0,s}) \cap \tau (\Gamma_{1,r} \cap \Gamma_{0,s} )\tau^{-1}) \to \mathscr{C}_{\bullet}(\Gamma_{1,r} \cap \Gamma_{0,s})
$$
extending the identity and abuse notation by writing corestriction as a compatible pair,
and $\tau^{-1} \circ \mathrm{cores}$ is given by 
\begin{align*}
    (\tau^{-1}(\cdot)\tau, \tau^{-1}) \circ (\delta_{2}, \sum \gamma_{i}') &= (\delta_{2}(\tau^{-1}(\cdot)\tau), \sum \tau^{-1}\gamma_{i}') \\
    &= (\delta_{2}(\tau^{-1}(\cdot)\tau), \sum \gamma_{i}\tau^{-1}),
\end{align*}
where $\delta_{2}$ is the canonical homotopy equivalence of $\Gamma_{1,r} \cap \Gamma_{0,s}$-complexes
$$
   \mathscr{C}_{\bullet}(\Gamma_{1,r} \cap \Gamma_{0,s}) \to \mathscr{C}_{\bullet}(\Gamma_{1,r}).
$$
The maps $\delta_{i}$ induce the identity on cohomology so the result follows.
\end{proof}
The upshot of this lemma is that the restriction of the above corestriction map to the ordinary subspace is a quasi-isomorphism, so we have a quasi-isomorphism
$$
    \mathscr{C}^{\bullet}(\Gamma_{1,r} \cap \Gamma_{0,s}, M)^{\mathrm{ord}} \cong  \mathscr{C}^{\bullet}(\Gamma_{1,r}, M)^{\mathrm{ord}}.
$$

The following proposition will allow us to attack our problem using the Hida lemma in conjunction with Shapiro's lemma. Let $V_{\lambda + \chi, s} := V_{\lambda + \chi, \mathcal{O}} \otimes \mathcal{O}_{s}$, $W_{\lambda, s} := W_{\lambda, \mathcal{O}} \otimes \mathcal{O}_{s}$.

\begin{proposition} \label{prop:6}
Let $s \geq r$. There are isomorphisms 
\begin{align*}
    \mathscr{C}^{\bullet}(\Gamma_{1,r} \cap \Gamma_{0,s}, V_{\lambda + \chi, s})^{\mathrm{ord}} &\cong  \mathscr{C}^{\bullet}(\Gamma_{1,r} \cap \Gamma_{0,s}, W_{\lambda , s}(\chi))^{\mathrm{ord}} \\
      \mathscr{C}^{\bullet}(\Gamma_{0,s}, \mathbb{D}_{r}(\lambda; p^{s}))^{\mathrm{ord}} & \cong  \mathscr{C}^{\bullet}(\Gamma_{0,s}, \tilde{\mathbb{D}}_{r}(\lambda; p^{s}))^{\mathrm{ord}}
\end{align*}
\end{proposition}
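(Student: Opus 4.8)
The plan is to deduce both isomorphisms from a single mechanism. In each case one coefficient module sits in a short exact sequence $0\to A\to B\to C\to 0$ of $(\Gamma,\tau^{-1})$-modules, finite over some $\mathcal{O}_{s}$, with the property that $\mathcal{T}^{s}$ annihilates $\mathscr{C}^{\bullet}(\Gamma, C)$; since $\mathscr{C}_{\bullet}(\Gamma)$ consists of finite free $\mathcal{O}[\Gamma]$-modules the induced sequence of cochain complexes is still exact, and since the ordinary projector $e=\lim\mathcal{T}^{n!}$ is a natural idempotent, hence exact on such sequences, and kills $\mathscr{C}^{\bullet}(\Gamma,C)$, one obtains $\mathscr{C}^{\bullet}(\Gamma,A)^{\mathrm{ord}}\xrightarrow{\ \sim\ }\mathscr{C}^{\bullet}(\Gamma,B)^{\mathrm{ord}}$. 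I would isolate two sufficient conditions for $\mathcal{T}^{s}$ to vanish on $\mathscr{C}^{\bullet}(\Gamma,C)$: either the normalised operator $\tau^{-1}*$ is divisible by $p$ on $C$ — so that $\mathcal{T}=\mathrm{cores}\circ[\tau^{-1}]\circ\mathrm{res}$, which feeds exactly one copy of $\tau^{-1}*$ and is otherwise $\mathcal{O}_{s}$-linear, raises the $p$-adic valuation of cochains — or $C$ is a space of distributions on a $p$-adic space and the pushforward encoded by $\tau^{-1}*$ contracts the support into a subset that becomes a single point modulo $p^{s}$ after $s$ iterations.

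For the first isomorphism I would put on $V_{\lambda+\chi,\mathcal{O}}$ the grading by $\eta$-weights (eigenspaces of $\tau=\eta(p)$), with top weight $h:=\langle\eta,\lambda+\chi\rangle$, and identify the top graded piece with the $N_{G}$-invariants $V_{\lambda+\chi,\mathcal{O}}^{N_{G}}$, which is an irreducible $L_{G}$-subrepresentation of highest weight $\lambda+\chi$, i.e. $W_{\lambda,\mathcal{O}}(\chi)$. The point requiring care here is that the induced $\mathcal{O}$-lattice is the minimal admissible lattice $W_{\lambda,\mathcal{O}}(\chi)$ and not merely some lattice; this I would extract by unwinding the two presentations of $\mathscr{C}^{G}_{\mathrm{alg}}(\lambda+\chi)$ and that of $\mathscr{C}^{L_{G}}_{\mathrm{alg}}(\lambda)$ recorded in Section~\ref{sec:2}, via restriction of functions from $G$ to $Q_{G}$ and the evaluation-at-$1$ map. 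Because $\bar N_{s}\equiv 1\pmod{p^{s}}$, the group $\Gamma_{1,r}\cap\Gamma_{0,s}$ acts on $V_{\lambda+\chi,s}$ through $Q_{G}(\mathbb{Z}/p^{s}\mathbb{Z})$, so $0\to W_{\lambda,s}(\chi)\to V_{\lambda+\chi,s}\to V_{\lambda+\chi,s}/W_{\lambda,s}(\chi)\to 0$ is a short exact sequence of $(\Gamma_{1,r}\cap\Gamma_{0,s},\tau^{-1})$-modules whose quotient is concentrated in $\eta$-weights $\le h-1$. Since $\tau^{-1}*=p^{h}\tau^{-1}$ acts on the $\eta$-weight-$j$ piece by $p^{h-j}$ — by the identity on the top piece and by a multiple of $p$ on the quotient — the first sufficient condition applies and yields the isomorphism.

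For the second isomorphism I would combine \eqref{eq:11} with the definition $\tilde{\mathbb{D}}_{r}(\lambda;p^{s})=\mathrm{Ind}_{\Gamma_{1,r}\cap\Gamma_{0,s}}^{\Gamma_{0,s}}W_{\lambda,s}$ and the product decomposition $Y_{r}(p^{s})\cong (L^{\mathrm{der}}_{r}\backslash L_{G}(\mathbb{Z}/p^{s}\mathbb{Z}))\times\bar N_{1}(\mathbb{Z}/p^{s}\mathbb{Z})$ to identify $\tilde{\mathbb{D}}_{r}(\lambda;p^{s})$ with the submodule $\mathbb{D}'_{r}(\lambda;p^{s})\subseteq\mathbb{D}_{r}(\lambda;p^{s})$ of distributions supported on the slice where the $\bar N_{1}$-coordinate equals $1$. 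This submodule is $\Gamma_{0,s}$-stable because modulo $p^{s}$ the group $\Gamma_{0,s}$ acts on $Y_{r}(p^{s})$ through $L_{G}N_{0}$, which fixes the $\bar N_{1}$-coordinate, and it is $\tau$-stable since $(\ell,1)*\tau=(\ell,1)$. The coefficient part of $\mathcal{T}$ includes the pushforward along $y\mapsto y*\tau$, which on the $\bar N_{1}$-coordinate is conjugation by $\tau^{-1}$; after $s$ iterations this lands inside $\tau^{-s}\bar N_{1}\tau^{s}=\bar N_{s+1}\equiv\{1\}\pmod{p^{s}}$, so $\mathcal{T}^{s}$ annihilates $\mathscr{C}^{\bullet}(\Gamma_{0,s},\mathbb{D}_{r}(\lambda;p^{s})/\mathbb{D}'_{r}(\lambda;p^{s}))$. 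The second sufficient condition then applies to $0\to\mathbb{D}'_{r}(\lambda;p^{s})\to\mathbb{D}_{r}(\lambda;p^{s})\to\mathbb{D}_{r}(\lambda;p^{s})/\mathbb{D}'_{r}(\lambda;p^{s})\to 0$ and gives the isomorphism.

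The genuinely delicate steps, as opposed to the soft structure above, are: (i) the lattice normalisation in the first isomorphism, namely that the $N_{G}$-invariants of the minimal admissible lattice $V_{\lambda+\chi,\mathcal{O}}$ form exactly $W_{\lambda,\mathcal{O}}(\chi)$ — an assertion about integral structures, not just about representations over $K$; and (ii) the coordinate bookkeeping in the second isomorphism, in particular verifying that reduction modulo $p^{s}$ kills the $\bar N_{s}$-part of $\Gamma_{0,s}$ so that its action on $Y_{r}(p^{s})$ leaves the $\bar N_{1}$-coordinate untouched, which is precisely what makes $\mathbb{D}'_{r}(\lambda;p^{s})$ a Hecke-submodule isomorphic to $\tilde{\mathbb{D}}_{r}(\lambda;p^{s})$. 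I expect (ii) to be the main obstacle to write out carefully; everything else is the standard machinery behind the Hida lemma.
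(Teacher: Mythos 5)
Your proposal is correct and is essentially the paper's own argument: in both cases one splits off the coefficient piece carrying $W_{\lambda,s}(\chi)$ (resp.\ $\tilde{\mathbb{D}}_{r}(\lambda;p^{s})$) and shows that high powers of $\mathcal{T}$ annihilate the complementary piece modulo $p^{s}$ — the paper realises $W_{\lambda,s}(\chi)$ as the quotient of $V_{\lambda+\chi,s}$ dual to the inclusion $\mathscr{C}^{L_{G}}_{\mathrm{alg}}(\lambda;p^{s})(\chi^{-1})\hookrightarrow \mathscr{C}^{G}_{\mathrm{alg}}(\lambda+\chi;p^{s})$ and kills the kernel, while you realise it as the top $\eta$-weight submodule ($N_{G}$-invariants) and kill the quotient, which is the same thing via the $\eta$-grading splitting of the admissible lattice. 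The one phrase to tighten is in your second step: right translation by $L_{G}N_{0}$ does not fix the $\bar N_{1}$-coordinate of a general point of $Y_{r}(p^{s})$ (the $L_{G}$-part conjugates it and the $N_{0}$-part refactors it); what is true, and all your argument needs, is that it preserves the slice $\{\bar n=1\}$, so that $\mathbb{D}'_{r}(\lambda;p^{s})$ is indeed a $\Gamma_{0,s}$- and $\tau$-stable submodule identified with $\tilde{\mathbb{D}}_{r}(\lambda;p^{s})$.
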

\begin{proof}
We prove the first isomorphism, the second being not dissimilar. Consider the map 
$$
      \mathscr{C}^{\bullet}(\Gamma_{1,r} \cap \Gamma_{0,s}, V_{\lambda + \chi, s})^{\mathrm{ord}} \to \mathscr{C}^{\bullet}(\Gamma_{1,r} \cap \Gamma_{0,s}, W_{\lambda, s}(\chi))^{\mathrm{ord}} 
$$
induced by the dual of the inclusion of $L_{G}$-modules 
$$
   \mathscr{C}_{\mathrm{alg}}^{L_{G}}(\lambda; p^{s})(\chi^{-1}) \hookrightarrow \mathscr{C}_{\mathrm{alg}}^{G}(\lambda + \chi; p^{s}) .
$$
Regular functions on $G$ are uniquely defined by their restriction to the big Bruhat cell $\bar{N}_{G}L_{G}N_{G}$ and we can view the image of $   \mathscr{C}_{\mathrm{alg}}^{L_{G}}(\lambda)(\chi^{-1})$ in $\mathscr{C}_{\mathrm{alg}}^{G}(\lambda + \chi)$ under the above isomorphism as being the functions $f$ whose restriction to the big Bruhat cell satisfy $f(\bar{n}\ell n) = f(\bar{n}\ell)$. 
The kernel $\mathcal{K}_{s}$ of the dual map is given by functionals
$$
\mathscr{C}_{\mathrm{alg}}^{G}(\lambda + \chi; p^{s})  \to \mathcal{O}_{s}
$$
whose restriction to $ \mathscr{C}_{\mathrm{alg}}^{L_{G}}(\lambda; p^{s})(\chi^{-1})$ is zero. We want to show that 
$$
    \mathscr{C}^{i}(\Gamma_{1,r} \cap \Gamma_{0,s}, \mathcal{K}_{s})^{\mathrm{ord}} = 0.
$$

Let $z \in  \mathscr{C}^{i}(\Gamma_{1,r} \cap \Gamma_{0,s}, V_{\lambda + \chi, s})$, then for $c \in \mathscr{C}_{i}(\Gamma)$ there are functionals $\phi^{(c)}_{i} \in V_{\lambda + \chi, s}$ such that 
$$
    (\mathcal{T}^{k}z)(c) = \sum \gamma_{i}\tau^{-k}*\phi^{(c)}_{i},
$$
so via the contragredient representation it suffices to show that for $f \in \mathscr{C}_{\mathrm{alg}}^{G}(\lambda + \chi)$ and $k \gg 0$ we have $(\tau^{k} * f)(\bar{n}\ell n) = (\tau^{k} * f)(\bar{n}\ell)$, but $\tau_{-k}\bar{N}_{0}\tau^{k} = \bar{N}_{k}$, so for $k \geq s$
$$
    \tau^{-k}\bar{N}_{0}\tau^{k} \equiv 1 \ \mathrm{mod} \ p^{s},
$$
whence we are done.
\end{proof}
\begin{lemma} \label{lem:5}
For $r \geq 1$ there is a quasi-isomorphism
$$
    \mathscr{C}^{\bullet}(\Gamma_{0,1}, \mathbb{D}_{r}(\lambda + \chi))^{\mathrm{ord}} \cong \mathscr{C}^{\bullet}(\Gamma_{1,r} ,   V_{\lambda + \chi,\mathcal{O}})^{\mathrm{ord}}.
$$
\end{lemma}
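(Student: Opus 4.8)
The plan is to reduce the statement to finite level, assemble a zig-zag of quasi-isomorphisms out of the Hida lemma, Shapiro's lemma and Proposition~\ref{prop:6}, and then pass to the inverse limit over $s$. Throughout we use that $\lambda+\chi$ is dominant for $B_{L_{G}}$ (it is dominant for $B_{G}$), so that Proposition~\ref{prop:6} and the identity $\tilde{\mathbb{D}}_{r}(\lambda+\chi;p^{s}) = \mathrm{Ind}_{\Gamma_{1,r}\cap\Gamma_{0,s}}^{\Gamma_{0,s}}W_{\lambda+\chi,s}$ may be applied with weight $\lambda+\chi$, together with the identification $W_{\lambda,s}(\chi)\cong W_{\lambda+\chi,s}$ of $L_{G}$-representations (twisting the highest-weight representation $W_{\lambda}$ by the character $\chi$).

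First I would pass to finite level on both sides. Since $\mathscr{C}_{\bullet}(\Gamma_{0,1})$ is a bounded complex of finite free modules and $\mathbb{D}_{r}(\lambda+\chi) = \varprojlim_{s}\mathbb{D}_{r}(\lambda+\chi;p^{s})$, the Hom-complex commutes with the limit; as the ordinary idempotent $e = \lim_{n}\mathcal{T}^{n!}$ is defined compatibly over the whole tower (each $\mathscr{C}^{i}(\Gamma_{0,1},\mathbb{D}_{r}(\lambda+\chi;p^{s}))$ being a finite $\mathcal{O}_{s}$-module), we get $\mathscr{C}^{\bullet}(\Gamma_{0,1},\mathbb{D}_{r}(\lambda+\chi))^{\mathrm{ord}} = \varprojlim_{s}\mathscr{C}^{\bullet}(\Gamma_{0,1},\mathbb{D}_{r}(\lambda+\chi;p^{s}))^{\mathrm{ord}}$, and likewise $\mathscr{C}^{\bullet}(\Gamma_{1,r},V_{\lambda+\chi,\mathcal{O}})^{\mathrm{ord}} = \varprojlim_{s}\mathscr{C}^{\bullet}(\Gamma_{1,r},V_{\lambda+\chi,s})^{\mathrm{ord}}$ using $V_{\lambda+\chi,\mathcal{O}} = \varprojlim_{s}V_{\lambda+\chi,s}$. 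Each term in both towers is a bounded complex of finite $\mathcal{O}_{s}$-modules, so the relevant $\varprojlim^{1}$ vanish and $\varprojlim_{s}$ is exact on these systems; hence it suffices to produce, for every $s\geq r$, a quasi-isomorphism between the two finite-level complexes that is natural in $s$.

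For fixed $s\geq r$ I would then chain together: (1) $\mathrm{cores}\colon\mathscr{C}^{\bullet}(\Gamma_{0,s},\mathbb{D}_{r}(\lambda+\chi;p^{s}))^{\mathrm{ord}}\xrightarrow{\sim}\mathscr{C}^{\bullet}(\Gamma_{0,1},\mathbb{D}_{r}(\lambda+\chi;p^{s}))^{\mathrm{ord}}$, which is a quasi-isomorphism by the same argument as the Hida lemma applied to the pair $\Gamma_{0,s}\subset\Gamma_{0,1}$ (whose quotient is $\bar N_{1}/\bar N_{s}$, contracted by conjugation by $\tau^{-1}$); (2) the second isomorphism of Proposition~\ref{prop:6}, $\mathscr{C}^{\bullet}(\Gamma_{0,s},\mathbb{D}_{r}(\lambda+\chi;p^{s}))^{\mathrm{ord}}\cong\mathscr{C}^{\bullet}(\Gamma_{0,s},\tilde{\mathbb{D}}_{r}(\lambda+\chi;p^{s}))^{\mathrm{ord}}$; (3) Shapiro's lemma for $\tilde{\mathbb{D}}_{r}(\lambda+\chi;p^{s}) = \mathrm{Ind}_{\Gamma_{1,r}\cap\Gamma_{0,s}}^{\Gamma_{0,s}}W_{\lambda+\chi,s}$, giving $\mathscr{C}^{\bullet}(\Gamma_{0,s},\tilde{\mathbb{D}}_{r}(\lambda+\chi;p^{s}))^{\mathrm{ord}}\cong\mathscr{C}^{\bullet}(\Gamma_{1,r}\cap\Gamma_{0,s},W_{\lambda+\chi,s})^{\mathrm{ord}}$; (4) the first isomorphism of Proposition~\ref{prop:6} together with $W_{\lambda+\chi,s}\cong W_{\lambda,s}(\chi)$, giving $\mathscr{C}^{\bullet}(\Gamma_{1,r}\cap\Gamma_{0,s},W_{\lambda+\chi,s})^{\mathrm{ord}}\cong\mathscr{C}^{\bullet}(\Gamma_{1,r}\cap\Gamma_{0,s},V_{\lambda+\chi,s})^{\mathrm{ord}}$; (5) $\mathrm{cores}\colon\mathscr{C}^{\bullet}(\Gamma_{1,r}\cap\Gamma_{0,s},V_{\lambda+\chi,s})^{\mathrm{ord}}\xrightarrow{\sim}\mathscr{C}^{\bullet}(\Gamma_{1,r},V_{\lambda+\chi,s})^{\mathrm{ord}}$ by the Hida lemma. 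Composing (inverting the two $\mathrm{cores}$ maps) gives the desired quasi-isomorphism at level $s$; chasing definitions through the canonical homotopy equivalences $\delta$ of Section~\ref{sec:1b} shows every arrow is natural in $s$, so the composite is compatible with the transition maps, and $\varprojlim_{s}$ yields the lemma.

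The main obstacle will be step (3): Shapiro's lemma must be upgraded from a statement about cohomology to one about the full chain complexes that is equivariant for the Hecke operator $\mathcal{T}$, and one must check the correct $p$-power normalization of the $\tau$-action on $W_{\lambda+\chi,s}$ (the $*$-action with $h_{\lambda+\chi} = \langle\eta,\lambda+\chi\rangle$) so that the isomorphism carries ordinary parts to ordinary parts; the geometric input is again that conjugation by $\tau^{-1}$ contracts the relevant unipotent subgroups mod $p^{s}$, exactly as in the proof of Proposition~\ref{prop:6}. A secondary technical point is arranging strict (rather than merely up-to-homotopy) compatibility of all five maps with the transition maps in $s$, which is handled by fixing the homotopy equivalences $\delta$ coherently once and for all.
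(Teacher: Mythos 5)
Your proposal is correct and follows essentially the same route as the paper: pass to the finite-level modules $\mathbb{D}_{r}(\lambda+\chi;p^{s})$, apply the Hida lemma to move between $\Gamma_{0,1}$ (resp.\ $\Gamma_{1,r}$) and $\Gamma_{0,s}$ (resp.\ $\Gamma_{1,r}\cap\Gamma_{0,s}$), use both isomorphisms of Proposition \ref{prop:6} and Shapiro's lemma for $\tilde{\mathbb{D}}_{r}(\lambda+\chi;p^{s})$, and then take $\varprojlim_{s}$. Your extra remarks on the identification $W_{\lambda+\chi,s}\cong W_{\lambda,s}(\chi)$ and on compatibility with transition maps in $s$ are refinements of details the paper leaves implicit, not a different argument.
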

\begin{proof}
We have the following chain of quasi-isomorphism
\begin{align*}
    \mathscr{C}^{\bullet}(\Gamma_{0,1}, \mathbb{D}_{r}(\lambda + \chi))^{\mathrm{ord}} &\cong  \varprojlim_{s}  \mathscr{C}^{\bullet}(\Gamma_{0,1}, \mathbb{D}_{r}(\lambda + \chi; p^{s}))^{\mathrm{ord}} \\
    &\cong \varprojlim_{s} \mathscr{C}^{\bullet}(\Gamma_{0,s}, \mathbb{D}_{r}(\lambda + \chi; p^{s}))^{\mathrm{ord}} \\
    &\cong \varprojlim_{s} \mathscr{C}^{\bullet}(\Gamma_{0,s}, \tilde{\mathbb{D}}_{r}(\lambda + \chi; p^{s}))^{\mathrm{ord}} \\
    &\cong \varprojlim_{s} \mathscr{C}^{\bullet}(\Gamma_{1,r} \cap \Gamma_{0,s}, W_{\lambda,s}(\chi))^{\mathrm{ord}} \\
    &\cong \varprojlim_{s} \mathscr{C}^{\bullet}(\Gamma_{1,r} \cap \Gamma_{0,s}, V_{\lambda + \chi,s})^{\mathrm{ord}} \\
    &\cong \varprojlim_{s} \mathscr{C}^{\bullet}(\Gamma_{1,r}, V_{\lambda + \chi,s})^{\mathrm{ord}} \\
    &\cong \mathscr{C}^{\bullet}(\Gamma_{1,r}, V_{\lambda + \chi, \mathcal{O}})^{\mathrm{ord}},
\end{align*}
where the second line is the Hida lemma, the fourth line is Shapiro's lemma and the sixth line is the Hida lemma again.
\end{proof}
The case $r = 0$ has the same proof with the Shapiro's lemma step being trivial.
\begin{corollary}
There is a quasi-isomorphism
$$
    \mathscr{C}^{\bullet}(\Gamma_{0,1}, \mathbb{D}_{\mathrm{univ}}(\lambda))^{\mathrm{ord}} \cong \varprojlim_{r} \mathscr{C}^{\bullet}(\Gamma_{1,r}, W_{\lambda, r})^{\mathrm{ord}}.
$$
\end{corollary}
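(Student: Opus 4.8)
\emph{Proof sketch.} The plan is to realise both sides as inverse limits indexed by a single integer and to compare them level by level using the finite-coefficient identifications already in hand. First I would record that
$$
\mathbb{D}_{\mathrm{univ}}(\lambda) = \varprojlim_{r}\mathbb{D}_{r}(\lambda) = \varprojlim_{r}\varprojlim_{s}\mathbb{D}_{r}(\lambda;p^{s}) = \varprojlim_{n}\mathbb{D}_{n}(\lambda;p^{n}),
$$
where the first equality comes from \eqref{eq:11}, \eqref{eq:12} and the identity $Y_{\mathrm{univ}} = \varprojlim_{r}Y_{r}$ (valid since $\bigcap_{r}L_{r}^{\mathrm{der}} = L_{G}^{\mathrm{der}}(\mathbb{Z}_{p})$, and $\mathcal{O}[[-]]$ carries a limit of profinite sets to the associated limit of measure modules), the second is the identity $\varprojlim_{s}\mathbb{D}_{r}(\lambda;p^{s}) = \mathbb{D}_{r}(\lambda)$, and the third uses that the diagonal $\{(n,n)\}$ is cofinal in the directed poset $\{(r,s):s\ge r\}$. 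The transition maps of the last tower are surjective (pushforward of measures along $Y_{r+1}\twoheadrightarrow Y_{r}$, composed with reduction mod $p$). Since $\mathscr{C}_{\bullet}(\Gamma_{0,1})$ is a bounded complex of finite free $\mathcal{O}[\Gamma_{0,1}]$-modules, $\mathscr{C}^{\bullet}(\Gamma_{0,1},-)$ commutes with this limit; and as the ordinary part is $\bigcap_{k}\mathcal{T}^{k}$ with $\mathcal{T}$ continuous and commuting with the (surjective) transition maps, a compactness argument — each $\mathscr{C}^{\bullet}(\Gamma_{0,1},\mathbb{D}_{n}(\lambda;p^{n}))$ being a complex of finite $\mathcal{O}_{n}$-modules with its own Pilloni idempotent $e_{n}$ — gives
$$
\mathscr{C}^{\bullet}(\Gamma_{0,1},\mathbb{D}_{\mathrm{univ}}(\lambda))^{\mathrm{ord}} = \varprojlim_{n}\mathscr{C}^{\bullet}(\Gamma_{0,1},\mathbb{D}_{n}(\lambda;p^{n}))^{\mathrm{ord}}.
$$

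Next, for each fixed $n$, I would run the first half of the proof of Lemma \ref{lem:5} — the Hida lemma, the second isomorphism of Proposition \ref{prop:6}, and Shapiro's lemma, none of which uses $G$-dominance of $\lambda$ — applied to $\mathbb{D}_{n}(\lambda;p^{n})$:
$$
\mathscr{C}^{\bullet}(\Gamma_{0,1},\mathbb{D}_{n}(\lambda;p^{n}))^{\mathrm{ord}} \sim \mathscr{C}^{\bullet}(\Gamma_{0,n},\mathbb{D}_{n}(\lambda;p^{n}))^{\mathrm{ord}} \sim \mathscr{C}^{\bullet}(\Gamma_{0,n},\tilde{\mathbb{D}}_{n}(\lambda;p^{n}))^{\mathrm{ord}} \sim \mathscr{C}^{\bullet}(\Gamma_{1,n}\cap\Gamma_{0,n},W_{\lambda,n})^{\mathrm{ord}},
$$
the last step coming from $\tilde{\mathbb{D}}_{n}(\lambda;p^{n}) = \mathrm{Ind}_{\Gamma_{1,n}\cap\Gamma_{0,n}}^{\Gamma_{0,n}}W_{\lambda,n}$. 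The point of specialising along the diagonal $r = s = n$ is that $\Gamma_{1,n}\cap\Gamma_{0,n} = \Gamma_{1,n}$ (since $V_{1,n}\subset V_{0,n}$), so the chain terminates exactly at $\mathscr{C}^{\bullet}(\Gamma_{1,n},W_{\lambda,n})^{\mathrm{ord}}$ — with no further Hida step and, crucially, without having to replace $W_{\lambda,n}$ by $V_{\lambda,n}$, the latter being unavoidable in Lemma \ref{lem:5} precisely because $W_{\lambda,s}$ fails to be a $\Gamma_{1,r}$-module when $s>r$. All the maps in play (corestriction, the maps of Proposition \ref{prop:6}, Shapiro's isomorphism) are functorial in the coefficients, hence compatible with the transition maps in $n$, so passing to $\varprojlim_{n}$ and invoking the first paragraph yields
$$
\mathscr{C}^{\bullet}(\Gamma_{0,1},\mathbb{D}_{\mathrm{univ}}(\lambda))^{\mathrm{ord}} \sim \varprojlim_{n}\mathscr{C}^{\bullet}(\Gamma_{1,n},W_{\lambda,n})^{\mathrm{ord}},
$$
which is the claim; here one uses that every tower in play has surjective transition maps, so that $\varprojlim_{n}$ is exact and carries the levelwise quasi-isomorphisms through.

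I expect the real work to be homological bookkeeping rather than anything group-theoretic. One must verify carefully that $\bigcap_{k}\mathcal{T}^{k}$ commutes with $\varprojlim_{n}$ — Pilloni's idempotent is not directly available on $\mathbb{D}_{\mathrm{univ}}(\lambda)$, whose reductions mod $\mathfrak{m}^{m}$ are infinite — and that the diagonal-cofinality and $\varprojlim^{1}$-vanishing arguments are compatible with the Hida lemma, which a priori only produces an isomorphism on cohomology, so that the resulting comparison is an honest quasi-isomorphism of complexes. The algebraic-group inputs — that $W_{\lambda,n}$ is genuinely a $\Gamma_{1,n}$-module (via $V_{1,n}\to Q_{G}(\mathbb{Z}/p^{n}\mathbb{Z})\to L_{G}(\mathbb{Z}/p^{n}\mathbb{Z})$, valid because $\bar{N}_{n}\equiv 1 \bmod p^{n}$) and that $\Gamma_{1,n}\cap\Gamma_{0,n} = \Gamma_{1,n}$ — are immediate from the definitions.
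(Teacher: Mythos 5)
Your proposal is correct and follows essentially the same route as the paper: write $\mathbb{D}_{\mathrm{univ}}(\lambda)$ as an inverse limit of the finite modules $\mathbb{D}_{r}(\lambda;p^{s})$, identify each finite level with $\mathscr{C}^{\bullet}(\Gamma_{1,r},W_{\lambda,s})^{\mathrm{ord}}$ via the truncated chain from Lemma \ref{lem:5} (Hida lemma, Proposition \ref{prop:6}, Shapiro), and pass to the limit. Your restriction to the diagonal $r=s=n$, where $\Gamma_{1,n}\cap\Gamma_{0,n}=\Gamma_{1,n}$ and $W_{\lambda,n}$ is genuinely a $\Gamma_{1,n}$-module, is a tidier bookkeeping of the same argument and in fact sidesteps a point the paper leaves implicit.
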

\begin{proof}
Note that $\mathbb{D}_{\mathrm{univ}}(\lambda) = \varprojlim_{r} \mathbb{D}_{r}(\lambda) = \varprojlim_{r,s}\mathbb{D}_{r}(\lambda;p^{s})$. The previous lemma gives us that 
$$
    \mathscr{C}^{\bullet}(\Gamma_{0,1}, \mathbb{D}_{r}(\lambda;p^{s}))^{\mathrm{ord}} = \mathscr{C}^{\bullet}(\Gamma_{1,r}, W_{\lambda, s})^{\mathrm{ord}}
$$
and the result follows from taking the inverse limit over $r,s$ on both sides. 
\end{proof}

We write 
$$
\mathscr{C}^{i}(\Gamma_{1, \infty}, W_{\lambda, \mathcal{O}})^{\mathrm{ord}} := \varprojlim_{r} \mathscr{C}^{i}(\Gamma_{1, r}, W_{\lambda, r})^{\mathrm{ord}}
$$
and similarly for cohomology.
\subsection{Perfection}
\begin{lemma} \label{lem:16}
Let $N$ be a $\Lambda_{r}$-module with trivial $\Gamma_{0,1}$-action. Then
$$
   \mathscr{C}^{\bullet}(\Gamma_{0,1}, \mathbb{D}_{\mathrm{univ}}(\lambda))^{\mathrm{ord}} \otimes_{\Lambda_{r}} N \cong  \mathscr{C}^{\bullet}(\Gamma_{0,1}, \mathbb{D}_{\mathrm{univ}}(\lambda)\otimes_{\Lambda_{r}} N)^{\mathrm{ord}} 
$$
\end{lemma}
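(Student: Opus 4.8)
The plan is to establish the isomorphism one cohomological degree at a time, since both $-\otimes_{\Lambda_{r}}N$ and $(-)^{\mathrm{ord}}$ are applied to $\mathscr{C}^{\bullet}(\Gamma_{0,1},-)$ degreewise. Recall that $\mathscr{C}^{i}(\Gamma_{0,1},M)=\mathrm{Hom}_{\mathcal{O}[\Gamma_{0,1}]}(\mathscr{C}_{i}(\Gamma_{0,1}),M)$ with $\mathscr{C}_{\bullet}(\Gamma_{0,1})$ a resolution of $\mathcal{O}$ by finite free $\mathcal{O}[\Gamma_{0,1}]$-modules. Picking bases $\mathscr{C}_{i}(\Gamma_{0,1})\cong\mathcal{O}[\Gamma_{0,1}]^{d_{i}}$ gives natural (in $M$) identifications $\mathscr{C}^{i}(\Gamma_{0,1},M)\cong M^{d_{i}}$, under which the differentials are matrices with entries in $\mathcal{O}[\Gamma_{0,1}]$ acting through the $\Gamma_{0,1}$-module structure on $M$. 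For $M=\mathbb{D}_{\mathrm{univ}}(\lambda)$ the twisted $L_{G}(\mathbb{Z}_{p})$-action defining $\Lambda_{0}$ commutes with the right-translation action of $\Gamma_{0,1}$, so all of this data is $\Lambda_{r}$-linear; since $-\otimes_{\Lambda_{r}}N$ commutes with finite direct sums, we obtain for any $\Lambda_{r}$-module $N$ with trivial $\Gamma_{0,1}$-action a natural isomorphism of complexes of $\Lambda_{r}$-modules
$$\mathscr{C}^{\bullet}(\Gamma_{0,1},\mathbb{D}_{\mathrm{univ}}(\lambda))\otimes_{\Lambda_{r}}N\;\cong\;\mathscr{C}^{\bullet}(\Gamma_{0,1},\mathbb{D}_{\mathrm{univ}}(\lambda)\otimes_{\Lambda_{r}}N).$$
No flatness of $N$ is needed, since each term is a finite free $\mathcal{O}[\Gamma_{0,1}]$-module and the differentials are independent of $N$.

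Next I would check that this degreewise isomorphism intertwines the Hecke operator $\mathcal{T}$ acting on the right-hand side with $\mathcal{T}\otimes\mathrm{id}_{N}$ on the left (i.e.\ $\mathcal{T}$ acting through the $\mathbb{D}_{\mathrm{univ}}(\lambda)$-factor). This reduces to the assertion that $\mathcal{T}=[\Gamma\tau^{-1}\Gamma]$ is $\Lambda_{r}$-linear: it is assembled from restriction, corestriction, and $[\tau^{-1}]$, and the $\Lambda_{0}$-action on $\mathbb{D}_{\mathrm{univ}}(\lambda)$ commutes both with the right-translation $\Gamma$-actions occurring in restriction and corestriction and with the $\tau$-action, since the latter affects only the $\bar{N}_{1}$-coordinate of $Y_{\mathrm{univ}}$ while $\Lambda_{0}$ acts through the $(L_{G}^{\mathrm{der}}\backslash L_{G})(\mathbb{Z}_{p})$-coordinate. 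Granting this, the ordinary idempotent $e=\lim_{n}\mathcal{T}^{n!}$ on $\mathscr{C}^{\bullet}(\Gamma_{0,1},\mathbb{D}_{\mathrm{univ}}(\lambda))$ is a limit of $\Lambda_{r}$-linear operators, hence $\Lambda_{r}$-linear, and each $e\,\mathscr{C}^{i}(\Gamma_{0,1},\mathbb{D}_{\mathrm{univ}}(\lambda))$ is a $\Lambda_{r}$-module direct summand.

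To finish, tensor the splitting $\mathscr{C}^{i}(\Gamma_{0,1},\mathbb{D}_{\mathrm{univ}}(\lambda))=e\,\mathscr{C}^{i}\oplus(1-e)\mathscr{C}^{i}$ over $\Lambda_{r}$ with $N$: since $e$ is a $\Lambda_{r}$-linear idempotent, $-\otimes_{\Lambda_{r}}N$ carries $e\,\mathscr{C}^{i}$ to the image of $e\otimes\mathrm{id}_{N}$, which under the degreewise isomorphism above is the image of the idempotent functorially induced by $e$ on $\mathscr{C}^{i}(\Gamma_{0,1},\mathbb{D}_{\mathrm{univ}}(\lambda)\otimes_{\Lambda_{r}}N)$, i.e.\ its ordinary part. (One takes "ordinary part" of the right-hand complex to mean the image of this functorial idempotent, as Pilloni's finiteness hypotheses guaranteeing the convergence of $\lim\mathcal{T}^{n!}$ need not survive $\otimes_{\Lambda_{r}}N$ for arbitrary $N$; for the finitely generated $N$ that arise in applications this is a non-issue.) Assembling over all degrees gives the stated quasi-isomorphism — in fact an isomorphism of complexes. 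The only step that is not purely formal is the $\Lambda_{r}$-linearity of $\mathcal{T}$ in the second paragraph, where a convention mismatch between the twisted $L_{G}(\mathbb{Z}_{p})$-action and the monoid action extending it by $\tau$ would be the most likely source of trouble; I would verify the commutation explicitly on $Y_{\mathrm{univ}}$.
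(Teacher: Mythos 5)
Your proposal is correct and follows essentially the same route as the paper: the paper likewise identifies $\mathscr{C}^{\bullet}(\Gamma_{0,1},\mathbb{D}_{\mathrm{univ}}(\lambda))$ degreewise with a finite direct sum of copies of $\mathbb{D}_{\mathrm{univ}}(\lambda)$ as $\Lambda_{r}$-modules (via the finite free resolution) and concludes because the continuous $\Lambda_{r}$-action commutes with the Hecke action, hence with the ordinary projector. Your extra care about the $\Lambda_{r}$-linearity of $\mathcal{T}$ and about interpreting the ordinary part of the tensored complex as the image of the induced idempotent is exactly the content the paper's one-line proof leaves implicit.
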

\begin{proof}
There's an integer $n$ such that 
$$
     \mathscr{C}^{\bullet}(\Gamma_{0,1}, \mathbb{D}_{\mathrm{univ}}(\lambda)) \cong \oplus_{i = 1}^{n}\mathbb{D}_{\mathrm{univ}}(\lambda)
$$
as $\Lambda_{r}[\Gamma_{0,1}]$-modules. The result follows since the $\Lambda_{r}$-action is continuous and commutes with the Hecke action.
\end{proof}
\begin{lemma}
The $\Lambda_{r}$-regular sequence $([s_{i}^{(r)}] - \chi(s_{i}^{(r)}))_{i}$ generating $I_{r}^{(\chi)}$ is $\mathbb{D}_{\mathrm{univ}}(\lambda)$-regular for any $\lambda$.
\end{lemma}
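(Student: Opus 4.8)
The plan is to identify $\mathbb{D}_{\mathrm{univ}}(\lambda)$, as a module over $\Lambda_r$, with a product of copies of $\Lambda_r$, and then invoke the elementary fact that a regular sequence on a ring remains regular on any product of copies of that ring. For the identification I would argue as in the proof of Lemma \ref{lem:4}: there $\mathscr{C}_{\mathrm{univ}}(\lambda)$ is identified with the space of continuous $W_{\lambda,\mathcal{O}}$-valued functions on $Y_{\mathrm{univ}}\cong S_G(\mathbb{Z}_p)\times\bar N_1$, $\Lambda_0$-equivariantly for the translation action of $\Lambda_0$ on the $S_G(\mathbb{Z}_p)$-factor only. Dualising, and using that $W_{\lambda,\mathcal{O}}$ is a finite free $\mathcal{O}$-module, we get
\[
\mathbb{D}_{\mathrm{univ}}(\lambda)\;\cong\;\mathcal{O}[[S_G(\mathbb{Z}_p)]]\;\hat\otimes_{\mathcal{O}}\;\bigl(\mathcal{O}[[\bar N_1]]\otimes_{\mathcal{O}}W_{\lambda,\mathcal{O}}\bigr),
\]
with $\Lambda_0=\mathcal{O}[[S_G(\mathbb{Z}_p)]]$, hence $\Lambda_r\subseteq\Lambda_0$, acting only on the first factor. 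The second factor is a topologically free $\mathcal{O}$-module, i.e. $\cong\prod_{J_0}\mathcal{O}$ for some index set $J_0$, and $\mathcal{O}[[S_G(\mathbb{Z}_p)]]\,\hat\otimes_{\mathcal{O}}\,\prod_{J_0}\mathcal{O}\cong\prod_{J_0}\Lambda_0$; finally $[S_G(\mathbb{Z}_p):S_r(\mathbb{Z}_p)]<\infty$ makes $\Lambda_0$ a finite free $\Lambda_r$-module (the decomposition recorded before Lemma \ref{lem:11} is the case $r=1$; the general case follows from it together with the freeness of $\Lambda_1$ over $\Lambda_r$, and for $r=0$ one works one $\psi$-summand of $\Lambda_0=\bigoplus_\psi\Lambda_1^{(\psi)}$ at a time). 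Hence $\mathbb{D}_{\mathrm{univ}}(\lambda)\cong\prod_{J}\Lambda_r$ as $\Lambda_r$-modules for a suitable index set $J$.

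Granting this, write $x_i=[s_i^{(r)}]-\chi(s_i^{(r)})$, so that by hypothesis $(x_1,\dots,x_d)$ is a $\Lambda_r$-regular sequence generating $I_r^{(\chi)}$, with $\Lambda_r/I_r^{(\chi)}\cong\mathcal{O}$. For any $\Lambda_r$-module $N$, any $y\in\Lambda_r$ and any set $J$, the element $y$ is a nonzerodivisor on $\prod_JN$ as soon as it is on $N$, and $(x_1,\dots,x_j)\prod_JN=\prod_J\bigl((x_1,\dots,x_j)N\bigr)$ — each coordinate being cleared independently, so there is no coherence obstruction — whence $\prod_JN/(x_1,\dots,x_j)\cong\prod_J\bigl(N/(x_1,\dots,x_j)N\bigr)$. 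Feeding $N=\Lambda_r,\ \Lambda_r/(x_1),\ \dots,\ \Lambda_r/(x_1,\dots,x_{d-1})$ through this inductively shows $(x_1,\dots,x_d)$ is $\prod_J\Lambda_r$-regular, and $\prod_J\Lambda_r/I_r^{(\chi)}\cong\prod_J\mathcal{O}\neq0$. Transporting along the isomorphism above gives the claim. (Equivalently: $\prod_J\Lambda_r$ is flat over the Noetherian ring $\Lambda_r$, and a regular sequence on a ring stays regular on any flat module with proper quotient — here the quotient $\mathbb{D}_{\mathrm{univ}}(\lambda)/I_r^{(\chi)}\mathbb{D}_{\mathrm{univ}}(\lambda)$ is precisely $\mathbb{D}_r(\lambda+\chi)$ by Lemma \ref{lem:4}, so its nonvanishing is transparent.)

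The only step with real content is the identification $\mathbb{D}_{\mathrm{univ}}(\lambda)\cong\prod_J\Lambda_r$; the rest is formal. Within it the care required is functional-analytic: one must check that the $\Lambda_r$-action lives on the $\mathcal{O}[[S_G(\mathbb{Z}_p)]]$-tensor factor (the $\Lambda_0$-equivariance in the proof of Lemma \ref{lem:4}), that $\mathcal{O}[[\bar N_1]]\otimes_{\mathcal{O}}W_{\lambda,\mathcal{O}}$ is genuinely a product of copies of $\mathcal{O}$, and that completing $\mathcal{O}[[S_G(\mathbb{Z}_p)]]\otimes_{\mathcal{O}}(-)$ against this product reproduces a product of copies of $\Lambda_0$ — concretely, that $\mathcal{O}_s[[\bar N_1]]\otimes_{\mathcal{O}_s}(W_{\lambda,\mathcal{O}}\otimes\mathcal{O}_s)\cong\prod_{J_0}\mathcal{O}_s$ with surjective transition maps in $s$, so that $\varprojlim_s$ (which computes the completed tensor product here) commutes with the product. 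None of this is deep, but it is the one place where an indexing slip of the kind flagged after Theorem \ref{thm:5} could creep in, which is why the statement deserves to be a lemma rather than being left to the reader.
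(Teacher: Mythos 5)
Your proposal is correct and follows essentially the same route as the paper: the paper's proof simply invokes the isomorphism \eqref{eq:12} to reduce regularity of the sequence on $\mathbb{D}_{\mathrm{univ}}(\lambda)$ to regularity in $\Lambda_{0}$, which is exactly the reduction you carry out, only with the $\Lambda_r$-equivariance of the identification and the product-of-copies bookkeeping made explicit. No further comparison is needed.
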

\begin{proof}
We reduce, using the isomorphism \eqref{eq:12}, to showing that the given sequence is regular in $\Lambda_{0}$, which is visibly the case.
\end{proof}
\begin{proposition} \label{prop:2}
For any choice of $\mathscr{C}_{\bullet}(\Gamma_{0,1})$, the modules $ \mathscr{C}^{\bullet}(\Gamma_{0,1}, \mathbb{D}_{\mathrm{univ}}(\lambda))^{\mathrm{ord}}$ are finite flat $\Lambda_{1}$-modules.
\end{proposition}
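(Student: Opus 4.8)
Proposition \ref{prop:2} asserts two things about the $\Lambda_1$-module $\mathscr{C}^{i}(\Gamma_{0,1},\mathbb{D}_{\mathrm{univ}}(\lambda))^{\mathrm{ord}}$ in each degree $i$: flatness and finite generation. The plan is to treat these separately, both resting on the observation that, since $\mathscr{C}_{\bullet}(\Gamma_{0,1})$ consists of finite free $\mathcal{O}[\Gamma_{0,1}]$-modules, there is an isomorphism of $\Lambda_0$-modules $\mathscr{C}^{i}(\Gamma_{0,1},\mathbb{D}_{\mathrm{univ}}(\lambda)) \cong \mathbb{D}_{\mathrm{univ}}(\lambda)^{\oplus n_i}$ (exactly as in the proof of Lemma \ref{lem:16}), and that the Hecke operator $\mathcal{T}$ commutes with the $\Lambda_0$-action, so that the ordinary idempotent $e$ is $\Lambda_0$-linear and $\mathscr{C}^{i}(\Gamma_{0,1},\mathbb{D}_{\mathrm{univ}}(\lambda))^{\mathrm{ord}} = e\,\mathscr{C}^{i}(\Gamma_{0,1},\mathbb{D}_{\mathrm{univ}}(\lambda))$ is a $\Lambda_0$-module (hence $\Lambda_1$-module) direct summand of $\mathbb{D}_{\mathrm{univ}}(\lambda)^{\oplus n_i}$. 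Since over a Noetherian local ring a finitely generated flat module is free, it then suffices to prove that $\mathbb{D}_{\mathrm{univ}}(\lambda)$ is flat over $\Lambda_1$ and that $\mathscr{C}^{i}(\Gamma_{0,1},\mathbb{D}_{\mathrm{univ}}(\lambda))^{\mathrm{ord}}$ is finitely generated over $\Lambda_1$.

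For flatness I would use \eqref{eq:12}: as a $\Lambda_0$-module $\mathbb{D}_{\mathrm{univ}}(\lambda) \cong \Lambda_0\,\widehat{\otimes}_{\mathcal{O}}\,\big(\mathcal{O}[[\bar{N}_1]]\otimes_{\mathcal{O}}W_{\lambda,\mathcal{O}}\big)$ is an inverse limit of finite free $\Lambda_0$-modules with surjective transition maps, hence flat over $\Lambda_0$; as $\Lambda_0 = \bigoplus_{\psi}\Lambda_1^{(\psi)}$ is finite free over $\Lambda_1$, it is flat over $\Lambda_1$. (Alternatively, the preceding lemma together with the $\mathcal{O}$-freeness of $\mathbb{D}_{\mathrm{univ}}(\lambda)$ shows that $p$ and the generators of $I_1^{(\chi)}$ form a $\mathbb{D}_{\mathrm{univ}}(\lambda)$-regular system of parameters for $\Lambda_1$, so the Koszul complex computes $\mathrm{Tor}^{\Lambda_1}_{\bullet}(\mathbb{D}_{\mathrm{univ}}(\lambda),k)$ and it is concentrated in degree $0$; as $\mathbb{D}_{\mathrm{univ}}(\lambda)$ is $\mathfrak{m}_{\Lambda_1}$-adically complete the local criterion for flatness applies.) A $\Lambda_1$-direct summand of a flat module is flat, so $\mathscr{C}^{i}(\Gamma_{0,1},\mathbb{D}_{\mathrm{univ}}(\lambda))^{\mathrm{ord}}$ is flat over $\Lambda_1$.

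For finite generation, $\mathscr{C}^{i}(\Gamma_{0,1},\mathbb{D}_{\mathrm{univ}}(\lambda))^{\mathrm{ord}}$ is a compact $\Lambda_1$-module, being the image of the continuous idempotent $e$ on the profinite module $\mathbb{D}_{\mathrm{univ}}(\lambda)^{\oplus n_i}$, so by the topological Nakayama lemma it is enough to show the fibre $\mathscr{C}^{i}(\Gamma_{0,1},\mathbb{D}_{\mathrm{univ}}(\lambda))^{\mathrm{ord}}\otimes_{\Lambda_1}k$ is finite-dimensional over the residue field $k$ of $\Lambda_1$. By Lemma \ref{lem:16} (with $r=1$, $N=k$) this fibre is $\mathscr{C}^{i}(\Gamma_{0,1},\mathbb{D}_{\mathrm{univ}}(\lambda)\otimes_{\Lambda_1}k)^{\mathrm{ord}}$; fixing an algebraic character $\chi$ with $\lambda+\chi$ dominant for $G$, so that $\mathfrak{m}_{\Lambda_1} = I_1^{(\chi)}+(p)$, Lemma \ref{lem:4} identifies $\mathbb{D}_{\mathrm{univ}}(\lambda)\otimes_{\Lambda_1}k$ with $\mathbb{D}_1(\lambda+\chi)\otimes_{\mathcal{O}}k$, so the fibre equals $\mathscr{C}^{i}(\Gamma_{0,1},\mathbb{D}_1(\lambda+\chi)\otimes_{\mathcal{O}}k)^{\mathrm{ord}}$. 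Now $\mathcal{T}$ is completely continuous on $\mathscr{C}^{i}(\Gamma_{0,1},\mathbb{D}_1(\lambda+\chi)\otimes_{\mathcal{O}}k)$ — this is exactly where the hypothesis $\bigcap_{i}\tau^{-i}\bar{N}_G(\mathbb{Z}_p)\tau^{i} = \{1\}$ is used, and it is what underlies Pilloni's construction of $e$ — and a completely continuous operator on a profinite $k$-vector space has finite-dimensional ordinary part $\bigcap_{n}\mathcal{T}^{n}(\,\cdot\,)$, which gives the required finiteness. (One could instead run the argument of Lemma \ref{lem:5} with $k$-coefficients and reduce to $\mathscr{C}^{\bullet}(\Gamma_{1,1},V_{\lambda+\chi,\mathcal{O}}\otimes_{\mathcal{O}}k)^{\mathrm{ord}}$, a bounded complex of finite-dimensional $k$-vector spaces.)

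Assembling these, $\mathscr{C}^{i}(\Gamma_{0,1},\mathbb{D}_{\mathrm{univ}}(\lambda))^{\mathrm{ord}}$ is a finitely generated flat — hence finite free — module over the regular local ring $\Lambda_1$, which is the proposition (and, by Lemma \ref{lem:11}, it is even finite projective over $\Lambda_0$). The step I expect to be the genuine obstacle is the finiteness of the fibre in the third paragraph: both $\mathbb{D}_1(\lambda+\chi)$ (of infinite rank over $\mathcal{O}$, pro-free along the $\bar{N}_1$-direction) and $\mathbb{D}_{\mathrm{univ}}(\lambda)$ (of infinite rank over $\Lambda_1$) are very far from finite, so all of the finiteness is produced by the ordinary projector, and making this rigorous is precisely the complete continuity of the $U_p$-operator that already powers the control theorem of Lemma \ref{lem:5}.
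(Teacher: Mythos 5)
Your overall skeleton (finite generation by topological Nakayama applied to the mod-$\mathfrak{m}_{1}$ fibre, flatness proved separately, then finite flat over the local ring $\Lambda_{1}$ gives free) matches the paper, but the two halves differ in execution. For flatness the paper does not use your direct-summand trick: it first shows (in the lemma preceding Proposition \ref{prop:2}) that the regular sequence generating $\mathfrak{m}_{1}$ is $\mathbb{D}_{\mathrm{univ}}(\lambda)$-regular, transfers regularity to $\mathscr{C}^{\bullet}(\Gamma_{0,1},\mathbb{D}_{\mathrm{univ}}(\lambda))^{\mathrm{ord}}$ via Lemma \ref{lem:16}, and then invokes the local criterion for flatness through the Koszul complex --- exactly your parenthetical alternative. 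Your main route (the ordinary part as an $e$-cut $\Lambda_{1}$-direct summand of $\mathbb{D}_{\mathrm{univ}}(\lambda)^{\oplus n_{i}}$, which is a power-series-type, hence flat, module over the Noetherian ring $\Lambda_{0}$ and so flat over $\Lambda_{1}$) is legitimate and arguably cleaner, but be aware that the existence of the idempotent $e$ on these cochain modules does not follow from the criterion quoted in Section \ref{sec:1b}: the quotients $\mathscr{C}^{i}/\mathfrak{m}^{n}$ are infinite in the $\bar{N}_{1}$-direction, so $e$ has to be produced as a limit over the finite $\mathcal{T}$-stable quotients $\mathbb{D}_{r}(\lambda;p^{s})$. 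The paper implicitly leans on the same device, so this is a caveat to record rather than an error.

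The genuine gap is the finiteness of the fibre, which is the heart of the proposition. The paper proves it by identifying, termwise and in the manner of Proposition \ref{prop:6}, the complex $\mathscr{C}^{\bullet}(\Gamma_{0,1},\mathbb{D}_{1}(\lambda)/p)^{\mathrm{ord}}$ with $\mathscr{C}^{\bullet}(\Gamma_{0,1},(\Lambda_{1}/\mathfrak{m}_{1})[L_{1}^{\mathrm{der}}\backslash L_{G}])^{\mathrm{ord}}$, a complex of finite modules; the mechanism is the congruence $\tau^{-k}\bar{N}_{0}\tau^{k}\equiv 1 \ \mathrm{mod}\ p$, which forces the ordinary part of the kernel of the coefficient surjection to vanish in each degree. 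Your main route simply asserts that $\mathcal{T}$ is ``completely continuous'' on $\mathscr{C}^{i}(\Gamma_{0,1},\mathbb{D}_{1}(\lambda+\chi)\otimes k)$; that assertion is precisely this Proposition \ref{prop:6}-style contraction computation and has to be carried out (it cannot be outsourced to Pilloni, whose idempotent is constructed under a finiteness hypothesis this module fails). Your fallback does not repair it: Lemma \ref{lem:5} passes through the Hida lemma and so only yields quasi-isomorphisms, i.e.\ finiteness of the cohomology of the fibre, whereas Nakayama for the statement of Proposition \ref{prop:2} --- which concerns the cochain modules $\mathscr{C}^{i}$ themselves --- needs finiteness of the module-level fibre. (A smaller point of the same kind: Lemma \ref{lem:4} only computes the kernel of $\mathbb{D}_{\mathrm{univ}}(\lambda)\to\mathbb{D}_{1}(\lambda+\chi)$, so your identification of the fibre also uses surjectivity of this map, which should be said.) Once you replace the complete-continuity assertion by the explicit contraction argument --- equivalently, show a fixed power of $\mathcal{T}$ mod $p$ factors through cochains with finite coefficients --- your proof closes up and agrees in substance with the paper's.
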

\begin{proof}

 
 Let $\mathfrak{m}_{1}$ denote the maximal ideal of $\Lambda_{1}$. There is an exact sequence 
$$
    0 \to \mathfrak{m} \mathscr{C}^{\bullet}(\Gamma_{0,1}, \mathbb{D}_{\mathrm{univ}}(\lambda))^{\mathrm{ord}} \to  \mathscr{C}^{\bullet}(\Gamma_{0,1}, \mathbb{D}_{\mathrm{univ}}(\lambda))^{\mathrm{ord}} \to  \mathscr{C}^{\bullet}(\Gamma_{0,1},\mathbb{D}_{1}(\lambda)/p)^{\mathrm{ord}} \to 0.
$$
In a similar way to Proposition \ref{prop:6} we can show that 
$$
   \mathscr{C}^{\bullet}(\Gamma_{0,1},\mathbb{D}_{1}(\lambda)/p)^{\mathrm{ord}} \cong \mathscr{C}^{\bullet}(\Gamma_{0,1},(\Lambda_{1}/\mathfrak{m}_{1})[L_{r}^{\mathrm{der}}\backslash L_{G}])^{\mathrm{ord}} 
$$
and thus that it is finite.
Thus we can apply the Nakayama lemma to conclude that $ \mathscr{C}^{\bullet}(\Gamma_{0,1}, \mathbb{D}_{\mathrm{univ}}(\lambda))^{\mathrm{ord}}$ is a complex of finitely generated $\Lambda_{1}$-modules. 
The ideal $\mathfrak{m}_{1}$ is generated by a regular sequence $(p, x_{1}, \ldots, x_{n})$, and $\Lambda_{1}/\mathfrak{m}_{1} \cong \mathbb{F}_{p^{k}}$ for $k = [K:\mathbb{Q}_{p}]$. By the local criterion for flatness as stated in \cite[theorem 6.8]{eisenbud}, it suffices to show that 
$$
    \mathrm{Tor}_{1}^{\Lambda_{1}}( \mathscr{C}^{*}(\Gamma_{0,1}, \mathbb{D}_{\mathrm{univ}}(\lambda))^{\mathrm{ord}}, \mathbb{F}_{p^{k}}) = 0.
$$
This group is computed by the Koszul complex for $(p, x_{1}, \ldots, x_{m})$ tensored with $ \mathscr{C}^{*}(\Gamma_{0,1}, \mathbb{D}_{\mathrm{univ}}(\lambda))^{\mathrm{ord}}$, and thus it suffices to prove that the above sequence is $ \mathscr{C}^{*}(\Gamma_{0,1}, \mathbb{D}_{\mathrm{univ}}(\lambda))^{\mathrm{ord}}$-regular. This follows from the fact that this sequence is $\mathbb{D}_{\mathrm{univ}}(\lambda)$-regular and Lemma \ref{lem:16}.
\end{proof}

\begin{corollary}
The complex $\mathscr{C}^{\bullet}(\Gamma_{0,1}, \mathbb{D}_{\mathrm{univ}}(\lambda))^{\mathrm{ord}}$ is a perfect complex of $\Lambda_{0}$-modules concentrated in degrees $[0,\nu]$.
\end{corollary}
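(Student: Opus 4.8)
The plan is to deduce perfectness and the degree bound almost formally from Proposition \ref{prop:2} and Lemma \ref{lem:11}, the genuine input being the termwise finite projectivity over the larger ring $\Lambda_0$.

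First I would pin down the terms. By Proposition \ref{prop:2} each $\mathscr{C}^{i}(\Gamma_{0,1}, \mathbb{D}_{\mathrm{univ}}(\lambda))^{\mathrm{ord}}$ is a finitely generated flat $\Lambda_{1}$-module; since $\Lambda_{1}$ is Noetherian and local this forces it to be finite free, hence finite projective, over $\Lambda_{1}$. Lemma \ref{lem:11} then promotes it to a projective $\Lambda_{0}$-module, and it is finitely generated over $\Lambda_{0}$ because $\Lambda_{0} = \bigoplus_{\psi}\Lambda_{1}^{(\psi)}$ is a finite $\Lambda_{1}$-module. Thus $\mathscr{C}^{\bullet}(\Gamma_{0,1}, \mathbb{D}_{\mathrm{univ}}(\lambda))^{\mathrm{ord}}$ is a complex of finite projective $\Lambda_{0}$-modules, automatically concentrated in non-negative degrees since $\mathscr{C}_{\bullet}(\Gamma_{0,1})$ is.

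Next I would arrange a bounded model and cut it down. Take $\mathscr{C}_{\bullet}(\Gamma_{0,1})$ to be a finite free resolution concentrated in some range $[0,N]$ — such a resolution exists because $\Gamma_{0,1}$, being an arithmetic group, acts on the contractible Borel–Serre space with compact quotient, giving a finite CW-model (passing to a torsion-free finite-index subgroup if $\Gamma_{0,1}$ has torsion, which alters the complex only up to quasi-isomorphism once ordinary parts are taken). Then $\mathscr{C}^{\bullet}(\Gamma_{0,1}, \mathbb{D}_{\mathrm{univ}}(\lambda))^{\mathrm{ord}}$ is a bounded complex of finite projective $\Lambda_{0}$-modules sitting in degrees $[0,N]$ whose cohomology $H^{i}(\Gamma_{0,1}, \mathbb{D}_{\mathrm{univ}}(\lambda))^{\mathrm{ord}}$ vanishes for $i > \nu$, since $\Gamma_{0,1}$ has virtual cohomological dimension $\nu$. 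For a bounded complex of projectives the vanishing of the top cohomology forces the top differential to be a split surjection onto the (projective) top term; I would then replace the penultimate term by the kernel of this differential and drop the top term, obtaining a homotopy-equivalent complex of finite projective $\Lambda_{0}$-modules of length one shorter, and iterate down to degrees $[0,\nu]$. The resulting bounded complex of finite projective $\Lambda_{0}$-modules is the desired perfect representative of $\mathscr{C}^{\bullet}(\Gamma_{0,1}, \mathbb{D}_{\mathrm{univ}}(\lambda))^{\mathrm{ord}}$ in $\mathscr{D}(\Lambda_{0})$.

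The only real subtlety I anticipate is the coefficient-ring bookkeeping: checking that ``finite flat over $\Lambda_{1}$'' genuinely upgrades to ``finite projective over $\Lambda_{0}$'', which is exactly Lemma \ref{lem:11} together with the finiteness of $\Lambda_{0}$ over $\Lambda_{1}$, plus a mild care about torsion in $\Gamma_{0,1}$ when invoking the virtual cohomological dimension and the existence of a finite resolution. Everything else is standard homological algebra.
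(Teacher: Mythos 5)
Your argument is correct and is essentially the paper's own proof: Proposition \ref{prop:2} gives that the terms are finite flat over the local ring $\Lambda_{1}$, hence finite free, and Lemma \ref{lem:11} (together with the finiteness of $\Lambda_{0}$ over $\Lambda_{1}$) upgrades this to finite projectivity over $\Lambda_{0}$. The only difference is that you additionally spell out the truncation down to degrees $[0,\nu]$ and the attendant torsion caveat, a point the paper leaves implicit in its choice of the resolution $\mathscr{C}_{\bullet}(\Gamma_{0,1})$.
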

\begin{proof}
By the above lemmas the modules $C^{i}$ are finite flat over the local ring $\Lambda_{1}$ and are thus free. We are done by Lemma \ref{lem:11}.
\end{proof}

Write 
$$
M_{\lambda}^{\bullet} : = \mathscr{C}^{\bullet}(\Gamma_{0,1}, \mathbb{D}_{\mathrm{univ}}(\lambda))^{\mathrm{ord}}.
$$
\begin{proof} (of Theorem \ref{thm:5})
By Proposition \ref{prop:2} we have
$$
    M_{\lambda}^{\bullet} \otimes^{L}_{\Lambda_{r}}\mathcal{O}^{(\chi)} \sim \mathscr{C}^{\bullet}(\Gamma_{0,1}, \mathbb{D}_{\mathrm{univ}}(\lambda))^{\mathrm{ord}} \otimes_{\Lambda_{r}} \mathcal{O}^{(\chi)}
$$
by Lemma \ref{lem:4} we have $$
 M_{\lambda}^{\bullet} \otimes^{L}_{\Lambda_{r}}\mathcal{O}^{(\chi)} \sim \mathscr{C}^{\bullet}(\Gamma_{0,1}, \mathbb{D}_{r}(\lambda))^{\mathrm{ord}}
$$
and by Lemma \ref{lem:5} 
$$
     M_{\lambda}^{\bullet} \otimes^{L}_{\Lambda_{r}}\mathcal{O}^{(\chi)} \sim \mathscr{C}^{\bullet}(\Gamma_{1,r}, V_{\lambda,\mathcal{O}})^{\mathrm{ord}}.
$$
\end{proof}

\section{$p$-stabilisation and duality} \label{sec:4}
 
\subsection{$p$-stabilisation}
Let 
$$
    \Gamma = K^{p} \times G(\mathbb{Z}_{p}) \cap G(\mathbb{Q})
$$
for some tame level $K^{p} \subset G(\hat{\mathbb{Z}}^{(p)})$ unramified at almost all primes. 
Define a Hecke operator $\mathcal{T}_{0} = [\Gamma\tau^{-1}\Gamma]$. Let $\mathcal{K} = G(\mathbb{Z}_{p})$ and $J = \tau^{-1} \mathcal{K} \tau \cap \mathcal{K}$. Let $W$ be the Weyl group of $G$, $W_{L}$ the Weyl group of $L_{G}$ and $w_{0}$ the long Weyl element in $W$. For any $W$-set $X$, $x \in X$, $w \in W$ write $x_{w} := w \cdot x$. Let $I$ be the parahoric subgroup associated to $Q_{G}$. 
\begin{lemma}
\begin{equation} \label{eq:8} 
    \mathcal{K}\tau^{-1}\mathcal{K} = \sqcup _{w \in W/W_{L_{G}}} \sqcup_{u \in (w_{0}w)^{-1}Jw_{0}w \cap I\backslash I} uw\tau^{-1}\mathcal{K}
\end{equation}
\end{lemma}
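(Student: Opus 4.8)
The plan is to rewrite the claimed decomposition as a statement about a system of coset representatives for $\mathcal{K}/J$, and then to obtain such a system from the Bruhat decomposition of the finite group $G(\mathbb{F}_{p})$ (or of $G(\mathbb{Z}/p^{m}\mathbb{Z})$ for $m=\max_{\Phi}\langle\eta,\Phi\rangle$, since $J$ contains the principal congruence subgroup of that level).

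First I would note that $g\tau^{-1}\mathcal{K}=g'\tau^{-1}\mathcal{K}$ exactly when $g^{-1}g'\in\tau^{-1}\mathcal{K}\tau\cap\mathcal{K}=J$, so $g\tau^{-1}\mathcal{K}\mapsto gJ$ is a bijection $\mathcal{K}\tau^{-1}\mathcal{K}/\mathcal{K}\xrightarrow{\sim}\mathcal{K}/J$; hence it suffices to check that the elements $uw$ on the right-hand side (for a fixed choice of lifts $\dot w\in\mathcal{K}$ of the classes $w\in W/W_{L_{G}}$ normalising $T_{G}(\mathbb{Z}_{p})$) run over a complete, irredundant set of representatives of $\mathcal{K}/J$. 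Next I would record the Iwahori-type factorisations forced by the strict dominance of $\eta$, i.e. by the fact that conjugation by $\tau$ contracts $N_{G}$ into itself and dilates $\bar N_{G}$: one obtains $J\subseteq \bar N_{G}(\mathbb{Z}_{p})L_{G}(\mathbb{Z}_{p})N_{G}(\mathbb{Z}_{p})$ together with the factorisation $J=\bar N_{1}\cdot L_{G}(\mathbb{Z}_{p})\cdot N_{0}$, while the parahoric is $I=\bar N_{G}(p\mathbb{Z}_{p})\cdot L_{G}(\mathbb{Z}_{p})\cdot N_{0}$; in particular $J\subseteq I$, and $w_{0}^{-1}Jw_{0}$ is again Iwahori-type and contained in $I$, so $(w_{0}w)^{-1}Jw_{0}w\cap I$ is a genuine subgroup of $I$ whose index combines a Schubert-cell contribution with the extra $\tau$-depth separating $J$ from $I$.

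The substantive step is the Bruhat decomposition. Since the contraction direction of $\tau^{-1}$ points towards the opposite parabolic $\bar Q_{G}=w_{0}Q_{G}w_{0}^{-1}$, the relevant stratification of the finite flag variety $\mathcal{K}/I\cong (G/Q_{G})(\mathbb{F}_{p})$ is the one cut out by $\bar N_{G}$-orbits with representatives of the form $\dot w_{0}\dot w$; so I would start from a decomposition of shape $\mathcal{K}=\bigsqcup_{w\in W/W_{L_{G}}}\bar U_{w}\,\dot w_{0}\dot w\,I$ with $\bar U_{w}\subseteq\bar N_{G}(\mathbb{Z}_{p})$ a chosen lift of the Schubert cell indexed by $w_{0}w$, and then refine each stratum modulo the extra depth between $J$ and $I$. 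Transporting the defining congruence of $J$ across $\dot w_{0}\dot w$ is precisely what converts this refinement into a partition into cosets of $(w_{0}w)^{-1}Jw_{0}w\cap I$; reindexing $\bar U_{w}\dot w_{0}\dot w$ in the form $uw$ with $u\in I$ then yields the asserted list. Disjointness is clear (distinct strata lie in distinct $I$-double cosets, and within a stratum the refinement is a partition by construction), and exhaustion follows from the cardinality identity $\sum_{w}[I:(w_{0}w)^{-1}Jw_{0}w\cap I]=[\mathcal{K}:I]\cdot[I:J]=[\mathcal{K}:J]$.

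The main obstacle is organisational rather than conceptual: pinning down the Weyl-group indexing and the $w_{0}$-twist so that it is a conjugate of $J$ — not of $I$ — and the representatives $\dot w_{0}\dot w$ — not $\dot w$ — that appear, and then checking that conjugation by $\dot w_{0}\dot w$ carries the Iwahori factorisation of $J$ onto the congruence implicit in the cosets, so that the refinement of each Schubert stratum really is the partition into $(w_{0}w)^{-1}Jw_{0}w\cap I$-cosets. This forces a compatible choice of the lifts $\dot w$ and a somewhat delicate analysis of how conjugation by the Weyl representatives permutes the $\bar N_{G}$-, $L_{G}$- and $N_{G}$-components.
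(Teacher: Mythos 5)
Your overall strategy is the right one, and in fact it is the strategy of the proof the paper is pointing to: the paper offers no argument of its own here beyond the remark that the $\mathrm{GSp}_4$ computation of Tilouine--Urban carries over verbatim, and that computation is exactly a reduction to coset representatives of $\mathcal{K}/J$ followed by a Bruhat/Iwahori-factorisation analysis. The first step of your plan (the bijection $g\tau^{-1}\mathcal{K}\mapsto gJ$, $J=\tau^{-1}\mathcal{K}\tau\cap\mathcal{K}$) is correct and is the right reduction.

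However, the two points you dispose of in one line --- ``disjointness is clear'' and exhaustion ``follows from the cardinality identity'' --- are precisely where the content of the lemma lies, and as stated they do not hold without the careful normalisation of representatives that you postpone as ``organisational''. Disjointness genuinely depends on which representatives $u$ of $\bigl((w_0w)^{-1}Jw_0w\cap I\bigr)\backslash I$ one takes and on how they interact with conjugation by $\tau$: already for $G=\mathrm{GL}_2$, $Q_G=B_G$, $\tau=\mathrm{diag}(p,1)$, the stratum $w=1$ has subgroup $w_0^{-1}Jw_0\cap I$, a natural set of right-coset representatives in $I$ is the upper unipotents $u_x$ mod $p$, and then $u_x\tau^{-1}\mathcal{K}=\tau^{-1}(\tau u_x\tau^{-1})\mathcal{K}=\tau^{-1}\mathcal{K}$ for every $x$, so all the cosets listed in that stratum collapse; the lemma is only true for a suitably chosen (essentially opposite-unipotent, $w_0$-twisted) system of representatives, and exhibiting it is exactly the Iwahori-factorisation bookkeeping you defer. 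Your auxiliary claim that distinct strata lie in distinct $I$-double cosets is also false for a proper parabolic, since the strata are indexed by $W/W_{L_G}$ while the $I$-double cosets are indexed by $W_{L_G}\backslash W/W_{L_G}$; relatedly, the index $[I:(w_0w)^{-1}Jw_0w\cap I]$ is not constant on the coset $wW_{L_G}$, so your counting identity $\sum_w[I:(w_0w)^{-1}Jw_0w\cap I]=[\mathcal{K}:J]$ is not well-posed, let alone formal, until the representatives $w$ are pinned down, and proving it for the correct choice is the same root-by-root computation as the disjointness. Finally, the proposed ``reindexing of $\bar U_w\dot w_0\dot w$ in the form $uw$ with $u\in I$'' cannot be done as written, since $\bar u\dot w_0\notin I$. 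In short: the reduction to $\mathcal{K}/J$ and the idea of refining Bruhat cells by the depth separating $J$ from $I$ match the cited argument, but the steps you label clear or organisational are the actual proof, and the naive execution of them fails.
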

\begin{proof}
The proof given for the $\mathrm{GSp}_{4}$ case in \cite{tilurb} carries over verbatim.
\end{proof}
We will call a weight $\lambda \in X^{\bullet}(T)$  \textit{very regular} if it has trivial stabiliser in the Weyl group. An equivalent formulation is that it does not lie the wall of one of the Weyl chambers. The method of proof of the following theorem is the same as that used in \cite[Proposition 3.2]{tilurb} and \cite[Lemma 7.2]{hidacontrol}. 

\begin{theorem} \label{thm:7}
For $\lambda$ very regular and dominant for $B_{G}$ there is a quasi-isomorphism 
$$
    \mathscr{C}^{\bullet}(\Gamma, V_{\lambda, \mathcal{O}})^{\mathcal{T}_{0}-\mathrm{ord}} \cong  \mathscr{C}^{\bullet}(\Gamma_{0,1}, V_{\lambda, \mathcal{O}})^{\mathrm{ord}} 
$$
given by restricting $e \circ \mathrm{res}$ to the ordinary subspace defined by $\mathcal{T}_{0}$. 
\end{theorem}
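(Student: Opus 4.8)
The plan is to mimic the classical $p$-stabilisation argument (as in \cite[Proposition 3.2]{tilurb}, \cite[Lemma 7.2]{hidacontrol}), organised around the coset decomposition \eqref{eq:8}. First I would set up the restriction map $\mathrm{res}^{\Gamma}_{\Gamma_{0,1}}: \mathscr{C}^{\bullet}(\Gamma, V_{\lambda,\mathcal{O}}) \to \mathscr{C}^{\bullet}(\Gamma_{0,1}, V_{\lambda,\mathcal{O}})$ and check that it intertwines the Hecke operator $\mathcal{T}_{0} = [\Gamma\tau^{-1}\Gamma]$ at full level with the operator $\mathcal{T} = [\Gamma_{0,1}\tau^{-1}\Gamma_{0,1}]$ at parahoric level, up to the extra terms coming from the Weyl cosets in \eqref{eq:8}. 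Concretely, using \eqref{eq:8} one writes $\mathrm{res}\circ\mathcal{T}_{0} = \sum_{w\in W/W_{L_G}} (\text{operator for } w)$, where the $w = 1$ term (more precisely the term where $w_0 w$ lies in $W_{L_G}$, so $u w\tau^{-1}\mathcal{K}$ stays inside the parahoric) reproduces $\mathcal{T}\circ\mathrm{res}$, and all other terms factor through cosets $uw\tau^{-1}$ with $w\notin W_{L_G}$.

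The crucial point — and the main obstacle — is to show that the "extra" Weyl-coset contributions are topologically nilpotent on the relevant integral cochain complex, so that after passing to the $\mathcal{T}_0$-ordinary subspace they vanish and $e\circ\mathrm{res}$ becomes an isomorphism onto the $\mathcal{T}$-ordinary subspace at level $\Gamma_{0,1}$. This is exactly where very regularity of $\lambda$ is used: the operator attached to the coset $w\tau^{-1}$ acts on $V_{\lambda,\mathcal{O}}$ (via the normalisation $\tau^{-1}* v = p^{h_\lambda}\tau^{-1}v$ from Section 2.c) by a scalar divisible by $p^{\langle \eta,\, \lambda - w\cdot\lambda\rangle}$ (or the analogous quantity with $w_0$), and very regularity guarantees $\langle\eta, \lambda - w\cdot\lambda\rangle > 0$ for every $w\notin W_{L_G}$ since $\eta$ is strictly $Q_G$-dominant and $\lambda$ lies off every Weyl wall. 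Hence each extra term is divisible by a positive power of $p$, is therefore topologically nilpotent on the $p$-adically complete complex $\mathscr{C}^{\bullet}(\Gamma, V_{\lambda,\mathcal{O}})$, and so does not affect Pilloni's idempotent $e = \lim \mathcal{T}_0^{n!}$.

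Granting this, I would finish as follows. The map $e\circ\mathrm{res}$ sends $\mathscr{C}^{\bullet}(\Gamma, V_{\lambda,\mathcal{O}})^{\mathcal{T}_0\text{-}\mathrm{ord}}$ into $\mathscr{C}^{\bullet}(\Gamma_{0,1}, V_{\lambda,\mathcal{O}})^{\mathrm{ord}}$ because on the image the extra terms of $\mathrm{res}\circ\mathcal{T}_0$ die and $\mathcal{T}_0$ goes over to $\mathcal{T}$, so the limit idempotents match up. For surjectivity and injectivity one constructs an inverse out of $e_0 \circ \mathrm{cores}^{\Gamma}_{\Gamma_{0,1}}$ (with $e_0$ the $\mathcal{T}_0$-idempotent): the composite $\mathrm{cores}\circ\mathrm{res}$ differs from $\sum_{\text{cosets}}$, i.e. from $\mathcal{T}_0$ up to the same Weyl-coset error terms and the index $[\Gamma:\Gamma_{0,1}]$, which on the $\mathcal{T}_0$-ordinary part is a unit times $\mathcal{T}_0^{[\mathcal{K}:J']}$ for appropriate $J'$, hence invertible; and $\mathrm{res}\circ\mathrm{cores}$ is handled symmetrically at parahoric level via the same bookkeeping. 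Working degreewise this yields a chain homotopy equivalence, hence the asserted quasi-isomorphism. The only genuinely delicate step is the divisibility estimate controlling the extra Weyl cosets; everything else is careful but routine double-coset algebra, and the reduction to the $\mathrm{GSp}_4$ computation of \cite{tilurb} can be invoked once the normalisations are matched.
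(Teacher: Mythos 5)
The first half of your plan coincides with the paper's argument: the stratification \eqref{eq:8} of $\mathcal{K}\tau^{-1}\mathcal{K}$, and the estimate $h_{\lambda} - h_{\lambda_{w}} > 0$ for $w \notin W_{L}$ (this is where very regularity enters), which shows that after the $p^{h_{\lambda}}$-renormalisation every non-trivial Weyl stratum acts divisibly by $p$, so $\mathrm{res}$ intertwines $\mathcal{T}_{0}$ and $\mathcal{T}$ modulo $p$. Note, however, that this only yields a mod-$p$ congruence, which is why the paper works with the mod-$p^{r}$ complexes and finishes with a Nakayama/diagram chase and an inverse limit, rather than asserting directly that the error terms are invisible to the integral idempotent.

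The genuine gap is in your bijectivity step. The composite $\mathrm{cores}^{\Gamma}_{\Gamma_{0,1}}\circ\mathrm{res}^{\Gamma}_{\Gamma_{0,1}}$ is exactly multiplication by $[\Gamma:\Gamma_{0,1}]$, a prime-to-$p$ scalar; it is not ``$\mathcal{T}_{0}$ up to Weyl-coset error terms'', and it only gives injectivity of $e\circ\mathrm{res}$ (and surjectivity of $e_{0}\circ\mathrm{cores}$). The hard direction, surjectivity of $e\circ\mathrm{res}$, cannot be handled ``symmetrically by the same bookkeeping'': the other composite $\mathrm{res}\circ\mathrm{cores}$ on $H^{i}(\Gamma_{0,1}, V_{\lambda,1})$ is a sum of double-coset operators $[\Gamma_{0,1}\gamma\Gamma_{0,1}]$ with $\gamma$ running over $\Gamma_{0,1}\backslash\Gamma/\Gamma_{0,1}$, and these representatives lie in $\Gamma$ --- there is no $\tau$ and hence no $p$-power renormalisation --- so your divisibility mechanism gives no reason for the non-identity strata to vanish on the ordinary part, nor for the composite to be a unit times a power of $\mathcal{T}_{0}$ as you claim. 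The paper's surjectivity proof needs an additional idea: one projects coefficients to the Levi module via $\mathscr{P}: H^{i}(\Gamma_{0,1},V_{\lambda,1})\to H^{i}(\Gamma_{0,1},W_{\lambda,1})$, includes back via $\iota$ from $W_{\lambda_{w_{0}},1}$, and twists by the Atkin--Lehner-type element $\mathcal{W}=w_{0}\tau^{-1}$; since $\mathcal{W}\cdot f$ takes values in the $w_{0}$-translated weight module, only the trivial Weyl stratum survives $\mathscr{P}$, and the composite $\mathscr{P}\circ\mathrm{res}\circ\mathrm{cores}\circ\iota\circ[\mathcal{W}]$ is computed to be exactly $\mathcal{T}$, which is invertible on the ordinary part; surjectivity of $\mathrm{res}$ follows. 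This coefficient/Atkin--Lehner step is also the engine of \cite[Proposition 3.2]{tilurb} and \cite[Lemma 7.2]{hidacontrol}, so it cannot be bypassed by citing them; without it your proposed inverse built from $e_{0}\circ\mathrm{cores}$ is not justified, and the passage from mod $p$ to $\mathcal{O}$-coefficients still requires the finite-level Nakayama argument and inverse limit that conclude the paper's proof.
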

\begin{proof}
Consider the restriction map
$$
     \mathrm{res}:   \mathscr{C}^{\bullet}(\Gamma, V_{\lambda, \mathcal{O}}) \to \mathscr{C}^{\bullet}(\Gamma_{0,1}, V_{\lambda, \mathcal{O}}).
$$
and its mod $p^{r}$ reductions
$$
     \mathrm{res}_{r}:  \mathscr{C}^{\bullet}(\Gamma, V_{\lambda, r})\to \mathscr{C}^{\bullet}(\Gamma_{0,1}, V_{\lambda, r}).
$$
Write $\ell, \ell'$ respectively for the restrictions of the maps $e \circ \mathrm{res}$, $e_{0} \circ \mathrm{cores}$ to the ordinary subspaces on their respective sources, and write $\ell_{r}$ for the analagous maps 
$$
    \mathscr{C}^{\bullet}(\Gamma, V_{\lambda, r})^{\mathrm{ord}} \longleftrightarrow \mathscr{C}^{\bullet}(\Gamma_{0,1}, V_{\lambda, r})^{\mathrm{ord}}.
$$
We claim that 
$$\mathcal{T} \circ \mathrm{res}_{1} = \mathrm{res}_{1} \circ \mathcal{T}_{0}
$$
on cohomology.

To prove this it suffices to show that the strata in the decomposition \eqref{eq:8} for $[K\tau^{-1}K]$ corresponding to elements of the Weyl group non-trivial in $W/W_{L}$ when applied to elements of $\mathscr{C}^{i}(\Gamma, V_{\lambda, \mathcal{O}})$ are divisible by $p$, since the stratum corresponding to the trivial element is precisely $V_{0,1}\tau^{-1}V_{0,1}$.

Let $f \in \mathscr{C}^{i}(\Gamma, V_{\lambda, \mathcal{O}})$, then for $z \in \mathscr{C}^{i}(\Gamma)$ there are $v_{i}^{(z)} \in V_{\lambda, \mathcal{O}}$ such that 
$$
    (\mathcal{T}f)(z) = \sum_{u,w}uw\tau^{-1}v_{i}^{(z)}.
$$
Recall that we can consider the $v_{i}^{(z)}$ as functions $G(\mathcal{O}) \to W_{\lambda, \mathcal{O}}$. In this optic we have 
\begin{align*}
    (w\tau^{-1} \cdot v_{i}^{(z)})(g) &= p^{h_{\lambda}}v_{i}^{(z)}(g\tau^{-1}w) \\ 
    &= p^{h_{\lambda}}v_{i}^{(z)}(gw \tau^{-1}_{w}) \\
    &= p^{h_{\lambda} - h_{\lambda_{w^{-1}}}} (\tau^{-1}_{w} \cdot v_{i}^{(z)})(gw).
\end{align*}
We claim that $h_{\lambda} = h_{\lambda_{w^{-1}}}$ if and only if $w \in W_{L}$. We note that if $w \in W_{L}$ then 
\begin{align*}
    h_{\lambda_{w}} &= \langle \eta, \lambda_{w^{-1}} \rangle \\
    &= \langle \eta_{w}, \lambda \rangle \\
    &= \langle \eta, \lambda \rangle \\
    &= h_{\lambda},
\end{align*}
because $\eta$ takes values in $Z(L_{G})$, thus we descend to an action of $W/W_{L_{G}}$ on $\langle \eta, \lambda \rangle$.

Letting $\Phi_{L}$ be the set of simple roots of $L_{G}$ corresponding to $B_{L}$ we note that 
$$
    Z(L_{G}) = \bigcap_{\alpha \in \Phi_{L}} \mathrm{ker}(\alpha),
$$
and thus that $\langle \eta, \alpha \rangle = 0$ for $\alpha \in \Phi_{L}$. Thus for any $w \in W$, if we write $\Phi_{L}' = \Phi_{G} \backslash \Phi_{L}$ for the set of relative roots of $L_{G}$, then as $\lambda$ is dominant there are non-negative integers $n_{\alpha}$ such that 
$$
    \lambda - \lambda_{w} = \sum_{\alpha \in \Phi^{+}_{L}} n_{\alpha}\alpha + \sum_{\alpha \in (\Phi_{L}^{+})'} n_{\alpha}\alpha
$$
whence it is clear that 
$$
   h_{\lambda} \geq  h_{\lambda_{w}}
$$
with equality holding in either of two cases:
\begin{enumerate}
    \item $\lambda = \lambda_{w}$
    \item $\lambda - \lambda_{w} = \sum_{\alpha \in \Phi_{L}}n_{\alpha}\alpha$ where at least one $n_{\alpha} > 0$.
\end{enumerate}
The first case directly contradicts the very regularity of $\lambda$. In the second case we want to show that $\lambda_{w}$ being in the cone 
$$
    C = \{\lambda - \sum_{\alpha \in \Phi_{L}}n_{\alpha}\alpha: n_{\alpha} \geq 0\}
$$
implies that $w \in W_{L}$. We note that if $\lambda$ is very regular then any of its conjugates under $W$ are also regular as $W$ conjugates the interiors of the Weyl chambers. Let $\mu$ be any regular weight and note also that a simple reflection $s_{\alpha}$ corresponding to $\alpha \in \Phi_{G}$ stabilises the cone $\{\mu - n_{\alpha}\alpha: n_{\alpha} \geq 0\}$. By very regularity there is $n > 0$ such that $s_{\alpha} \cdot \mu = \mu - n\alpha$. Expanding an element of $W$ as a string of such reflections 
$$
    w = s_{\alpha_{m}}\ldots s_{\alpha_{1}},
$$
we see that $\lambda_{w} \in C$ implies that no reflection appearing in the expansion of $w$ can be in $\Phi'_{L}$, thus $w \in W_{L}$. We conclude that
$$
    w \notin W_{L} \implies uw\tau^{-1}v_{i}^{(z)} \equiv 0 \ \mathrm{mod} \ p
$$
and thus $\mathcal{T}_{0} \equiv \mathcal{T} \ \mathrm{mod} \ p$. 

In particular we have 
$$
\mathcal{T} \circ \mathrm{res}_{r} = \mathrm{res}_{1} \circ \mathcal{T}_{0},
$$
which implies that $\mathrm{res}$ maps between the ordinary subspaces for the respective Hecke operators.  
Recall the definition of the ordinary idempotents 
$$
    e_{?} = \lim_{n \to \infty}\mathcal{T}_{?}^{n!}
$$
for $? \in \{\emptyset, 0\}$.

The above result shows that 
$$
    e \circ \mathrm{res}_{1} = \mathrm{res}_{1} \circ e_{0}.
$$
To show the map $\ell_{1}'$ is surjective we note that 
$$
   \ell_{1}' \circ\ell_{1} = [\Gamma: \Gamma_{0,1}].
$$
This index is prime to $p$ so injectivity follows.

Proving surjectivity for $\ell_{1}$ is a little trickier. Define maps 
\begin{align*}
    \mathscr{P}: H^{i}(\Gamma_{0,1}, V_{\lambda, 1}) &\to H^{i}(\Gamma_{0,1}, W_{\lambda, 1}) \\
    \iota: H^{i}(\Gamma_{0,1}, W_{\lambda_{w_{0}}, 1}) &\to  H^{i}(\Gamma_{0,1}, V_{\lambda, 1})
\end{align*}
via the natural maps 
\begin{align*}
    V_{\lambda, 1} &\to W_{\lambda, 1} \\
    W_{\lambda, 1} &\to V_{\lambda, 1}
\end{align*}
given by evaluation at $1$ and inclusion respectively. These maps are intertwined by the map
$$
    [\mathcal{W}]: H^{i}(\Gamma_{0,1}, W_{\lambda, 1}) \to H^{i}(\Gamma_{0,1}, W_{\lambda_{w_{0}}, 1})
$$
induced by $\mathcal{W} = w_{0}\tau^{-1} \in \Gamma\tau^{-1}$. Recall that $\mathscr{P}$ restricts to an isomorphism on $\mathcal{T}$-ordinary subspaces.  We will show that the map $$
    \mathscr{P}\circ \mathrm{res}^{\Gamma}_{\Gamma_{0,1}} \circ \mathrm{cores}^{\Gamma}_{\Gamma_{0,1}} \circ \iota \circ [W]:  H^{i}(\Gamma_{0,1}, W_{\lambda, 1}) \to  H^{i}(\Gamma_{0,1}, W_{\lambda, 1})
$$
is a bijection on $\mathcal{T}$-ordinary subspaces, whence we can conclude that the restriction map is surjective. Note that 
\begin{align*}
    \Gamma &= \sqcup_{u,w}uw(\tau\Gamma \tau^{-1} \cap \Gamma) \\
    &= \sqcup_{u,w}uw(w_{0}\Gamma_{0,1}w_{0}^{-1}) \\
    &= \sqcup_{u,w}uww_{0}\Gamma_{0,1},
\end{align*}
where $u,w$ are as in \eqref{eq:8}, and the last line is obtained by multiplying both sides on the right by $w_{0}$ which can be assumed to be in $\Gamma$ by weak approximation. Let $f \in \mathscr{C}^{i}(\Gamma_{0,1}, W_{\lambda, 1})$, then for $z \in \mathscr{C}_{i}(\Gamma_{0,1})$ we have
$$
(\mathrm{res} \circ \mathrm{cores} \circ \iota)(\mathcal{W}\cdot f)(z) = \sum_{u,w}uww_{0}(\mathcal{W}\cdot f)(w_{0}^{-1}w^{-1}u z).
$$
Since $(\mathcal{W}\cdot f)$ takes values in $W_{\lambda_{w_{0}, 1}}$ we see that only the terms for $w = 1$ will be non-zero under $\mathscr{P}$, so 
$$
    (\mathscr{P} \circ \mathrm{res} \circ \mathrm{cores} \circ \iota)(\mathcal{W}\cdot f)(z) = \sum_{u}uw_{0}(\mathcal{W}\cdot f)(w_{0}^{-1}w^{-1}u z)
$$
but  
$$
    uw_{0}(\mathcal{W}\cdot f)(w_{0}^{-1}w^{-1}u z) = u\tau^{-1}f(\tau u^{-1} z)
$$
so the sum is just the expression for the Hecke operator $\mathcal{T}$ and this is invertible on the $\mathcal{T}$-ordinary subspace by definition.
 
Now that we know that the maps 
$$
   \ell_{1}, \ell_{1}'
$$
are both surjective, we see from the diagram
$$
\begin{tikzcd}
 & H^{i}(\Gamma, V_{\lambda,r})^{\mathrm{ord}} \arrow[dl, ""] \arrow[r, "\ell_{r}"'] \arrow[d, ""] & H^{i}(\Gamma_{0,1}, V_{\lambda,r})^{\mathrm{ord}} \arrow[l, bend right, "\ell_{r}'"] \arrow[d,""] \arrow[dr] & \\
 H^{i}(\Gamma, V_{\lambda,r})^{\mathrm{ord}}/p \arrow[r, hook]  & H^{i}(\Gamma, V_{\lambda,1})^{\mathrm{ord}}  \arrow[r, "\ell_{1}"'] & H^{i}(\Gamma_{0,1}, V_{\lambda,1})^{\mathrm{ord}} \arrow[l, bend right, "\ell_{1}'"] &  H^{i}(\Gamma_{0,1}, V_{\lambda,r})^{\mathrm{ord}}/p \arrow[l, hook]
\end{tikzcd}
$$
that we can use Nakayama's lemma to deduce that the maps $\ell_{r}, \ell_{r}'$ are also both surjective. Since these are finite sets we can deduce that $\ell_{r}$ is a bijection for all $r$ and by taking the inverse limit we get that $\ell$ is an isomorphism.
 Therefore we get the desired quasi-isomorphism
$$
    \mathscr{C}^{\bullet}(\Gamma, V_{\lambda, \mathcal{O}})^{\mathcal{T}_{0}-\mathrm{ord}} \sim \mathscr{C}^{\bullet}(\Gamma_{0,1}, V_{\lambda, \mathcal{O}})^{\mathrm{ord}}.
$$
\end{proof}
 
Let $F$ be a number field over which $G$ splits. By various results of Borel and Tits there is a finite set of primes $S$ such that $G$ has a split reductive model over the ring of $S$-integers  $\mathcal{O}_{F, S}$. We note that this implies that for every $p \notin S$ the group $G$ is unramified at $p$. 
For $\lambda \in X^{\bullet}(T_{G})$ dominant for $B_{L_{G}}$ and $\chi \in X^{\bullet}(L_{G})$ such that $\lambda + \chi$ is dominant for $G$, define 
\begin{align*}
    W_{\lambda, F,S} &=  \{f \in \mathcal{O}_{F,S}[L_{G}]: f(\bar{b}x) = \lambda(\bar{b})f(x) \ \forall \ \bar{b} \in \bar{B}_{L_{G}}\}\\
    V_{\lambda, F, S}  &= \{f \in \mathcal{O}_{F,S}[G] \otimes W_{\lambda, F,S }: f(\bar{q}g) = \chi(\bar{q})\bar{q}f(g) \ \forall \ \bar{q} \in \bar{Q}_{G}\}.
\end{align*}
Note that these $\mathcal{O}_{F,S}$ modules are independent of $p$. It is clear from our previous definition of the action of $\tau^{-1}$ that it preserves these modules and thus we have an action of $\mathcal{T}_{0}$ on the cohomology groups $H^{i}(\Gamma,  V_{\lambda, F, S})$.  For a prime $p$ write $F_{p}$ for a $p$-adic completion of $F$ and $\mathcal{O}_{F_{p}}$ for its ring of integers. 
\begin{corollary} \label{cor:4}
There is a finite set of primes $S_{\Gamma}$ containing $S$ such that for primes $p \notin S_{\Gamma}$ the cohomology groups $H^{i}(\Gamma_{0,1}, V_{\lambda, \mathcal{O}_{F_{p}}})^{\mathrm{ord}}$ are torsion-free as $\mathcal{O}$-modules.
\end{corollary}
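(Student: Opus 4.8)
The plan is to deduce the statement from Theorem \ref{thm:7}, which identifies the Iwahori-level ordinary cohomology with the spherical-level $\mathcal{T}_{0}$-ordinary cohomology, together with the elementary fact that the torsion of a finitely generated module over the Dedekind domain $\mathcal{O}_{F,S}$ is supported at only finitely many primes. Throughout I take $\lambda$ to be very regular (and dominant for $B_{G}$), as required for Theorem \ref{thm:7}.

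First I would fix the integral structures. Enlarging $S$, I may assume that $G$ has a split reductive model over $\mathcal{O}_{F,S}$ and that $V_{\lambda,F,S}$ is a finitely generated lattice stable under this model, so that for every rational prime $p\notin S$ the base change $V_{\lambda,F,S}\otimes_{\mathcal{O}_{F,S}}\mathcal{O}_{F_{p}}$ is a $G(\mathbb{Z}_{p})$-stable lattice on which the normalised action of $\tau^{-1}$ is defined and which coincides, compatibly with that action, with $V_{\lambda,\mathcal{O}_{F_{p}}}$. Establishing this comparison uniformly in $p$ — and recording the finitely many exceptional primes it forces us to discard — is the step I expect to require the most care; the rest is formal. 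Next, let $C^{\bullet}=\mathscr{C}^{\bullet}(\Gamma,V_{\lambda,F,S})$, computed from a resolution $\mathscr{C}_{\bullet}(\Gamma)$ by finite free $\mathcal{O}_{F,S}[\Gamma]$-modules as in Section \ref{sec:1b}. This is a bounded complex of finitely generated $\mathcal{O}_{F,S}$-modules, so each $H^{i}(\Gamma,V_{\lambda,F,S})$ is finitely generated over the Dedekind domain $\mathcal{O}_{F,S}$; its torsion submodule is annihilated by some nonzero element, hence supported at finitely many maximal ideals. I then take $S_{\Gamma}$ to contain $S$ together with all rational primes lying below one of these ideals, over the finitely many $i$ with $H^{i}\neq 0$.

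Now fix $p\notin S_{\Gamma}$. The map $\mathcal{O}_{F,S}\to\mathcal{O}_{F_{p}}$ is a localisation followed by a completion, hence flat, and $\mathscr{C}_{\bullet}(\Gamma)\otimes_{\mathcal{O}_{F,S}}\mathcal{O}_{F_{p}}$ is still a finite free resolution, so flat base change gives
$$
H^{i}(\Gamma,V_{\lambda,\mathcal{O}_{F_{p}}})\;\cong\;H^{i}(\Gamma,V_{\lambda,F,S})\otimes_{\mathcal{O}_{F,S}}\mathcal{O}_{F_{p}},
$$
which by the choice of $S_{\Gamma}$ is a finite free $\mathcal{O}_{F_{p}}$-module. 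By Pilloni's construction the ordinary projector $e_{0}$ attached to $\mathcal{T}_{0}$ is an $\mathcal{O}_{F_{p}}$-linear idempotent on $\mathscr{C}^{\bullet}(\Gamma,V_{\lambda,\mathcal{O}_{F_{p}}})$, hence on cohomology, so $H^{i}(\Gamma,V_{\lambda,\mathcal{O}_{F_{p}}})^{\mathcal{T}_{0}-\mathrm{ord}}$ is a direct summand of a free module over the local ring $\mathcal{O}_{F_{p}}$, hence free and in particular torsion-free. Finally, since $p\notin S$ forces $K\subseteq F_{p}$, the ring $\mathcal{O}$ embeds into $\mathcal{O}_{F_{p}}$ and Theorem \ref{thm:7}, stated over $\mathcal{O}$, base changes along the flat map $\mathcal{O}\to\mathcal{O}_{F_{p}}$ to a quasi-isomorphism whose $i$-th cohomology reads $H^{i}(\Gamma_{0,1},V_{\lambda,\mathcal{O}_{F_{p}}})^{\mathrm{ord}}\cong H^{i}(\Gamma,V_{\lambda,\mathcal{O}_{F_{p}}})^{\mathcal{T}_{0}-\mathrm{ord}}$. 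As $\mathcal{O}_{F_{p}}$ is a domain containing $\mathcal{O}$, this $\mathcal{O}_{F_{p}}$-free module is $\mathcal{O}$-torsion-free, which is the assertion. The only genuine obstacle is the integral lattice comparison of the second paragraph; everything else is the standard ``torsion over a Dedekind domain lives at finitely many primes'' argument combined with the spherical-to-Iwahori identification already established.
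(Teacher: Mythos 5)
Your proposal is correct and follows essentially the same route as the paper: compare with the $p$-independent $\mathcal{O}_{F,S}$-lattice $V_{\lambda,F,S}$, observe its cohomology has torsion at only finitely many primes of the Dedekind domain $\mathcal{O}_{F,S}$, and transfer freeness to Iwahori level via Theorem \ref{thm:7}. The only (harmless) difference is that you discard the torsion primes of the full $H^{i}(\Gamma,V_{\lambda,F,S})$ rather than of its ordinary part, which if anything sidesteps the question of defining $e_{0}$ over $\mathcal{O}_{F,S}$.
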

\begin{proof}
The cohomology group $e_{0}H^{i}(\Gamma,  V_{\lambda, F, S})$ give an $\mathcal{O}_{F,S}$-lattice in $e_{0}H^{i}(\Gamma,  V_{\lambda, \mathcal{O}_{F_{p}}})$ and thus, by the previous theorem, in $eH^{i}(\Gamma_{0,1}, V_{\lambda, \mathcal{O}_{F_{p}}})$. Since this lattice is independent of $p$ and finitely generated if we let 
$$
S_{\Gamma} = S \cup \{ \text{torsion primes in 
$e_{0}H^{i}(\Gamma,  V_{\lambda, F, S})$}\}
$$
then for all $p \notin S_{\Gamma}$ the $\mathcal{O}_{F_{p}}$-module $eH^{i}(\Gamma_{0,1}, V_{\lambda, \mathcal{O}_{F_{p}}}) = e_{0}H^{i}(\Gamma,  V_{\lambda, F, S}) \otimes \mathcal{O}_{F_{p}}$ is torsion free. 
\end{proof}
\subsection{Duality results} \label{subsec:1}

For this section only set $Q_{G} = B_{G}$. We show how to obtain a derived control result for compactly supported group cohomology. We show that the $\mathcal{O}$-linear Poincar\'e duality pairing at level $\Gamma_{0,1}$ is non-degenerate outside of a finite set of primes. Throughout this section $\lambda$ is dominant for $B_{G}$. 

\begin{definition}
For any left $\Gamma_{0,1}$-module $M$ define a complex
$$
   \mathscr{C}_{c}^{\bullet}(\Gamma_{0,1}, M) = \mathscr{C}_{2d - \bullet}(\Gamma) \otimes_{\Gamma_{0,1}} M,
$$
where $2d := \mathrm{dim}_{\mathbb{R}}G(\mathbb{R})/K_{\infty}$ for a maximal open compact subgroup $K_{\infty} \subset G(\mathbb{R})$.
\end{definition}
This complex satisfies 
$$
      H^{i}(\mathscr{C}_{c}^{\bullet}(\Gamma_{0,1}, M)) = H^{i}_{c}(\Gamma_{0,1}, M)
$$

We can define a Hecke operator $\mathcal{T}$ on $\mathscr{C}_{c}^{\bullet}(\Gamma_{0,1}, M)$ in the same way as for $\mathscr{C}^{\bullet}(\Gamma_{0,1}, M)$ and whence define the ordinary part of the complex, denoted $\mathscr{C}_{c}^{\bullet}(\Gamma_{0,1}, M)^{\mathrm{ord}}$ and uniquely defined up to homotopy equivalence. 

\begin{remark}
As modules we have 
$$
      \mathscr{C}_{c}^{i}(\Gamma_{0,1}, M) =   \mathscr{C}^{i}(\Gamma_{0,1}, M).
$$
In particular $ \mathscr{C}_{c}^{\bullet}(\Gamma_{0,1}, \mathbb{D}_{\mathrm{univ}}(\lambda))^{\mathrm{ord}}$  is a perfect complex of $\Lambda_{0}$-modules.
\end{remark}

\begin{definition}
We define a pairing of $\mathcal{O}$-modules
$$
    ( -, -)_{r}: H_{c}^{i}(\Gamma_{1,r}, (V_{\lambda, \mathcal{O}})^{\vee}) \otimes_{\mathcal{O}} H^{2d - i}(\Gamma_{1,r}, V_{\lambda, \mathcal{O}}) \to \mathcal{O}.
$$
by 
$$
    (x, y)_{r} = \varphi_{M}(x \cup \mathcal{W}(y))
$$
where $\varphi_{\lambda}$ is the natural map $V_{\lambda, \mathcal{O}} \otimes V_{\lambda, \mathcal{O}} \to \mathcal{O}$.
\end{definition}

We note that $(V_{\lambda, \mathcal{O}})^{\vee} \neq V_{\lambda, \mathcal{O}}$; we can at most say that
$$
(V_{\lambda, \mathcal{O}})^{\vee} \subset V_{\lambda, \mathcal{O}}.
$$

\begin{lemma} \label{lem:25}
Let $V_{\lambda, \mathcal{O}}^{\mathrm{min}}$ be the minimal admissible lattice in $V_{\lambda}$. There is an isomorphism 
$$
    \mathscr{C}^{i}(\Gamma_{0,1}, V_{\lambda, \mathcal{O}}^{\mathrm{min}})^{\mathrm{ord}} \cong  \mathscr{C}^{i}(\Gamma_{0,1}, V_{\lambda, \mathcal{O}})^{\mathrm{ord}}
$$
\end{lemma}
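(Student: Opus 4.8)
The statement, Lemma \ref{lem:25}, claims that computing ordinary cohomology of $\Gamma_{0,1}$ with coefficients in the minimal admissible lattice $V_{\lambda,\mathcal{O}}^{\mathrm{min}}$ gives the same complex (up to isomorphism) as with the lattice $V_{\lambda,\mathcal{O}}$ constructed in the paper. I need to sketch why passing to the ordinary part kills the difference between these two lattices.

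**Plan.** The plan is to exploit that any two admissible $\mathcal{O}$-lattices in the irreducible $K$-representation $V_\lambda$ become commensurable: both $V_{\lambda,\mathcal{O}}^{\mathrm{min}}$ and $V_{\lambda,\mathcal{O}}$ sit inside $V_\lambda$, and after scaling there are inclusions $p^{k}V_{\lambda,\mathcal{O}} \subseteq V_{\lambda,\mathcal{O}}^{\mathrm{min}} \subseteq V_{\lambda,\mathcal{O}}$ (reindexing so the minimal lattice is the smaller one, which is what ``minimal'' should mean here — it is the $L_{G}$-span, equivalently $G$-span, of a lowest weight vector $f^{\mathrm{lw}}_{\lambda}$). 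Then I would run the argument in the same spirit as Proposition \ref{prop:6}: the cokernel $N := V_{\lambda,\mathcal{O}}/V_{\lambda,\mathcal{O}}^{\mathrm{min}}$ is a finite $\mathcal{O}$-module, killed by $p^{k}$, and I claim $\mathscr{C}^{\bullet}(\Gamma_{0,1}, N)^{\mathrm{ord}} = 0$, so the short exact sequence of coefficient modules gives the desired isomorphism on ordinary complexes. The first step is thus: reduce to showing $e$ annihilates $\mathscr{C}^{\bullet}(\Gamma_{0,1}, N)$ where $N$ is any subquotient of $V_{\lambda,\mathcal{O}}$ supported on weights strictly above the lowest weight.

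**The key vanishing.** The heart of the matter, exactly as in Proposition \ref{prop:6}, is the computation of how $\tau$ acts. Recall that $\tau^{-1}\ast v = p^{h_\lambda}\tau^{-1}v$ and, in the realization of $V_{\lambda,\mathcal{O}}$ as $W_{\lambda,\mathcal{O}}$-valued functions on $G(\mathcal{O})$, the action of $\mathcal{T}^{k}$ on a cochain produces terms $\sum \gamma_i \tau^{-k}\ast \phi_i^{(c)}$, and $(\tau^{k}\ast f)(\bar n \ell n) = p^{-kh_\lambda}f(\tau^{-k}\bar n\tau^{k}\,\ell\, n)$ with $\tau^{-k}\bar N_0 \tau^{k} = \bar N_k \equiv 1 \bmod p^{s}$ for $k \ge s$. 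The point is that the minimal lattice is precisely the submodule of functions whose restriction to the big cell is constant along $\bar N_G$ (i.e. determined by their values on $L_G N_G$, pulling back a lowest-weight function on $W_\lambda$); dually, $N$ is a quotient detecting the ``$\bar N_G$-direction'', and the estimate above shows that for $k\ge s$ the operator $\mathcal{T}^{k}$ sends $\mathscr{C}^{\bullet}(\Gamma_{0,1}, V_{\lambda,s})$ into the image of $\mathscr{C}^{\bullet}(\Gamma_{0,1}, V_{\lambda,\mathcal{O}}^{\mathrm{min}}\otimes\mathcal{O}_s)$, hence $\mathcal{T}^{k}$ kills $\mathscr{C}^{\bullet}(\Gamma_{0,1}, N\otimes\mathcal{O}_s)$; taking $e = \lim \mathcal{T}^{n!}$ and passing to the limit over $s$ gives $\mathscr{C}^{\bullet}(\Gamma_{0,1}, N)^{\mathrm{ord}} = 0$. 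I would then conclude by applying $\mathscr{C}^{\bullet}(\Gamma_{0,1},-)^{\mathrm{ord}}$ (an exact functor on the relevant short exact sequences of compact coefficient modules, since $e$ is an idempotent and ordinary parts of the finite quotients are exact) to $0 \to V_{\lambda,\mathcal{O}}^{\mathrm{min}} \to V_{\lambda,\mathcal{O}} \to N \to 0$.

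**Main obstacle.** The genuinely delicate point is pinning down the intrinsic description of the minimal admissible lattice $V_{\lambda,\mathcal{O}}^{\mathrm{min}}$ and checking that the difference $N$ really is concentrated in the ``wrong'' directions for the $\tau$-dynamics — i.e. that $N$ is built out of weight spaces strictly below the top (equivalently, strictly above the lowest weight in the contragredient picture), which is what makes the $p^{h_\lambda - h_{\mu}}$ gap appear and forces $\mathcal{T}$-divisibility by $p$. This is the analogue of the ``very regular'' weight computation in Theorem \ref{thm:7} and the Bruhat-cell estimate in Proposition \ref{prop:6}, and it is essentially the same mechanism; once that structural fact about the two lattices is in hand, the vanishing and the conclusion are formal. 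I expect the cleanest writeup simply cites \cite[4.2]{LZGsp} for the lattice comparison and then reruns the Proposition \ref{prop:6} estimate verbatim.
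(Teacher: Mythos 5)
Your overall strategy is the paper's: pass to the ordinary part and use that the $*$-action of $\tau^{-1}$ contracts everything off the highest-weight line. Your packaging via the short exact sequence $0 \to V_{\lambda,\mathcal{O}}^{\mathrm{min}} \to V_{\lambda,\mathcal{O}} \to N \to 0$ and the vanishing $\mathscr{C}^{\bullet}(\Gamma_{0,1},N)^{\mathrm{ord}}=0$ is equivalent to what the paper proves directly, namely that $\tau^{-r}*V_{\lambda,\mathcal{O}}\subset V_{\lambda,\mathcal{O}}^{\mathrm{min}}$ for $r\gg 0$; given that inclusion, your formal argument with the idempotent $e$ (exactness termwise, $e$ a direct-summand projector) closes the proof just as the paper's final sentence does.

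However, the step you flag as the ``main obstacle'' is the entire content of the lemma, and the route you sketch toward it is not the right one. The minimal admissible lattice is \emph{not} the submodule of functions on the big cell that are constant along $\bar{N}_{G}$: that submodule has rank $\operatorname{rk}W_{\lambda,\mathcal{O}}$, whereas $V_{\lambda,\mathcal{O}}^{\mathrm{min}}$ is a full-rank lattice in $V_{\lambda}$ (the span of a highest-weight vector under the hyperalgebra and $G(\mathcal{O})$), open in $V_{\lambda,\mathcal{O}}$. Likewise the congruence $\tau^{-k}\bar{N}_{0}\tau^{k}\equiv 1 \bmod p^{s}$ from Proposition \ref{prop:6} is not the relevant estimate here. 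What is actually needed, and what the paper uses, is the weight-space argument: in this section $Q_{G}=B_{G}$, so strict dominance of $\eta$ gives $h_{\lambda}-h_{\mu}>0$ for every weight $\mu\neq\lambda$ of $V_{\lambda}$; any admissible lattice is the direct sum of its intersections with the $T_{G}$-weight spaces, and both lattices meet the highest-weight line in $\mathcal{O}\cdot v_{\lambda}$, so $N$ is supported on weights $\mu\neq\lambda$; since $\tau^{-1}*$ scales the $\mu$-component by $p^{h_{\lambda}-h_{\mu}}$, one gets $\tau^{-r}*N=0$ (equivalently $\tau^{-r}*V_{\lambda,\mathcal{O}}\subset V_{\lambda,\mathcal{O}}^{\mathrm{min}}$) for $r\gg0$. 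You do name this $p^{h_{\lambda}-h_{\mu}}$ mechanism at the end, so the gap is one of execution rather than conception, but as written the central fact is asserted rather than proved, and the two tools you lean on (the ``constant along $\bar{N}_{G}$'' description and the Proposition \ref{prop:6} congruence) would not deliver it.
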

\begin{proof}
  The strict dominance requirement for $\eta$ forces 
\begin{equation} \label{val:1}
    h_{\lambda} - h_{\mu}> 0
\end{equation}
for all characters $\mu \neq \lambda$ of $T_{G}$ appearing in $V_{\lambda}$. We recall that, given a choice of highest weight vector $v_{\lambda}$, an admissible lattice in $V_{\lambda}$ is the direct sum of its intersections with the weight spaces for $T_{G}$, and the highest weight spaces are the $\mathcal{O}$-linear span of $v_{\lambda}$. By construction of the $*$ action, $\tau^{-1}$ fixes $\lambda$ and for $\mu \neq \lambda$ we have that $\tau^{-r}*(V_{\lambda,\mathcal{O}})_{\mu} = p^{r(h_{\lambda} - h_{\mu})}(V_{\lambda,\mathcal{O}})_{\mu}$. By \eqref{val:1} we see that 
$$
    \lim_{r} \tau^{-r} * V_{\lambda, \mathcal{O}} = \mathcal{O} \cdot v_{\lambda}.
$$
In particular, as $V_{\lambda, \mathcal{O}}^{\mathrm{min}}$ is an open subgroup of $V_{\lambda, \mathcal{O}}$ we see that for $r  \gg 0$: 
$$
    \tau^{-r} * V_{\lambda, \mathcal{O}} \subset V_{\lambda, \mathcal{O}}^{\mathrm{min}}.
$$
It's easy to see that this implies the result.
\end{proof}

We now discuss the Hecke action on $(-,-)_{r}$. We define an `adjoint' Hecke operator $\mathcal{T}^{*}$ defined by 
$$
    \mathrm{cores}_{\Gamma' \cap \tau^{-1}\Gamma\tau}^{\Gamma'} \circ [\tau] \circ \mathrm{res}^{\Gamma}_{\tau\Gamma'\tau^{-1} \cap \Gamma}
$$
and with action on $V_{\lambda}^{\vee}$ given by $\tau * v = p^{-h_{\lambda}}\tau v$. 
This satisfies 
$$
    \mathcal{T} x \cup y = x \cup  \mathcal{T}^{*}y.
$$
\begin{lemma}
The ordinary idempotent $e_{\mathrm{ord}}$ is self-adjoint for the pairing $(-,-)_{r}$.
\end{lemma}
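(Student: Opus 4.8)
The plan is to unwind the definitions of the pairing and of $\mathcal{T}^*$ and to reduce the self-adjointness of $e_{\mathrm{ord}}$ to the adjunction $\mathcal{T}x \cup y = x \cup \mathcal{T}^* y$ already recorded, together with a compatibility statement for the cup product and the involution $\mathcal{W}$. Concretely, recall that $e = \lim_n \mathcal{T}^{n!}$ and, dually, the adjoint idempotent should be $e^* = \lim_n (\mathcal{T}^*)^{n!}$. First I would check that $e^*$ is indeed the idempotent cutting out the ordinary part of $\mathscr{C}^\bullet(\Gamma_{1,r}, (V_{\lambda,\mathcal{O}})^\vee)$ — this is a direct translation of the Pilloni formalism applied to $\mathcal{T}^*$, using that $(V_{\lambda,\mathcal{O}})^\vee$ sits inside $V_{\lambda,\mathcal{O}}$ and that the $*$-action of $\tau$ (rather than $\tau^{-1}$) preserves it, exactly as in Lemma \ref{lem:25} but with the roles of highest and lowest weight interchanged.

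Next I would iterate the adjunction $\mathcal{T}x \cup y = x \cup \mathcal{T}^* y$ to obtain $\mathcal{T}^{n!} x \cup y = x \cup (\mathcal{T}^*)^{n!} y$ for all $n$, and then pass to the limit. For this limiting step I need the cup product pairing $H^i_c(\Gamma_{1,r},(V_{\lambda,\mathcal{O}})^\vee) \times H^{2d-i}(\Gamma_{1,r}, V_{\lambda,\mathcal{O}}) \to \mathcal{O}$, followed by $\varphi_\lambda$, to be continuous for the $p$-adic topology on both sides; since all the groups involved are finitely generated $\mathcal{O}$-modules and the Hecke operators act continuously, this is automatic. Hence $e \cdot x \cup y = x \cup e^* \cdot y$, i.e. $(e x, y)_r = (x, e y)_r$ provided the $\mathcal{W}$-twist in the definition of $(-,-)_r$ is compatible with replacing $e$ on the left by $e^*$ on the right. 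So the one genuine computation is to verify $\varphi_\lambda(\mathcal{T}^* x \cup \mathcal{W}(y)) = \varphi_\lambda(x \cup \mathcal{W}(\mathcal{T} y))$, which reduces to the relation between $\mathcal{W} = w_0\tau^{-1}$, the operator $\mathcal{T} = [\Gamma\tau^{-1}\Gamma]$ and its adjoint $\mathcal{T}^* = [\Gamma\tau\Gamma]$: conjugation by $w_0$ sends $\tau$ to a $\bar B_G$-positive element, so $\mathcal{W} \circ \mathcal{T} \circ \mathcal{W}^{-1}$ acts on cohomology as $\mathcal{T}^*$ up to the normalising power of $p$ built into the two $*$-actions, and these powers cancel because $h_\lambda$ appears with opposite sign in the two normalisations.

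I would organise the write-up as: (1) identify $e^*$ with the ordinary projector for $\mathcal{T}^*$ on $(V_{\lambda,\mathcal{O}})^\vee$-cohomology; (2) from $\mathcal{T} x \cup y = x \cup \mathcal{T}^* y$ deduce $\mathcal{T}^{n} x \cup y = x \cup (\mathcal{T}^*)^{n} y$ and take $p$-adic limits over the factorials to get $e x \cup y = x \cup e^* y$; (3) verify the $\mathcal{W}$-intertwining $\mathcal{W}\mathcal{T}\mathcal{W}^{-1} = \mathcal{T}^*$ on cohomology, so that $\varphi_\lambda(e^* x \cup \mathcal{W} y) = \varphi_\lambda(x \cup \mathcal{W} e y)$; (4) combine to conclude $(e_{\mathrm{ord}} x, y)_r = (x, e_{\mathrm{ord}} y)_r$, and since $e_{\mathrm{ord}}$ is idempotent this is exactly self-adjointness. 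The main obstacle I anticipate is step (3): keeping track of the $p$-power normalisations in the two different $*$-actions (the $p^{h_\lambda}\tau^{-1}$ action on $V_{\lambda,\mathcal{O}}$ versus the $p^{-h_\lambda}\tau$ action on $(V_{\lambda,\mathcal{O}})^\vee$) and checking that the $w_0$-conjugate of a strictly $Q_G$-dominant cocharacter is strictly $\bar Q_G$-dominant, so that $\mathcal{T}^*$ is genuinely the ``opposite'' Hecke operator and its ordinary projector exists; everything else is formal from the already-established adjunction.
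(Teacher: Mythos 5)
Your overall route is the paper's: combine the adjunction $\mathcal{T}x\cup y=x\cup\mathcal{T}^{*}y$ (so that $e=\lim\mathcal{T}^{n!}$ is adjoint to $e^{*}=\lim(\mathcal{T}^{*})^{n!}$, which is the formal content of your steps (1)--(2)) with a conjugation-by-$\mathcal{W}$ statement that moves $\mathcal{T}^{*}$ back to the other side of the $\mathcal{W}$-twist. The one place where your plan diverges is step (3), and as stated it is too strong: you claim the exact intertwining $\mathcal{W}\mathcal{T}\mathcal{W}^{-1}=\mathcal{T}^{*}$ on cohomology (up to the $p^{\pm h_{\lambda}}$ normalisations). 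Computing double cosets, $\mathcal{W}^{-1}\,\Gamma\tau\Gamma\,\mathcal{W}=\Gamma\, w_{0}^{-1}\tau w_{0}\,\Gamma$, and this equals $\Gamma\tau^{-1}\Gamma$ only when $w_{0}$ carries $\eta$ to $\eta^{-1}$, i.e.\ when $-w_{0}$ fixes the chosen cocharacter. For a general reductive group with $-w_{0}\neq\mathrm{id}$ on cocharacters (already $\mathrm{SL}_{n}$, $n\geq 3$, with an asymmetric choice of $\eta$) the conjugated operator is genuinely a different Hecke operator, so the identity you want to ``verify'' in step (3) would fail, and no cancellation of $p$-powers repairs it.

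What is true, and what the paper uses, is weaker and suffices: $w_{0}^{-1}\tau w_{0}$ is the image of $p$ under a cocharacter that pairs strictly positively with all positive roots, so $[\Gamma\, w_{0}^{-1}\tau w_{0}\,\Gamma]$ is again a ``$U_{p}$-type'' operator and defines the \emph{same} ordinary idempotent $e_{\mathrm{ord}}$ as $[\Gamma\tau^{-1}\Gamma]$ (independence of the ordinary projector from the choice of strictly dominant element, a standard fact in Hida theory that you should cite or prove rather than the operator-level identity). With that substitution your argument closes exactly as the paper's does: $(ex,y)_{r}=\varphi_{\lambda}(x\cup e^{*}\mathcal{W}y)=\varphi_{\lambda}\bigl(x\cup\mathcal{W}(\mathcal{W}^{-1}e^{*}\mathcal{W})y\bigr)=(x,e_{\mathrm{ord}}y)_{r}$, since $\mathcal{W}^{-1}e^{*}\mathcal{W}$ is the idempotent of a strictly dominant element and hence is $e_{\mathrm{ord}}$. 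Your closing remark about checking that the $w_{0}$-conjugate of a strictly dominant cocharacter is strictly dominant for the opposite parabolic is exactly the right check; the error is only in concluding from it that the conjugate of $\mathcal{T}$ \emph{is} $\mathcal{T}^{*}$ rather than merely an operator with the same ordinary projector.
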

\begin{proof}
Note that $\mathcal{W}$ normalizes $V_{r}$. Thus
\begin{align*}
    \mathcal{W}^{-1}\Gamma \tau \Gamma \mathcal{W} &= \tau w_{0}^{-1} \Gamma \tau \Gamma w_{0} \tau^{-1} \\
    &=  \Gamma \tau w_{0}^{-1} \tau  w_{0} \tau^{-1} \Gamma \\
    &= \Gamma w_{0}^{-1}\tau w_{0} \Gamma.
\end{align*}
The element $w_{0}^{-1}\tau w_{0} \in T(\mathbb{Q}_{p})$ is the image of $p$ under the cocharacter $\eta^{-1}_{w_{0}}$ which satsfies $\langle \eta^{-1}_{w_{0}}, \alpha \rangle > 0$ for all positive roots $\alpha$. Thus $\Gamma w_{0}^{-1}\tau w_{0} \Gamma$ defines the same idempotent $e_{\mathrm{ord}}$ as $\Gamma \tau^{-1} \Gamma$ so we are done. 
\end{proof}

\begin{proposition}
There is a cofinite set of primes for which the pairing 
$$
    (-,-)_{r}:   H^{i}_{c}(\Gamma_{1,r}, V_{\lambda, \mathcal{O}})^{\mathrm{ord}} \otimes   H^{2d - i}(\Gamma_{1,r}, V_{\lambda, \mathcal{O}})^{\mathrm{ord}} \to \mathcal{O}
$$
is non-degenerate. 
\end{proposition}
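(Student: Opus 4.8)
\emph{The plan.} I would reduce the non-degeneracy of $(-,-)_{r}$ to an $\mathcal{O}$-torsion-freeness statement for the ordinary cohomology at level $\Gamma_{1,r}$, and then establish that torsion-freeness for all but finitely many $p$ by the method of Corollary \ref{cor:4}.

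\emph{Step 1: reduction to torsion-freeness.} Since $\mathscr{C}_{\bullet}(\Gamma)$ is finite free over $\mathcal{O}[\Gamma_{1,r}]$, there is a tautological identification of complexes of finite free $\mathcal{O}$-modules
\[
    \mathscr{C}^{\bullet}(\Gamma_{1,r},(V_{\lambda,\mathcal{O}})^{\vee}) \;=\; \mathrm{Hom}_{\mathcal{O}}\bigl(\mathscr{C}_{c}^{2d-\bullet}(\Gamma_{1,r},V_{\lambda,\mathcal{O}}),\mathcal{O}\bigr),
\]
compatible with the Hecke action, hence with the idempotent $e_{\mathrm{ord}}$ (self-adjoint for $(-,-)_{r}$ by the preceding lemma), and with cup product and $\varphi_{\lambda}$. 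Inverting $p$ and using that $[\mathcal{W}]$ acts invertibly on cohomology over $K$ (because $w_{0}\tau^{-1}$ acts invertibly on $V_{\lambda,\mathcal{O}}\otimes K$), one sees that $(-,-)_{r}\otimes_{\mathcal{O}}K$ is perfect. Integrally, by Lemma \ref{lem:25} the ordinary parts are insensitive to the choice of admissible lattice, so on ordinary subspaces I may identify $H^{\bullet}(\Gamma_{1,r},(V_{\lambda,\mathcal{O}})^{\vee})^{\mathrm{ord}}$ with $H^{\bullet}(\Gamma_{1,r},V_{\lambda,\mathcal{O}})^{\mathrm{ord}}$, which is the form in which the Proposition is phrased. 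As $(-,-)_{r}$ is $\mathcal{O}$-bilinear with values in the torsion-free ring $\mathcal{O}$, the $\mathcal{O}$-torsion submodules of $H^{i}_{c}(\Gamma_{1,r},V_{\lambda,\mathcal{O}})^{\mathrm{ord}}$ and $H^{2d-i}(\Gamma_{1,r},V_{\lambda,\mathcal{O}})^{\mathrm{ord}}$ lie in the left and right radicals; conversely, if both groups are torsion-free then they are free (finitely generated over the discrete valuation ring $\mathcal{O}$), and the $K$-perfectness just established forces equal ranks and a nonsingular Gram matrix, so both radicals vanish. Hence the Proposition is equivalent to the assertion that, for $p$ outside a finite set, $H^{j}(\Gamma_{1,r},V_{\lambda,\mathcal{O}})^{\mathrm{ord}}$ is $\mathcal{O}$-torsion-free for all $j$; the compactly supported groups are then torsion-free too, via the universal coefficients sequence attached to the displayed duality (the $\mathrm{Ext}^{1}_{\mathcal{O}}$-term feeding $H^{i}_{c}$ being the torsion of $H^{2d-i+1}(\Gamma_{1,r},V_{\lambda,\mathcal{O}})^{\mathrm{ord}}$).

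\emph{Step 2: torsion-freeness for almost all $p$.} I would adapt Corollary \ref{cor:4}. The $\mathcal{O}_{F,S}$-module $V_{\lambda,F,S}$ is independent of $p$; Theorem \ref{thm:7} identifies $e_{0}H^{j}(\Gamma,V_{\lambda,F,S})\otimes_{\mathcal{O}_{F,S}}\mathcal{O}_{F_{p}}$ with $H^{j}(\Gamma_{0,1},V_{\lambda,\mathcal{O}_{F_{p}}})^{\mathrm{ord}}$; and the control theorem (Theorem \ref{thm:5}, with $\chi=0$) presents $R\Gamma(\Gamma_{1,r},V_{\lambda,\mathcal{O}})^{\mathrm{ord}}$ as $M_{\lambda}^{\bullet}\otimes^{L}_{\Lambda_{r}}\mathcal{O}$, where $M_{\lambda}^{\bullet}$ is perfect with terms free over $\Lambda_{r}$, hence finite free over $\mathcal{O}$. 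Tracing the $\mathcal{O}_{F,S}$-lattice through these identifications, and through the Shapiro/Hida chain of Lemma \ref{lem:5}, should produce a finitely generated integral structure on $H^{j}(\Gamma_{1,r},V_{\lambda,\mathcal{O}})^{\mathrm{ord}}$ whose formation commutes with base change along $\mathcal{O}_{F,S}\to\mathcal{O}_{F_{p}}$. A finitely generated module over a Dedekind domain has torsion at only finitely many primes, so enlarging $S_{\Gamma}$ to contain those primes --- together with those dividing the order of the finite $p$-group $S_{r}(\mathbb{Z}_{p})\backslash S_{G}(\mathbb{Z}_{p})$, which intervenes when comparing the levels $\Gamma_{0,1}$ and $\Gamma_{1,r}$ --- yields the required torsion-freeness for $p\notin S_{\Gamma}$, and with Step 1 completes the proof.

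\emph{Main obstacle.} The real work will be in Step 2: $\Gamma_{1,r}$ itself depends on $p$, so a ``$p$-independent lattice'' cannot be written down directly and has to be routed through the control and $p$-stabilisation results of the preceding sections; moreover one must check that the integral lattices at the levels $\Gamma$, $\Gamma_{0,1}$ and $\Gamma_{1,r}$ are matched compatibly with the cup-product pairing, so that the nonsingularity over $K$ found in Step 1 genuinely applies to the ordinary summands being paired. Step 1 is routine once the compatibility of the chain-level duality with the Hecke action, $e_{\mathrm{ord}}$, $\varphi_{\lambda}$ and $[\mathcal{W}]$ is spelled out.
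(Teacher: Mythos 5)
Your proposal follows essentially the same route as the paper: compare $(-,-)_{r}$ with the classical Poincar\'e duality pairing (non-degenerate modulo torsion), use the twist by $\mathcal{W}$ together with the adjointness/self-adjointness of the ordinary idempotent so that nothing is lost on restricting to ordinary subspaces, and reduce non-degeneracy to $\mathcal{O}$-torsion-freeness of the ordinary cohomology for $p$ outside a finite set, supplied by the $p$-independent-lattice argument of Corollary \ref{cor:4}. The only divergence is that the paper invokes Corollary \ref{cor:4} (and its compactly supported analogue) directly at level $\Gamma_{1,r}$, so the transfer you flag in Step 2 as the main obstacle is precisely the point the paper treats as already settled.
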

\begin{proof}
Note that $\mathcal{W}$ induces an isomorphism
$$
     H^{j}(\Gamma_{1,r}, V_{\lambda, \mathcal{O}})^{\mathrm{ord}} \cong  H^{j}(\Gamma_{1,r}, V_{\lambda, \mathcal{O}})^{\mathcal{T}^{*}-\mathrm{ord}}
$$
where the latter is the ordinary subspace for the adjoint Hecke operator $\mathcal{T}^{*}$. Thus non-degeneracy of the pairing is implied by non-degeneracy of the standard Poincar\'e duality pairing because this restricts to a pairing between the ordinary and $\mathcal{T}^{*}$-ordinary subspaces. This pairing is well-known to descend to a non-degenerate pairing 
$$
    H^{i}_{c}(\Gamma_{1,r}, V_{\lambda, \mathcal{O}})/(\mathrm{tors})\otimes   H^{2d - i}(\Gamma_{1,r}, V_{\lambda, \mathcal{O}})/(\mathrm{tors}) \to \mathcal{O}.
$$
We know from Corollary \ref{cor:4} (and the analagous version for compactly supported cohomology) that the groups $H^{j}_{c}(\Gamma_{1,r}, V_{\lambda, \mathcal{O}})^{\mathrm{ord}}, H^{j}(\Gamma_{1,r}, V_{\lambda, \mathcal{O}})^{\mathrm{ord}}$ are free $\mathcal{O}$-modules for a cofinite set of primes and thus the result follows.
\end{proof}
Thus the pairing $(-,-)_{r}$ descends to a pairing 
$$
        H^{i}_{c}(\Gamma_{1,r}, V_{\lambda, \mathcal{O}})^{\mathrm{ord}} \otimes   H^{2d - i}(\Gamma_{1,r}, V_{\lambda, \mathcal{O}})^{\mathrm{ord}} \to \mathcal{O}
$$
which is non-degenerate outside of a finite set of primes $S_{\Gamma}$.

\begin{remark}
Although one can extend the proof of Lemma \ref{lem:25} to the situation that $\lambda$ is a character of $L_{G}$ for a general parabolic $Q_{G}$, a generalisation of the duality pairing to general parabolics is hampered by the fact that the `Atkin-Lehner' style operator $\mathcal{W}$ does not necessarily preserve Levi subgroups.
\end{remark}
\section{Adèlic cohomology and Hecke algebras} \label{sec:5}
\subsection{Adèlic cohmology and localisations}

For $U =U_{?,r} \subset G(\mathbb{A}_{f})$, let $\mathbb{T}(U)$ be the Hecke algebra of locally constant $U$-biinvariant functions on $G(\mathbb{A}_{f})$. It is generated by indicator functions on double cosets $U\alpha U$, $\alpha \in G(\mathbb{A}_{f})$. We write $\mathbb{T}^{S}(U)$ for the Hecke algebra restricted to places at which $U$ is hyperspecial and let
$$
\mathbb{T}^{S}_{p}(U) = \langle \mathbb{T}^{S}(U), \mathcal{T} \rangle \subset \mathbb{T}(U).
$$
This subalgebra is well known to be commutative. We will often drop the open compact $U$ from the notation. Assuming $G^{\mathrm{der}}$ sastisfies strong approximation, there is a finite set $\{t_{1}, \ldots, t_{n}\}$ such that 
$$
    G(\mathbb{A}_{f}) = \sqcup_{i} G(\mathbb{Q}) \times t_{i} U,
$$
and if we define 
$$
    \Gamma(t_{i}, U) := t_{i}Ut_{i}^{-1} \cap G(\mathbb{Q}) 
$$
then for any $\mathcal{O}[U]$-module $M$ with a compatible $\tau^{-1}$ action, $\mathbb{T}_{p}^{S}(U)$ acts naturally on the complex
$$
    R\Gamma(U, M) := \bigoplus_{i} \mathscr{C}^{\bullet}(\Gamma(t_{i}, U), M).
$$
It can be shown that the image of this complex in the homotopy category does not depend on the choice of $t_{i}$. We can define the double coset action (and thus the action of $\mathbb{T}_{p}^{S}(U)$) in terms of non-adèlicised Hecke operators (defined in Section \ref{sec:1b}) on summands in the following way: 

 Let $x \in G(\mathbb{A}_{f})$ have $p$-part $\tau^{-1}$ and let  $xt_{i} = \gamma_{x,i}t_{j_{i}} k \in G(\mathbb{Q})t_{j_{i}}U$. The action is then given by:
$$
 [UxU] = \bigoplus_{i} [\Gamma(t_{i}, U) \gamma_{x, i}^{-1} \Gamma(t_{j_{i}}, U)].
$$
As $\mathbb{T}_{p}^{S}(U)$ is generated by such operators, this description suffices to define the action of $\mathbb{T}_{p}^{S}(U)$.

The definition of these coset operators as given by \eqref{eq:13} in terms of functorial morphisms shows that the double cosets $[V_{1,r}xV_{1,r}]$ for $r \geq 1$ commute with corestriction maps and thus define Hecke operators on the inverse limit cohomology 
$$
    R\Gamma(V_{1,\infty}, M) := \varprojlim_{r}R\Gamma(V_{1,r}, M).
$$
In particular we get an action of $\mathcal{T}$ and can use this to define the ordinary complex $ R\Gamma(V_{1,\infty}, M)^{\mathrm{ord}}$ defined uniquely up to homotopy and satisfying
$$
  R\Gamma(V_{1,\infty}, M)^{\mathrm{ord}} =  \varprojlim_{r}R\Gamma(V_{1,r}, M)^{\mathrm{ord}} 
$$
in the homotopy category.

Since by weak approximation we can choose $t_{i} \in G(\hat{\mathbb{Z}})$ to be trivial at $p$, by looking on individual summands we obtain derived control theorems for this `adèlic cohomology'. 

For $? \in \{\emptyset, p\}$, write $\mathbb{T}^{S}_{?}(U, M)$ for the image of $\mathbb{T}^{S}_{?} \otimes R$ in $\mathrm{End}(R\Gamma(U, M))$.
Write $\mathbb{T}_{\lambda}^{S}:= \mathbb{T}^{S}(V_{0,1}, \mathbb{D}_{\mathrm{univ}})^{\mathrm{ord}}$, where the superscript ord refers to restricting the ordinary subspace. This is well-defined as $\mathbb{T}^{S}_{p}$ is commutative and acts continuously. We prove a variant of the derived control theorem for localisations by ideals of $\mathbb{T}^{S}_{\lambda}$. The proof of the following theorem is pretty much identical to \cite[Theorem 5.1]{ashpol}.

\begin{theorem}
There is a natural bijection (not an isomorphism of schemes)
$$
    \mathrm{Spec} \ \mathbb{T}_{\lambda}^{S}/I^{(\chi)}_{r} \leftrightarrow \mathrm{Spec} \ \mathbb{T}^{S}(V_{1,r}, V_{\lambda + \chi, \mathcal{O}})^{\mathrm{ord}}.
$$
\end{theorem}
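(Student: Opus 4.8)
The plan is to deduce a natural surjection of rings from the (adèlic) derived control theorem and then show that its kernel is nilpotent; since a surjection of commutative rings induces a bijection on prime spectra precisely when its kernel lies in the nilradical of the source, this suffices, and the possible nontriviality of this kernel is exactly why one does not get an isomorphism of schemes.

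First I would record that all the quasi-isomorphisms constructed in Section \ref{sec:3} are $\mathbb{T}^{S}$-equivariant: they are assembled from restriction, corestriction, Shapiro and specialisation maps, each of which commutes with the $\Lambda_{0}$-action, with $\mathcal{T}$, and hence with the ordinary projector. Writing $M_{\lambda}^{\bullet} = R\Gamma(V_{0,1}, \mathbb{D}_{\mathrm{univ}}(\lambda))^{\mathrm{ord}}$, which is perfect over $\Lambda_{0}$ and concentrated in degrees $[0,\nu]$, the resulting Hecke-equivariant quasi-isomorphism $M_{\lambda}^{\bullet} \otimes^{L}_{\Lambda_{r}}\mathcal{O}^{(\chi)} \sim R\Gamma(V_{1,r}, V_{\lambda+\chi,\mathcal{O}})^{\mathrm{ord}}$ identifies $\mathbb{T}^{S}(V_{1,r}, V_{\lambda+\chi,\mathcal{O}})^{\mathrm{ord}}$ with the image of $\mathbb{T}^{S}\otimes\mathcal{O}$ acting on $M_{\lambda}^{\bullet}\otimes^{L}_{\Lambda_{r}}\mathcal{O}^{(\chi)}$. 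Applying $-\otimes^{L}_{\Lambda_{r}}\mathcal{O}^{(\chi)}$ to the $\Lambda_{r}$-linear action of $\mathbb{T}_{\lambda}^{S}$ on $M_{\lambda}^{\bullet}$ kills $I^{(\chi)}_{r}$, since $I^{(\chi)}_{r}$ annihilates $\mathcal{O}^{(\chi)}$, so this yields a natural surjection
$$
   \pi \colon \mathbb{T}_{\lambda}^{S}/I^{(\chi)}_{r} \twoheadrightarrow \mathbb{T}^{S}(V_{1,r}, V_{\lambda+\chi,\mathcal{O}})^{\mathrm{ord}},
$$
where $\mathbb{T}_{\lambda}^{S}/I^{(\chi)}_{r}$ denotes $\mathbb{T}_{\lambda}^{S}\otimes_{\Lambda_{r}}\mathcal{O}^{(\chi)}$. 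Since $M_{\lambda}^{\bullet}$ is perfect, $\mathrm{End}_{\mathscr{D}(\Lambda_{0})}(M_{\lambda}^{\bullet})$ is module-finite over $\Lambda_{0}$, hence so is the $\Lambda_{0}$-subalgebra $\mathbb{T}_{\lambda}^{S}$; in particular all the rings here are Noetherian and $\mathbb{T}_{\lambda}^{S}/I^{(\chi)}_{r}$ is finite over $\mathcal{O}$.

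It then remains to show that every prime $\mathfrak{q}$ of $\mathbb{T}_{\lambda}^{S}$ with $I^{(\chi)}_{r}\mathbb{T}_{\lambda}^{S}\subseteq\mathfrak{q}$ contains $\ker(\pi)$; equivalently, that $\bigl(R\Gamma(V_{1,r}, V_{\lambda+\chi,\mathcal{O}})^{\mathrm{ord}}\bigr)_{\mathfrak{q}}\neq 0$ for each such $\mathfrak{q}$, because then the target of $\pi$ has nonzero localisation at $\mathfrak{q}$ and $\mathfrak{q}$ lies in the image on spectra. A self-map of a complex concentrated in degrees $[0,\nu]$ inducing zero on every cohomology group is nilpotent of order $\le\nu+1$ (filter by canonical truncations), so the kernel of $\mathrm{End}_{\mathscr{D}(\Lambda_{0})}(M_{\lambda}^{\bullet})\to\prod_{i}\mathrm{End}_{\Lambda_{0}}(H^{i}(M_{\lambda}^{\bullet}))$ is nilpotent; hence $\mathrm{Spec}\,\mathbb{T}_{\lambda}^{S}$ is unchanged on replacing $\mathbb{T}_{\lambda}^{S}$ by its image in $\prod_{i}\mathrm{End}(H^{i}(M_{\lambda}^{\bullet}))$, which acts faithfully on $\bigoplus_{i}H^{i}(M_{\lambda}^{\bullet})$, and so every such $\mathfrak{q}$ lies in the support of some $H^{i}(M_{\lambda}^{\bullet})$. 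Now fix $\mathfrak{q}$ and let $t_{1}$ be the largest degree with $H^{t_{1}}(M_{\lambda}^{\bullet})_{\mathfrak{q}}\neq 0$. The generators $([s^{(r)}_{i}]-\chi(s^{(r)}_{i}))_{i}$ of $I^{(\chi)}_{r}$ lie in the maximal ideal of $(\mathbb{T}_{\lambda}^{S})_{\mathfrak{q}}$, so Nakayama gives $H^{t_{1}}(M_{\lambda}^{\bullet})_{\mathfrak{q}}/I^{(\chi)}_{r}\neq 0$. On the other hand $\mathcal{O}^{(\chi)}=\Lambda_{r}/I^{(\chi)}_{r}$ is resolved by the Koszul complex on that regular sequence, so $M_{\lambda}^{\bullet}\otimes^{L}_{\Lambda_{r}}\mathcal{O}^{(\chi)}$ is the iterated mapping cone of those elements acting on the termwise $\Lambda_{r}$-flat complex $M_{\lambda}^{\bullet}$; each such cone preserves the top nonzero cohomological degree and replaces the top cohomology by its quotient by the relevant element. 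Localising at $\mathfrak{q}$ and iterating over the finitely many generators,
$$
   H^{t_{1}}\bigl((M_{\lambda}^{\bullet}\otimes^{L}_{\Lambda_{r}}\mathcal{O}^{(\chi)})_{\mathfrak{q}}\bigr) = H^{t_{1}}(M_{\lambda}^{\bullet})_{\mathfrak{q}}/I^{(\chi)}_{r} \neq 0,
$$
which gives the required non-vanishing and completes the argument.

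I expect the third paragraph to be the crux: one must control the derived base change $M_{\lambda}^{\bullet}\otimes^{L}_{\Lambda_{r}}\mathcal{O}^{(\chi)}$ finely enough to see that no Hecke eigensystem occurring in $M_{\lambda}^{\bullet}$ in weight $\chi$ is lost upon specialisation, even though the higher Tor terms do in general contribute extra classes, and extra nilpotents, on the right; this is the phenomenon behind the failure of $\pi$ to be an isomorphism and behind the correction to \cite{AshSteve} noted above. The Koszul/mapping-cone bookkeeping together with Nakayama's lemma is what makes it work, while the $\mathbb{T}^{S}$-equivariance of the control maps and the module-finiteness of $\mathbb{T}_{\lambda}^{S}$ over $\Lambda_{0}$ are routine but should be stated explicitly.
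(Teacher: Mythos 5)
Your proposal is correct, and it establishes exactly what the paper's proof establishes --- namely that the natural surjection $q\colon \mathbb{T}^{S}_{\lambda}\to\mathbb{T}^{S}(V_{1,r},V_{\lambda+\chi,\mathcal{O}})^{\mathrm{ord}}$ has $\mathrm{ker}(q)\subset\mathrm{Rad}(I^{(\chi)}_{r}\mathbb{T}^{S}_{\lambda})$ --- but by a genuinely different route. The paper follows \cite[Theorem 5.1]{ashpol}: it argues directly at the level of the complex, by induction on the length of the regular sequence $(x_{1},\ldots,x_{m})$ generating $I^{(\chi)}_{r}$, using that the terms of $\mathscr{C}^{\bullet}(V_{0,1},\mathbb{D}_{\mathrm{univ}}(\lambda))^{\mathrm{ord}}$ have no $x$-torsion to form $T'=x^{-1}T$ for $T\in\mathrm{ker}(q)$, and then using finite generation of the module $\mathscr{X}$ of ``bounded denominators'' over $\Lambda_{0}$ to find $N$ with $T^{N+1}\in x\mathbb{T}^{S}_{\lambda}$; the inductive step peels off one generator at a time through the intermediate quotient $M^{\bullet}_{\lambda}/(x_{1},\ldots,x_{m})$. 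You instead work at the level of cohomology: nilpotence of the kernel of $\mathrm{End}_{\mathscr{D}(\Lambda_{0})}(M^{\bullet}_{\lambda})\to\prod_{i}\mathrm{End}(H^{i}(M^{\bullet}_{\lambda}))$ (using the amplitude bound $[0,\nu]$), so every prime $\mathfrak{q}\supseteq I^{(\chi)}_{r}$ supports some $H^{i}(M^{\bullet}_{\lambda})$; then the top-degree/Nakayama argument, together with the fact that derived base change is plain base change in the top localised degree (the iterated-cone or hyper-Tor computation, valid since the terms are $\Lambda_{r}$-flat and the $x_{i}$ lie in the maximal ideal of $(\mathbb{T}^{S}_{\lambda})_{\mathfrak{q}}$), shows the specialised cohomology localised at $\mathfrak{q}$ is nonzero, hence $\mathfrak{q}$ survives in the small Hecke algebra. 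Both arguments need the Hecke-equivariance of the control quasi-isomorphisms and $\Lambda_{0}$-finiteness of $\mathbb{T}^{S}_{\lambda}$, which you rightly flag. What each buys: the paper's induction avoids any discussion of derived endomorphisms and works purely with torsion-freeness of the complex terms, while yours is the more conceptual ``support plus Nakayama in top degree'' argument standard in Hida-theoretic settings, avoids the induction entirely, and makes transparent why the higher Tor terms (the obstruction discussed in the remark correcting \cite{AshSteve}) can only add nilpotents rather than destroy eigensystems.
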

\begin{proof}
We note that since the elements of $\mathbb{T}^{S}_{\lambda}$ are $\Lambda_{0}$-linear there is a natural map 
$$
    q: \mathbb{T}^{S}_{\lambda} \to \mathbb{T}^{S}(V_{1,r}, V_{\lambda + \chi, \mathcal{O}})^{\mathrm{ord}}
$$
which lifts into a diagram
$$
\begin{tikzcd}
\mathbb{T}^{S} \arrow[d, ""] \arrow [dr, ""] \\
\mathbb{T}^{S}_{\lambda} \arrow[r, "q"] & \mathbb{T}^{S}(V_{1,r}, V_{\lambda + \chi, \mathcal{O}})^{\mathrm{ord}}.
\end{tikzcd}
$$
The two vertical maps are surjective and thus so is $q$.

To prove the theorem it suffices to show that $\mathrm{ker}(q) \subset \mathrm{Rad}(I_{r}^{(\chi)}\mathbb{T}^{S}_{\lambda})$. The ideal $I_{r}^{(\chi)} \subset \Lambda_{r}$ is generated by a regular sequence $(x_{1}, \ldots, x_{m})$.  We proceed by induction on $m$. Suppose $I^{(\chi)}_{r} = (x)$ and let $T \in \mathrm{ker}(q)$. Then $T(\mathscr{C}^{\bullet}(V_{0,1}, \mathbb{D}_{\mathrm{univ}}(\lambda))^{\mathrm{ord}}) \subset x\mathscr{C}^{\bullet}(V_{0,1}, \mathbb{D}_{\mathrm{univ}}(\lambda))^{\mathrm{ord}}$. We know that the modules $\mathscr{C}^{i}(V_{0,1}, \mathbb{D}_{\mathrm{univ}}(\lambda))^{\mathrm{ord}}$ have no $x$-torsion, so 
$$
    T' := x^{-1}T \in \mathrm{End}_{\Lambda_{0}}(M_{\lambda}^{\bullet})
$$
is well-defined. Define 
$$
    \mathscr{X} = \{\phi \in \mathrm{End}_{\Lambda_{0}}(M_{\lambda}^{\bullet}): \exists k \ \text{such that} \ x^{k}\phi \in \mathbb{T}^{S}_{\lambda}\}.
$$
Then $\mathscr{X}$ is a finitely generated $\Lambda_{0}$-module and $T' \in \mathscr{X}$. Since $\mathscr{X}$ is finitely generated there is an integer $N$ such that $x^{N}\mathscr{X} \subset \mathbb{T}^{S}_{\lambda}$. Thus
$$
    T^{N + 1} = x(x^{N}T^{N + 1}) \in \mathbb{T}^{S}_{\lambda},
$$
so $T \in \mathrm{Rad}(I_{r}^{(\chi)}\mathbb{T}^{S}_{\lambda})$. 

Now let $m \geq 1$ and suppose the induction hypothesis holds for regular sequences of length $m$. Suppose $I_{r}^{(\chi)} = (x_{1}, \ldots, x_{m + 1})$ then consider 
$$
    \psi: \mathbb{T}^{S}_{\lambda} \xrightarrow{\varphi} \mathbb{T}^{S}(M^{\bullet}_{\lambda}/(x_{1}, \ldots, x_{m})) \to \mathbb{T}^{S}(V_{1,r}, V_{\lambda + \chi, \mathcal{O}})^{\mathrm{ord}}.
$$
We note that the proof of the $m = 1$ case holds perfectly well for the second map in the above sequence and so if $T \in \mathrm{ker}(\psi)$ then $\varphi(T) \in \mathrm{Rad}(x_{m + 1}\mathbb{T}^{S}(M^{\bullet}_{\lambda}/(x_{1}, \ldots, x_{m})))$ so that there is $N$ such that $\varphi(T^{N})$ is in this radical and thus there is an element $z \in \mathbb{T}^{S}_{\lambda}$ such that $\varphi(T^{N}) = x_{m + 1}\varphi(z)$. By the induction hypothesis
$$
    T^{N} - \alpha z \in \mathrm{Rad}((x_{1}, \ldots, x_{m})\mathbb{T}^{S}_{\lambda})
$$
so there is $M$ such that 
$$
     (T^{N} - \alpha z)^{M} \in (x_{1}, \ldots, x_{m})\mathbb{T}^{S}_{\lambda}
$$
whence it is clear that 
$$
    T^{NM} \in I^{(\chi)}_{r}\mathbb{T}^{S}_{\lambda}. 
$$
\end{proof}

\begin{corollary} \label{cor:5}
Let $\wp 
\in \mathrm{Spec} \ \mathbb{T}^{S}_{\lambda}/I^{(\chi)}_{r}$, then 
$$
    (M_{\lambda}^{\bullet})_{\wp} \otimes^{L}_{\Lambda_{r}} \mathcal{O}^{(\chi)} \sim R\Gamma(V_{1,r}, V_{\lambda + \chi, \mathcal{O}})_{\wp}^{\mathrm{ord}}
$$
\end{corollary}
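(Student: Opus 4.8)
The plan is to deduce the statement by localising the adèlic derived control theorem at $\wp$ and commuting the localisation past all the operations involved. Throughout, $M_\lambda^\bullet$ denotes the adèlic complex $R\Gamma(V_{0,1}, \mathbb{D}_{\mathrm{univ}}(\lambda))^{\mathrm{ord}}$, which is a finite direct sum of copies of $\mathscr{C}^\bullet(\Gamma_{0,1}, \mathbb{D}_{\mathrm{univ}}(\lambda))^{\mathrm{ord}}$ and hence perfect over $\Lambda_0$ by the results of Section \ref{sec:3}, and which carries the $\mathbb{T}^S_\lambda$-action. Since the representatives $t_i$ may be chosen trivial at $p$, the arguments of Section \ref{sec:3} apply summand-by-summand and upgrade Theorem \ref{thm:5} to a quasi-isomorphism $M_\lambda^\bullet \otimes^L_{\Lambda_r}\mathcal{O}^{(\chi)} \sim R\Gamma(V_{1,r}, V_{\lambda+\chi,\mathcal{O}})^{\mathrm{ord}}$. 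The first step is to make this $\mathbb{T}^S$-equivariant: every map occurring in the chains of Lemma \ref{lem:4}, Proposition \ref{prop:6} and Lemma \ref{lem:5} is assembled from restriction, corestriction, $[\alpha]$-maps and Shapiro isomorphisms involving only the level at $p$ and the weight, whereas a prime-to-$p$ operator $[U\alpha U]\in\mathbb{T}^S$ is given by the double-coset formula \eqref{eq:13} applied at places $v\nmid p$, where the level subgroup is unchanged throughout the tower; the two families of operators act on disjoint `coordinates' and commute. Since the $\Lambda_r$-action likewise commutes with $\mathbb{T}^S$, the complex $M_\lambda^\bullet \otimes^L_{\Lambda_r}\mathcal{O}^{(\chi)}$ is naturally an object of $\mathscr{D}(\mathbb{T}^S_\lambda)$ and the quasi-isomorphism above is $\mathbb{T}^S_\lambda$-linear, where $\mathbb{T}^S_\lambda$ acts on the target through the surjection $q$ of the preceding theorem.

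Next I would localise both sides at $\wp$, regarded as a prime of $\mathbb{T}^S_\lambda$ containing the extension of $I^{(\chi)}_r$ along $\Lambda_r\to\mathbb{T}^S_\lambda$. Localisation at $\wp$ is exact, hence preserves quasi-isomorphisms, and the equivariant statement gives
$$
(M_\lambda^\bullet \otimes^L_{\Lambda_r}\mathcal{O}^{(\chi)})_\wp \sim R\Gamma(V_{1,r}, V_{\lambda+\chi,\mathcal{O}})^{\mathrm{ord}}_\wp .
$$
For the right-hand side, $q$ is surjective with $\ker q\subset\mathrm{Rad}(I^{(\chi)}_r\mathbb{T}^S_\lambda)\subset\wp$, so restriction of scalars along $q$ identifies this localisation with the localisation of $R\Gamma(V_{1,r},V_{\lambda+\chi,\mathcal{O}})^{\mathrm{ord}}$ at the prime of $\mathbb{T}^S(V_{1,r},V_{\lambda+\chi,\mathcal{O}})^{\mathrm{ord}}$ corresponding to $\wp$ under the bijection of the preceding theorem; this is precisely the right-hand side of the corollary.

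For the left-hand side I would check that localisation commutes with $-\otimes^L_{\Lambda_r}\mathcal{O}^{(\chi)}$. Resolve $\mathcal{O}^{(\chi)}$ by the finite free Koszul complex $K_\bullet$ on the regular sequence $([s^{(r)}_i]-\chi(s^{(r)}_i))_i$ generating $I^{(\chi)}_r$; then $M_\lambda^\bullet \otimes^L_{\Lambda_r}\mathcal{O}^{(\chi)}$ is computed by $\mathrm{Tot}(M_\lambda^\bullet\otimes_{\Lambda_r}K_\bullet)$, a complex of $\mathbb{T}^S_\lambda$-modules on which $\mathbb{T}^S_\lambda$ acts through the $M_\lambda^\bullet$ factor. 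As $(\mathbb{T}^S_\lambda)_\wp$ is $\mathbb{T}^S_\lambda$-flat and each term of $K_\bullet$ is finite free over $\Lambda_r$, tensoring termwise with $(\mathbb{T}^S_\lambda)_\wp$ over $\mathbb{T}^S_\lambda$ produces $\mathrm{Tot}((M_\lambda^\bullet)_\wp\otimes_{\Lambda_r}K_\bullet)$, that is $(M_\lambda^\bullet \otimes^L_{\Lambda_r}\mathcal{O}^{(\chi)})_\wp \sim (M_\lambda^\bullet)_\wp\otimes^L_{\Lambda_r}\mathcal{O}^{(\chi)}$. Combining the three displays yields the corollary.

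The main obstacle is the first step: spelling out carefully that the entire chain of quasi-isomorphisms underlying the derived control theorem is $\mathbb{T}^S$-equivariant and that it intertwines the two $\mathbb{T}^S_\lambda$-module structures through $q$. Everything after that is formal bookkeeping with exact functors and a Koszul resolution. Because the Hecke-equivariance is essentially built into the functorial $(\phi,f)$-formalism of Section \ref{sec:1b} — the prime-to-$p$ operators and the $p$-power-level and weight operations never interact — I expect this to go through routinely, but it is the point that genuinely needs to be recorded.
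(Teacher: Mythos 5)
Your proposal is correct and takes essentially the same route as the paper, whose entire proof is the observation that the preceding theorem dictates how primes move between $\mathbb{T}^{S}_{\lambda}/I^{(\chi)}_{r}$ and $\mathbb{T}^{S}(V_{1,r},V_{\lambda+\chi,\mathcal{O}})^{\mathrm{ord}}$ and that localisation is exact. You have simply made explicit the points the paper leaves implicit, namely the $\mathbb{T}^{S}$-equivariance of the control quasi-isomorphism and the commutation of localisation at $\wp$ with $-\otimes^{L}_{\Lambda_{r}}\mathcal{O}^{(\chi)}$ via a Koszul resolution.
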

\begin{proof}
The control theorem tells us precisely how prime ideals move between the big and small Hecke algebras and localisation is exact. 
\end{proof}
Let $V = K^{p} \times G(\mathbb{Z}_{p})$. 
\begin{theorem}
Suppose there is a maximal ideal $\mathfrak{m} \in \mathrm{Spec}\mathbb{T}^{S}$ such that 
$$
    H^{\bullet}_{?}(V_{0,1}, V_{\lambda, \mathcal{O}})_{\mathfrak{m}} =   H^{d}_{?}(V_{0,1}, V_{\lambda, \mathcal{O}})_{\mathfrak{m}}
$$
for $? \in \{\emptyset, c\}$
and $H^{d}_{?}(V_{0,1}, V_{\lambda, \mathcal{O}})_{\mathfrak{m}}$ is free as an $\mathcal{O}$-module. Let $\Delta^{p}$ be the prime-to-$p$ part of $S_{G}(\mathbb{Z}_{p})$ Then 
$$
    H^{\bullet}_{?}(V_{1,\infty}, V_{\lambda, \mathcal{O}})_{\mathfrak{m}}^{\mathrm{ord}, \Delta^{p}} =   H^{d}_{?}(V_{1,\infty}, V_{\lambda, \mathcal{O}})_{\mathfrak{m}}^{\mathrm{ord}, \Delta^{p}},
$$
where $(\cdot)^{\Delta^{p}}$ refers to $\Delta^{p}$-invariants. Furthermore, $H^{d}_{?}(V_{1,\infty}, V_{\lambda, \mathcal{O}})_{\mathfrak{m}}^{\mathrm{ord}, \Delta^{p}}$ is a free $\Lambda_{1}$-module.
\end{theorem}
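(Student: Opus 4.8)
The plan is to deduce the statement from the derived control theorem by localizing at $\mathfrak m$, passing to $\Delta^{p}$-invariants (which replaces the semilocal ring $\Lambda_{0}$ by the \emph{regular local} ring $\Lambda_{1}$), and then running a downward Koszul induction on a perfect complex over $\Lambda_{1}$.

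First I would set $\chi=0$, so that $\lambda$ is dominant for $G$ and $V_{\lambda,\mathcal{O}}=V_{\lambda+\chi,\mathcal{O}}$. The algebra $\mathbb{T}^{S}$ acts on $M_{\lambda}^{\bullet}=\mathscr{C}^{\bullet}(V_{0,1},\mathbb{D}_{\mathrm{univ}}(\lambda))^{\mathrm{ord}}$ through the finite $\Lambda_{0}$-algebra $\mathbb{T}^{S}_{\lambda}$ by honest chain endomorphisms; localizing at $\mathfrak m$ is exact and commutes with the $\Lambda_{0}$-action and with the idempotents $e$ (ordinary) and $e_{\mathbbm 1}=|\Delta^{p}|^{-1}\sum_{\delta\in\Delta^{p}}[\delta]$ (the latter lies in $\Lambda_{0}$ since $p\nmid|\Delta^{p}|$, is central, and satisfies $e_{\mathbbm 1}\Lambda_{0}=\Lambda_{1}$, with $\Delta^{p}$-invariants $=e_{\mathbbm 1}(\cdot)$). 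Since $\Lambda_{0}$ is a finite product of complete regular local rings $\widetilde{\mathcal{O}}_{j}[[S_{1}(\mathbb Z_{p})]]$ and $\mathbb{T}^{S}_{\lambda}$ is finite over it, $(\cdot)_{\mathfrak m}$ cuts out a direct summand of each term of $M_{\lambda}^{\bullet}$, so $N^{\bullet}:=e_{\mathbbm 1}(M_{\lambda}^{\bullet})_{\mathfrak m}$ is a perfect complex of $\Lambda_{1}$-modules in degrees $[0,\nu]$. Recall $\Lambda_{1}=\mathcal{O}[[S_{1}(\mathbb Z_{p})]]$ is regular local with maximal ideal $(p,t_{1},\dots,t_{n})$, where $t_{i}=[s_{i}]-1$ for a $\mathbb Z_{p}$-basis of $S_{1}(\mathbb Z_{p})$; the augmentation ideal $J:=(t_{1},\dots,t_{n})$ is generated by part of a regular system of parameters and $\Lambda_{1}/J=\mathcal{O}$. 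Under these identifications $e_{\mathbbm 1}\mathcal{O}^{(\mathbbm 1)}=\Lambda_{1}/J$, so the $r=0$ control isomorphism gives $N^{\bullet}\otimes^{L}_{\Lambda_{1}}\mathcal{O}\sim e\,e_{\mathbbm 1}\,R\Gamma(V_{0,1},V_{\lambda,\mathcal{O}})_{\mathfrak m}$, a direct summand of $R\Gamma(V_{0,1},V_{\lambda,\mathcal{O}})_{\mathfrak m}$; by hypothesis its cohomology is therefore concentrated in degree $d$ and finite free over $\mathcal{O}$ there.

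The heart of the matter is then the following commutative algebra statement, which I would isolate as a lemma: if $P^{\bullet}$ is a perfect complex over a regular local ring $R$, $(x_{1},\dots,x_{n})$ is part of a regular system of parameters, $J_{i}=(x_{1},\dots,x_{i})$, and $P^{\bullet}\otimes^{L}_{R}R/J_{n}$ has cohomology concentrated in a single degree $d$ and finite free over $R/J_{n}$ there, then $P^{\bullet}\otimes^{L}_{R}R/J_{i}$ has cohomology concentrated in degree $d$ and finite free over $R/J_{i}$ for every $i$; taking $i=0$ gives that $P^{\bullet}$ has cohomology concentrated in degree $d$ with $H^{d}(P^{\bullet})$ free. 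One proves this by downward induction on $i$: with $C_{i}=P^{\bullet}\otimes^{L}_{R}R/J_{i}$ there is a triangle $C_{i-1}\xrightarrow{x_{i}}C_{i-1}\to C_{i}\to$ (as $x_{i}$ is a nonzerodivisor on $R/J_{i-1}$), and from its long exact cohomology sequence $H^{k}(C_{i})=0$ for $k\neq d$ forces $x_{i}$ to act surjectively on the finitely generated $R/J_{i-1}$-module $H^{k}(C_{i-1})$, which therefore vanishes by Nakayama; in degree $d$ one is left with $0\to H^{d}(C_{i-1})\xrightarrow{x_{i}}H^{d}(C_{i-1})\to H^{d}(C_{i})\to 0$, so the local criterion for flatness in the form \cite[Theorem 6.8]{eisenbud} shows $H^{d}(C_{i-1})$ is flat, hence free, over $R/J_{i-1}$. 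Applying this with $R=\Lambda_{1}$, $x_{i}=t_{i}$, $P^{\bullet}=N^{\bullet}$ yields $H^{i}(N^{\bullet})=0$ for $i\neq d$ and $H^{d}(N^{\bullet})$ a finite free $\Lambda_{1}$-module.

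Finally I would translate back to the inverse-limit cohomology in the statement. Since $N^{\bullet}\sim H^{d}(N^{\bullet})[-d]$ with $H^{d}(N^{\bullet})$ free, derived and ordinary tensor products agree, so for each $r\ge 1$ the control theorem, localized at $\mathfrak m$ and cut by $e_{\mathbbm 1}$, gives $R\Gamma(V_{1,r},V_{\lambda,\mathcal{O}})^{\mathrm{ord},\Delta^{p}}_{\mathfrak m}\sim\bigl(H^{d}(N^{\bullet})/I^{(\mathbbm 1)}_{r}\bigr)[-d]$; in particular $H^{i}_{?}(V_{1,r},V_{\lambda,\mathcal{O}})^{\mathrm{ord},\Delta^{p}}_{\mathfrak m}=0$ for $i\neq d$, so the inverse system in $r$ is Mittag–Leffler, $\varprojlim^{1}$ vanishes, and (using that the finitely generated $\Lambda_{1}$-module $H^{d}(N^{\bullet})$ is complete for the topology defined by the $I^{(\mathbbm 1)}_{r}$) passing to the limit gives $H^{\bullet}_{?}(V_{1,\infty},V_{\lambda,\mathcal{O}})^{\mathrm{ord},\Delta^{p}}_{\mathfrak m}=H^{d}_{?}(V_{1,\infty},V_{\lambda,\mathcal{O}})^{\mathrm{ord},\Delta^{p}}_{\mathfrak m}=H^{d}(N^{\bullet})$, free over $\Lambda_{1}$. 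The case $?=c$ is identical, with $M_{\lambda}^{\bullet}$ replaced by $\mathscr{C}^{\bullet}_{c}(V_{0,1},\mathbb{D}_{\mathrm{univ}}(\lambda))^{\mathrm{ord}}$ and the compactly supported derived control theorem in place of Theorem~\ref{thm:5}. I expect the main obstacle to be purely organizational: checking that localization at the Hecke ideal $\mathfrak m$ (not an ideal of $\Lambda_{0}$) preserves perfectness, that $\Delta^{p}$-invariance is exactly the idempotent $e_{\mathbbm 1}$ carrying $\Lambda_{0}$ to $\Lambda_{1}$ and $\mathcal{O}^{(\mathbbm 1)}$ to $\Lambda_{1}/J$, and the bookkeeping in the final passage to the limit; the homological core is routine.
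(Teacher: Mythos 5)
Your proposal is correct, and its homological engine --- downward induction from the top degree using Nakayama and the local criterion for flatness \cite[Theorem 6.8]{eisenbud}, fed by the derived control theorem --- is the same as the paper's, but the packaging genuinely differs. The paper treats the $\Delta^{p}$-invariants by applying $(\cdot)^{\Delta^{p}}$ to a hyper-Tor spectral sequence and needs Lemma \ref{lem:26} to commute invariants past Tor; you instead cut with the idempotent $|\Delta^{p}|^{-1}\sum_{\delta}[\delta]$ at the level of the perfect complex, replacing $\Lambda_{0}$ by $\Lambda_{1}$ once and for all and making that lemma unnecessary. The paper also reduces all the way to the residue field: it first uses the $\mathcal{O}$-freeness hypothesis to pass to mod-$p$ coefficients $V_{\lambda,1}$ and then runs its induction inside the spectral sequence $\mathrm{Tor}^{\Lambda_{1}}_{-i}(H^{j}_{?}(V_{1,\infty},V_{\lambda,\mathcal{O}})^{\mathrm{ord},\Delta^{p}},\Lambda_{1}/\mathfrak{m}_{1})\Rightarrow H^{i+j}_{?}(V_{0,1},V_{\lambda,1})^{\mathrm{ord}}$, whereas you never reduce mod $p$: you use the $\mathcal{O}$-freeness as the base of a Koszul-style induction over the augmentation variables only, one triangle $C_{i-1}\xrightarrow{t_{i}}C_{i-1}\to C_{i}$ at a time, and your isolated commutative-algebra lemma (a perfect complex over a regular local ring whose derived reduction modulo part of a regular system of parameters has cohomology in a single degree, free there) is a clean abstraction of what the spectral-sequence argument accomplishes. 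Both arguments rely, implicitly in the paper and explicitly in your closing Mittag--Leffler/completeness remarks, on identifying $R\Gamma_{?}(V_{1,\infty},V_{\lambda,\mathcal{O}})^{\mathrm{ord}}$ with the adèlic version of $M_{\lambda}^{\bullet}$. The one point to tighten is your claim that localisation at $\mathfrak{m}$ ``cuts out a direct summand of each term'' of the complex: the Hecke action is only canonically defined up to homotopy, so you should either invoke idempotent-completeness of the derived category of perfect complexes or localise cohomology and the spectral sequence (as the paper implicitly does); this is presentational, not a gap.
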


\begin{proof}
 We note that as $\Delta^{p}$ has prime-to-$p$ order, taking $\Delta^{p}$ invariants is an exact functor on $\mathcal{O}$-modules. Let $\mathfrak{m}_{r}$ be the maximal ideal of $\Lambda_{r}$ generated by $p$ and $I_{r}^{(\mathbbm{1})}$. 
 An easy corollary of derived control is the following `mod $p$' variant:
$$
    M_{\lambda,?}^{\bullet} \otimes^{\mathbb{L}}_{\Lambda_{r}} \Lambda_{r}/\mathfrak{m}^{}_{r} \sim R\Gamma_{?}(V_{1,r}, V_{\lambda, 1})^{\mathrm{ord}}.
$$
 We note that since $H_{?}^{i}(V_{0,1}, V_{\lambda, \mathcal{O}})_{\mathfrak{m}}^{\mathrm{ord}}$ is a free $\mathcal{O}$-module for all $i$, the short exact sequence
$$
    0 \to V_{\lambda,\mathcal{O}} \xrightarrow{p}  V_{\lambda,\mathcal{O}} \to  V_{\lambda,1} \to 0
$$
gives us that $H_{?}^{i}(V_{0,1}, V_{\lambda, \mathcal{O}})_{\mathfrak{m}}^{\mathrm{ord}}/p = H_{?}^{i}(V_{0,1}, V_{\lambda, 1})_{\mathfrak{m}}^{\mathrm{ord}}$ and so 
$$
     H_{?}^{i}(V_{0,1}, V_{\lambda, 1})_{\mathfrak{m}}^{\mathrm{ord}} = 0
$$
for all $i$.
We consider the spectral sequence 
 $$
  E_{2}^{i,j}: \mathrm{Tor}_{-i}^{\Lambda_{1}}(H^{j}_{?}(V_{1,\infty}, V_{\lambda, \mathcal{O}})^{\mathrm{ord}}, \Lambda_{1}/\mathfrak{m}_{1}) \implies H_{?}^{i + j}(U_{1,1}, V_{\lambda, 1})^{\mathrm{ord}},
 $$
 which satisfies $E^{i,j}_{2} = 0$ for $j > \nu$. We can apply the exact functor $(\cdot)^{\Delta^{p}}$ to the spectral sequence to get 
 $$
      (E_{2}^{i,j})^{\Delta^{p}}: \mathrm{Tor}_{-i}^{\Lambda}(H^{j}_{?}(V_{1,\infty}, V_{\lambda, \mathcal{O}})^{\mathrm{ord}}, \Lambda/\mathfrak{m}_{1})^{\Delta^{p}}\implies H_{?}^{i + j}(U_{1,1}, V_{\lambda, 1})^{\mathrm{ord}, \Delta^{p}} =  H_{?}^{i + j}(V_{0,1}, V_{\lambda, 1})^{\mathrm{ord}}.
 $$
 Moreover, we can show that 
$$
 \mathrm{Tor}_{-i}^{\Lambda_{1}}(H^{j}_{?}(V_{1,\infty}, V_{\lambda, \mathcal{O}})^{\mathrm{ord}}, \Lambda_{1}/\mathfrak{m}_{1})^{\Delta^{p}} = \mathrm{Tor}_{-i}^{\Lambda_{1}}(H^{j}_{?}(V_{1,\infty}, V_{\lambda, \mathcal{O}})^{\mathrm{ord}, \Delta^{p}}, \Lambda_{1}/\mathfrak{m}_{1}),
$$
(see Lemma \ref{lem:26}) so the spectral sequence becomes
 $$
      (E_{2}^{i,j})^{\Delta^{p}}: \mathrm{Tor}_{-i}^{\Lambda_{1}}(H^{j}_{?}(V_{1,\infty}, V_{\lambda, \mathcal{O}})^{\mathrm{ord}, \Delta^{p}}, \Lambda_{1}/\mathfrak{m}_{1})\implies H_{?}^{i + j}(V_{0,1}, V_{\lambda, 1})^{\mathrm{ord}}.
 $$
 We can then read off that 
$$
     H_{?}^{\nu}(V_{1,\infty}, V_{\lambda, \mathcal{O}})_{\mathfrak{m}}^{\mathrm{ord}, \Delta^{p}}/\mathfrak{m}_{1} \cong H_{?}^{\nu}(V_{0,1}, V_{\lambda, 1})_{\mathfrak{m}}^{\mathrm{ord}} = 0
$$  
and thus Nakayama allows us to conclude that
$$
    H_{?}^{\nu}(V_{1,\infty}, V_{\lambda, \mathcal{O}})_{\mathfrak{m}}^{\mathrm{ord}, \Delta^{p}} = 0.
$$
Now $(E_{2}^{i,j})^{\Delta^{p}} = 0$ for $j > \nu - 1$ and so we perform this step inductively to get $H_{?}^{j}(V_{1,\infty}, V_{\lambda, \mathcal{O}})_{\mathfrak{m}}^{\mathrm{ord},\Delta^{p}} = 0$ for $j > d$, so that $(E^{i,j}_{2})^{\Delta^{p}} = 0$ for $j > d$. We can now read off that there is a surjection
$$
   H_{?}^{d - 1}(V_{0, 1}, V_{\lambda, 1})_{\mathfrak{m}}^{\mathrm{ord},\Delta^{p}} =  H_{?}^{d - 1}(V_{0,1}, V_{\lambda, 1})_{\mathfrak{m}}^{\mathrm{ord}} \twoheadrightarrow \mathrm{Tor}_{1}^{\Lambda}(
     H_{?}^{d}(V_{1,\infty}, V_{\lambda, \mathcal{O}})_{\mathfrak{m}}^{\mathrm{ord}, \Delta^{p}}, \Lambda/\mathfrak{m}_{1}),
$$
and since $ H_{?}^{d - 1}(V_{0, 1}, V_{\lambda, 1})_{\mathfrak{m}}^{\mathrm{ord}} = 0$ the Tor group vanishes and so
by the local criterion for flatness $ H_{?}^{d}(V_{1,\infty}, V_{\lambda, \mathcal{O}})_{\mathfrak{m}}^{\mathrm{ord}, \Delta^{p}}$ is a flat $\Lambda$-module. Since 
$$
H_{?}^{d}(V_{1,\infty}, V_{\lambda, 1})_{\mathfrak{m}}^{\mathrm{ord}, \Delta^{p}}/\mathfrak{m}_{1} \cong H_{?}^{d}(V_{0, 1}, V_{\lambda, 1})_{\mathfrak{m}}^{\mathrm{ord}}
$$ 
Nakayama gives us that $H_{?}^{d}(V_{1,\infty}, V_{\lambda, 1})_{\mathfrak{m}}^{\mathrm{ord}, \Delta^{p}}$ is finitely generated and thus it is free. We're left to showing 
$$
    H_{?}^{j}(V_{1,\infty}, V_{\lambda, 1})_{\mathfrak{m}}^{\mathrm{ord}, \Delta^{p}} = 0
$$
for $j < d$, but since $H_{?}^{d}(V_{1,\infty}, V_{\lambda, 1})_{\mathfrak{m}}^{\mathrm{ord}, \Delta^{p}}$ is free we have $(E^{i,d}_{2})^{\Delta^{p}} = 0$ for $i < 0$ so we can proceed much as we did for $j > d$.
\end{proof}

\begin{remark}
Examples of our assumption holding include the work of Mokrane-Tilouine \cite{mokrane}, where they prove that the assumptions hold for $G = \mathrm{GSp}_{2g}$ and suitably nice maximal ideals of $\mathbb{T}^{S}$. Such results will generally be proved at prime-to-$p$ level and can be deduced at level $V_{0,1}$ via Theorem \ref{thm:7}. 
\end{remark}

\begin{lemma} \label{lem:26}
Let $M$ be a $\Lambda_{0}$-module, then 
$$
    (M \otimes_{\Lambda} \Lambda/\mathfrak{m}_{1})^{\Delta^{p}} = M^{\Delta^{p}} \otimes_{\Lambda} \Lambda/\mathfrak{m}_{1}.
$$
\end{lemma}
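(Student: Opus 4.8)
The plan is to realise the functor of $\Delta^p$-invariants as multiplication by an idempotent of the base ring $\Lambda_0$, and then to observe that this idempotent commutes with $\Lambda_1$, so that the asserted identity becomes purely formal. First I would note that $\Delta^p$ is finite of order prime to $p$, so $|\Delta^p|^{-1}\in\mathcal{O}$ and the element
$$
e := |\Delta^p|^{-1}\sum_{g\in\Delta^p}[g] \in \Lambda_0
$$
is a well-defined idempotent; it is central since $\Lambda_0$ is commutative. For any $\Lambda_0$-module $N$ one then has $N^{\Delta^p}=eN$ and $N=eN\oplus(1-e)N$ as $\Lambda_0$-modules. I would also record the splitting $S_G(\mathbb{Z}_p)=\Delta^p\times S_1(\mathbb{Z}_p)$ and the induced identification $\Lambda_0=\mathcal{O}[\Delta^p]\otimes_{\mathcal{O}}\Lambda_1$, which exhibits $e$ as an element of $\Lambda_0$ lying in the centraliser of $\Lambda_1$; in particular $eM$ is a direct summand of $M$ as a $\Lambda_1$-module.

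With this in place I would carry out the following formal computation. Since $\Delta^p$ and $\Lambda_1$ both sit inside the commutative ring $\Lambda_0$, their actions on $M$ commute, so $\Delta^p$ acts $\Lambda_1$-linearly on $M$ and hence on $M\otimes_{\Lambda_1}\Lambda_1/\mathfrak{m}_1$ (via the first factor, the factor $\Lambda_1/\mathfrak{m}_1$ carrying the trivial $\Delta^p$-action); taking invariants of the latter module is again multiplication by $e$. Moreover, as $e$ is $\Lambda_1$-linear and $eM$ is a $\Lambda_1$-direct summand of $M$, the natural map $eM\otimes_{\Lambda_1}\Lambda_1/\mathfrak{m}_1\to M\otimes_{\Lambda_1}\Lambda_1/\mathfrak{m}_1$ is injective with image $e\,(M\otimes_{\Lambda_1}\Lambda_1/\mathfrak{m}_1)$. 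Combining these two facts,
$$
(M\otimes_{\Lambda_1}\Lambda_1/\mathfrak{m}_1)^{\Delta^p} = e\,(M\otimes_{\Lambda_1}\Lambda_1/\mathfrak{m}_1) = eM\otimes_{\Lambda_1}\Lambda_1/\mathfrak{m}_1 = M^{\Delta^p}\otimes_{\Lambda_1}\Lambda_1/\mathfrak{m}_1,
$$
which is the assertion of the lemma (with $\Lambda=\Lambda_1$).

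I do not expect any genuine obstacle here: the content is just that the exact functor of $\Delta^p$-invariants is given by an idempotent of $\Lambda_0$ commuting with $\Lambda_1$, hence commutes with $-\otimes_{\Lambda_1}\Lambda_1/\mathfrak{m}_1$. The only points needing a sentence of justification are that $|\Delta^p|$ is prime to $p$ (so that $e$ exists) and that the $\Delta^p$- and $\Lambda_1$-actions on $M$ commute, both immediate from the structure of $\Lambda_0=\mathcal{O}[\Delta^p]\otimes_{\mathcal{O}}\Lambda_1$. Accordingly I would keep the write-up to essentially the display above together with the remark on idempotents.
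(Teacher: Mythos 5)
Your proof is correct, and it takes a somewhat different route from the paper's. The paper decomposes $M$ into $\psi$-isotypic summands $M^{(\psi)}$ for characters $\psi$ of $\Delta^{p}$, notes that the tensor product decomposes accordingly, and reduces the lemma to the arithmetic fact that a character of the prime-to-$p$ group $\Delta^{p}$ which is trivial mod $p$ is trivial (prime-to-$p$ roots of unity inject into the residue field); that is what kills the contribution of the summands with $\psi \neq \mathbbm{1}$ to the invariants after reduction. You instead use only the single averaging idempotent $e = |\Delta^{p}|^{-1}\sum_{g}[g] \in \Lambda_{0}$: invariants are $e(\cdot)$ because $|\Delta^{p}|$ is invertible in $\mathcal{O}$, and since $e$ is $\Lambda_{1}$-linear with $M = eM \oplus (1-e)M$, applying $e$ commutes with $-\otimes_{\Lambda_{1}}\Lambda_{1}/\mathfrak{m}_{1}$. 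Both arguments ultimately rest on $p \nmid |\Delta^{p}|$, but yours is purely formal beyond that: it avoids the full isotypic decomposition (and the implicit requirement that all characters of $\Delta^{p}$ be $\mathcal{O}$-valued), and it never uses that the residue characteristic of $\mathfrak{m}_{1}$ is $p$, so it in fact proves the stronger statement that $(\cdot)^{\Delta^{p}}$ commutes with $-\otimes_{\Lambda_{1}}N$ for an arbitrary $\Lambda_{1}$-module $N$ with trivial $\Delta^{p}$-action, whereas the paper's character-theoretic step is specific to reduction mod $p$. One small remark: the splitting $S_{G}(\mathbb{Z}_{p}) = \Delta^{p}\times S_{1}(\mathbb{Z}_{p})$ and the identification $\Lambda_{0} = \mathcal{O}[\Delta^{p}]\otimes_{\mathcal{O}}\Lambda_{1}$ that you record are not actually needed (and the former would require the component group of $S_{G}(\mathbb{Z}_{p})$ to have no $p$-part); all your argument uses is that $e$ lies in the commutative ring $\Lambda_{0}$ and hence commutes with $\Lambda_{1}$.
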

\begin{proof}
We can decompose $M$ into a direct sum of $\psi$-eigenspaces $M^{(\psi)}$ where $\psi$ runs over characters $\psi: \Delta^{p} \to \mathcal{O}^{\times}$. Thus
$$
    M \otimes_{\Lambda} \Lambda/\mathfrak{m}_{1} = \oplus_{\psi}(M^{(\psi)} \otimes_{\Lambda} \Lambda/\mathfrak{m}_{1}).
$$
Since $\Delta^{p}$ acts on the tensor product through $M$ and preserves the eigenspaces by definition, we need only to show that $$
\psi(\Delta^{p}) \equiv 1 \ \mathrm{mod} \ p \implies \psi \equiv \mathbbm{1}
$$
but $\Delta^{p}$ is finite so $\psi$ takes values in $(\mathcal{O}/p)^{\times} \hookrightarrow \mathcal{O}^{\times}$, so the implication holds.
\end{proof}
\typeout{}

\providecommand{\bysame}{\leavevmode\hbox to3em{\hrulefill}\thinspace}
\providecommand{\MR}{\relax\ifhmode\unskip\space\fi MR }
\providecommand{\MRhref}[2]{%
  \href{http://www.ams.org/mathscinet-getitem?mr=#1}{#2}
}
\providecommand{\href}[2]{#2}

\bibliographystyle{amsalpha}

\begin{thebibliography}{KLZ15}

\bibitem[APS08]{ashpol}
Avner Ash, David Pollack, and Glenn Stevens, \emph{Rigidity of p-adic
  cohomology classes of congruence subgroups of $\mathrm{GL}(n, \mathbb{Z})$},
  Proceedings of the London Mathematical Society \textbf{96} (2008), no.~2,
  367--388.

\bibitem[AS97]{AshSteve}
Avner Ash and Glenn Stevens, \emph{$p$-adic deformations of cohomology classes
  of subgroups of $\mathrm{GL} (n,\mathbb{Z}$)}, Collectanea Mathematica
  (1997), 1--30.

\bibitem[Eis13]{eisenbud}
David Eisenbud, \emph{Commutative algebra: with a view toward algebraic
  geometry}, vol. 150, Springer Science \& Business Media, 2013.

\bibitem[Hid95]{hidacontrol}
Haruzo Hida, \emph{Control theorems of $ p $-nearly ordinary cohomology groups
  for $\mathrm{SL}(n)$}, Bulletin de la Soci{\'e}t{\'e} Math{\'e}matique de
  France \textbf{123} (1995), no.~3, 425--475.

\bibitem[KLZ15]{kings2015rankin}
Guido Kings, David Loeffler, and Sarah~Livia Zerbes, \emph{Rankin--eisenstein
  classes and explicit reciprocity laws}, Cambridge J Math \textbf{5} (2015),
  no.~1, 1--122.

\bibitem[LSZ17]{LZGsp}
David Loeffler, Chris Skinner, and Sarah~Livia Zerbes, \emph{Euler systems for
  $\mathrm{GSp}_{4}$}, arXiv preprint arXiv:1706.00201 (2017).

\bibitem[MT02]{mokrane}
Abdellah Mokrane and Jacques Tilouine, \emph{Cohomology of siegel varieties
  with $p$-adic integral coefficients and applications}, Ast\'erisque
  \textbf{280} (2002), 1--95.

\bibitem[Pil20]{pilloni}
Vincent Pilloni, \emph{Higher coherent cohomology and $ p $-adic modular forms
  of singular weights}, Duke Mathematical Journal \textbf{169} (2020), no.~9,
  1647--1807.

\bibitem[TU99]{tilurb}
Jacques Tilouine and Eric Urban, \emph{Several-variable p-adic families of
  \text{S}iegel-\text{H}ilbert cusp eigensystems and their \text{G}alois
  representations}, Annales Scientifiques de l’{\'E}cole Normale
  Sup{\'e}rieure, vol.~32, Elsevier, 1999, pp.~499--574.

\bibitem[Urb11]{urbaneigen}
Eric Urban, \emph{Eigenvarieties for reductive groups}, Annals of mathematics
  (2011), 1685--1784.

\end{thebibliography}
\end{document}